\newtheorem{thm}{Theorem}[section]
\newtheorem{lem}[thm]{Lemma}
\theoremstyle{remark}
\newtheorem{claim}{Claim}
\title[Existence result for ($p$-$q$)-laplacian with vanishing potentials]
{Existence result for a class of quasilinear elliptic equations with ($\mathbf{p}$-$\mathbf{q}$)-laplacian and vanishing potentials}
\author[Alves]{M.\@ J.\@ Alves}
\address{M.\@ J.\@ Alves \hfill\break\indent
Col\'{e}gio T\'{e}cnico\,---\,
Universidade Federal de Minas Gerais, UFMG \hfill\break\indent
Av.~Ant\^{o}nio Carlos, 6627\,---\,
CEP 31270-901\,---\,
Belo Horizonte, MG, Brasil}
\email{mariajose@ufmg.br}
\author[Assun\c{c}\~{a}o]{R.\@ B.\@ Assun\c{c}\~{a}o}
\address{R.\@ B.\@ Assun\c{c}\~{a}o \hfill\break\indent
Departamento de Matem\'{a}tica\,---\,
Universidade Federal de Minas Gerais, 
UFMG \hfill\break\indent
Av.~Ant\^{o}nio Carlos, 6627\,---
%\,Caixa Postal 702\,---\,
CEP 30161-970\,---\,Belo Horizonte, MG, Brasil}
\email{ronaldo@mat.ufmg.br}
\author[Miyagaki]{O.\@ H.\@ Miyagaki}
\thanks{O.\@ H.\@ Miyagaki was partially supported by 
CNPq/Brasil and INCTMAT/Brasil.}
\address{O.\@ H.\@ Miyagaki \hfill\break\indent
Departamento de Matem\'{a}tica\,---\,
Universidade Federal de Juiz de Fora, 
UFJF \hfill\break\indent
Cidade Universit\'{a}ria\,---\,
CEP 36036-330\,---\,Juiz de Fora, MG, Brasil} 
\email{ohmiyagaki@gmail.com}
\date{Belo Horizonte, \today}
\keywords{%
Quasilinear elliptic equations, 
($p$-$q$)-Laplacian operator,
variational methods, 
vanishing potentials, 
penalization method, 
Moser iteration scheme.}
\subjclass[2010]{%
Primary:   %
35J20,     % Variational methods for second-order elliptic equations
35J92.     % Quasilinear elliptic equations with $p$-Laplacian
Secondary: %
35J10,     % Schrödinger operator [See also 35Pxx]
35B09,     % Positive solutions
35B38,     % Critical points
35B45.     % A priori estimates
}
\begin{document}

\begin{abstract}
The main purpose of this paper is to establish the 
existence of positive solutions
to a class of quasilinear elliptic equations involving 
the ($p$-$q$)-Laplacian operator. 
We consider a nonlinearity that can be subcritical at 
infinity and supercritical at the origin; 
we also consider potential functions that 
can vanish at infinity. 
The approach is based on variational arguments dealing 
with the mountain-pass lemma and an adaptation of the 
penalization method.
In order to overcome the lack of compactness we 
modify the original problem and the associated 
energy functional. 
Finally, to show that the solution of the modified 
problem is also a solution of the original 
problem we use an estimate obtained by the Moser 
iteration scheme.
\end{abstract}

\maketitle

\section{Introduction and main result}
\label{sec:intmain}
In this paper we consider a class of quasilinear 
elliptic equations involving the ($p$-$q$)-Laplacian 
operator of the form
\begin{align}
\label{eq:prob}
\begin{cases}
-\Delta\sb{p}u -\Delta\sb{q}u 
+ a(x) \left|u\right|\sp{p-2} u 
+ b(x) \left|u\right|\sp{q-2} u
= f(u), & \qquad x \in \mathbb{R}\sp{N};\\
\phantom{-}u(x) > 0, 
\quad u \in D\sp{1,p}(\mathbb{R}\sp{N})
\cap D\sp{1,q}(\mathbb{R}\sp{N}), 
& \qquad x \in \mathbb{R}\sp{N}.
\end{cases}
\end{align}
The \(m\)-laplacian operator \( \Delta\sb{m} u(x) \)
is defined by
\begin{align*}
\Delta\sb{m}u(x) \equiv 
\operatorname{div}(|\nabla u(x)|\sp{m-2}\nabla u(x)), 
\end{align*} 
for \( m \in \{ p, q \}\), where
\( 2 \leqslant q \leqslant p < N\).
The Sobolev space 
\( D\sp{1,m}(\mathbb{R}\sp{N}) \)
is defined by
\begin{align*}
D\sp{1,m}(\mathbb{R}\sp{N})
\equiv \big\{ u \in L\sp{m\sp{*}}(\mathbb{R}\sp{N}) 
\colon   
(\partial u/\partial x\sb{i})(x) 
\in L\sp{m}(\mathbb{R}\sp{N}),
\quad 1 \leqslant i \leqslant N \big\}, 
\end{align*}
and the critical Sobolev exponent is given by 
\(m\sp{*}\equiv Nm/(N-m)\), also for \( m \in \{ p,q \} \).

The nonlinearity \(f \colon \mathbb{R} \to \mathbb{R}\) 
is a continuous and nonnegative function that is not a 
pure power and can be subcritical at infinity and 
supercritical at the origin. More precisely, the 
following set of hypotheses on the nonlinearity 
\(f\) is used.

\begin{enumerate}[label=($f\sb{\arabic*}$), 
ref=$f\sb{\arabic*}$]
\item \( \limsup\sb{s \to 0\sp{+}} 
s f(s)/s\sp{p\sp{*}} < + \infty \).
\label{hyp:f1}
\item There exists \( \tau \in (p,p\sp{*}) \) such that 
\( \limsup\sb{s \to +\infty} s f(s)/s\sp{\tau} = 0 \).
\label{hyp:f2}
\item There exists \( \theta > p \) such that
\( 0 \leqslant \theta F(s) \leqslant s f(s) \)
for every \( s \in \mathbb{R}\sp{+} \), 
where we use the notation
\( F(s) \equiv \displaystyle 
\int\sb{0}\sp{s} f(t)\,\mathrm{d}t \). 
\label{hyp:f3}
\item \(f(t) = 0 \) for every \( t \leqslant 0 \). 
\label{hyp:f4}
\end{enumerate}

The following properties are easily seen: under 
hypothesis~\eqref{hyp:f1} there exists 
\( c\sb{1} \in \mathbb{R}\sp{+} \) such that
\( |sf(s)| \leqslant
c\sb{1} |s|\sp{p\sp{*}} \) for \( s \) close to zero; 
and under hypothesis~\eqref{hyp:f2}  
there exists \( c\sb{2}  \in \mathbb{R}\sp{+} \) 
such that
\( |sf(s)| \leqslant c\sb{2} |s|\sp{\tau} \) for
\( s \) large enough. 
Combining these results and defining
\( c\sb{0} \equiv \max \{ c\sb{1}, c\sb{2} \} \), 
we have the pair of inequalities
\begin{alignat}{2}
\label{eq:conshypf1f2}
|sf(s)| & \leqslant c\sb{0} |s|\sp{p\sp{*}}
\quad \text{and}
& \quad
|sf(s)| & \leqslant c\sb{0} |s|\sp{\tau}
\qquad (s \in \mathbb{R}).
\end{alignat}

Is is worth noticing that hypothesis~\eqref{hyp:f3} 
extends a well known condition which was first 
formulated by Ambrosetti and Rabinowitz~\cite{bib:ar}. 
It states a sufficient condition to ensure that the 
energy funcional, associated in a natural way to this 
type of problem, verifies the Palais-Smale condition.
Recall that a functional 
\( J \colon D\sp{1,m}(\mathbb{R}\sp{N}) 
\to \mathbb{R} \) 
is said to verify the Palais-Smale condition at the 
level \( c \) if any sequence 
\( (u\sb{n})\sb{n \in \mathbb{N}} 
\subset D\sp{1,m}(\mathbb{R}\sp{N})\) 
such that 
\( J(u\sb{n}) \to c \) and \( J'(u\sb{n}) \to 0 \), as 
\( n \to + \infty \), possess a convergent subsequence. 
Hypothesis~\eqref{hyp:f3} also allows us to study the 
asymptotic behavior of the solution to the problem.

As an example of a nonlinearity \( f \) verifying the 
above set of hypotheses, for \( \sigma > p\sp{*} \) 
and for \( \tau \in (p,p\sp{*})\)  given in 
hypothesis~\eqref{hyp:f2}, we define
\begin{align*}
f(t) & = 
\begin{cases}
t\sp{\sigma -1}, & 
\text{if $0 \leqslant t \leqslant 1$;} \\
t\sp{\tau -1}, & \text{if $1 \leqslant t$.}
\end{cases}
\end{align*}

We also assume that the functions 
\(a,b \colon \mathbb{R}\sp{N} \to \mathbb{R}\) 
are continuous and nonnegative.
Moreover, the following set of hypotheses on the 
potential functions 
\(a\) and \( b \) is used. 
\begin{enumerate}[label=($P\sb{\arabic*}$), 
ref=$P\sb{\arabic*}$]
\item  
\label{hyp:v1}
\(a \in L\sp{N/p}(\mathbb{R}\sp{N})\)
and  
\(b \in L\sp{N/q}(\mathbb{R}\sp{N})\).

\item  
\label{hyp:v2}
\( a(x) \leqslant a\sb{\infty} \) and 
\( b(x) \leqslant b\sb{\infty} \) for every 
\( x \in B\sb{1}(0) \), 
where 
\( a\sb{\infty}, b\sb{\infty} \in \mathbb{R}\sp{+} \) 
are positive constants and 
\( B\sb{1}(0) \) denotes the unitary ball centered at 
the origin. 

\item 
\label{hyp:v3}
There exist constants \( \Lambda \in \mathbb{R}\sp{+} \) 
and
\( R_0 > 1 \) such that
\begin{equation*}
\dfrac{1}{R_0\sp{p \sp{2}/(p-1)}} 
\inf\sb{|x| \geqslant R_0} 
|x|\sp{p\sp{2}/(p-1)} a(x) \geqslant \Lambda.
\end{equation*}  
\end{enumerate}

As an example of a potential function \( a \) verifying
this set of hypotheses, for 
\( \Lambda \in \mathbb{R}\sp{+} \) and \( R\sb{0} > 1 \)
given in hypothesis~\eqref{hyp:v3} we define
\begin{align*}
a(x) & = 
\begin{cases}
0, & \text{if $|x| \leqslant R\sb{0} - 1$;} \\
\Lambda R\sb{0}\sp{-p\sp{2}/(p-1)}(|x| - R\sb{0}+1),
& \text{if $R\sb{0} - 1 < |x| < R\sb{0}$;} \\
\Lambda |x|\sp{-p\sp{2}/(p-1)},
& \text{if $R\sb{0} \leqslant |x|$.} 
\end{cases}
\end{align*}
An example of a potential function \( b \) can be 
obtained in a similar way with minor modifications.

The ($p$-$q$)-Laplacian operator generalizes several 
types of problems. For example,
in the case \( 2 = q = p \) 
with \( a(x) = b(x) = V(x) \) and 
\( f(u) = 2g(u)\),
problem~\eqref{eq:prob} can be written in the form
\( -\Delta u + V(x)u = g(u) \),
which appears in the study of stationary solutions of 
Schr\"{o}dinger equation and has been extensively 
studied by several authors; 
and in the case \( 2 \leqslant q = p \)
with \( a(x) = b(x) = -V(x) \)
and \( f(u) = 0 \), 
problem~\eqref{eq:prob} assumes the form of the 
eigenvalue problem 
\( -\Delta\sb{p}u = V(x)\vert u \vert\sp{p-2}u \).

The interest in the study of this type of problem is 
twofold. On the one hand we have the physical 
motivations, since the quasilinear operator ($p$-$q$)-
Laplacian has been used to model steady-state solutions
of reaction-diffusion problems arising in biophysics, 
in plasma physics and 
in the study of chemical reactions. 
More precisely, the prototype for these models can
be written in the form
\begin{align*}
u\sb{t} & = -\operatorname{div}[D(u)\nabla u] + f(x,u),
\end{align*}
where 
\( D(u) = a\sb{p} |\nabla u|\sp{p-2} 
        + b\sb{q} |\nabla u|\sp{q-2} \) 
and \( a\sb{p}, b\sb{q} \in \mathbb{R}\sp{+}\) 
are positive constants.
In this framework, 
the function \( u \) generally stands 
for a concentration, 
the term \( \operatorname{div}[D(u)\nabla u] \) 
corresponds to the diffusion 
with coefficient \( D(u) \), 
and \( f(x,u) \) is the reaction term related 
to source and loss processes. 
See Cherfils and Il'yasov~\cite{bib:ci}, 
Figueiredo~\cite{bib:fig11,bib:fig13}, 
Benouhiba and Belyacine~\cite{bib:bb}, 
Mercuri and Squassina~\cite{bib:ms}, 
Wu and Yang~\cite{bib:wy}, 
Yin and Yang~\cite{bib:yy},
Chaves, Ercole and Miyagaki~\cite{bib:cembvp,bib:cemna},
and references therein for more details.
In addition, a model of 
elementary particle physics was studied by 
Benci, D'Avenia, Fortunato and Pisani~\cite{bib:bafp}
which yields an equation of the same class as that in 
problem~\eqref{eq:prob}.

On the other hand we have the 
purely mathematical interest in these type of problems, 
mainly regarding the existence of nonnegative nontrivial 
solutions  as well as multiplicity results. 
In what follows we present a very brief historical 
sketch to show some hypotheses on the nonlinearity that 
have been used by several authors in recent years as 
sufficient conditions to guarantee the existence of 
solutions.

We begin by considering the case
\( 2 \leqslant q = p < p\sp{*}\),  
which includes both the Laplacian operator with 
\( p = 2 \) 
or the \( p \)-Laplacian operator with \( p > 2 \); 
we also mention some 
papers dealing with bounded domains and others dealing 
with the entire space \( \mathbb{R}\sp{N} \).

Berestycki and Lions~\cite{bib:bl} considered 
a positive, constant potential function to show an 
existence result. 
Coti Zelati and Rabinowitz~\cite{bib:czr},
Pankov~\cite{bib:p},
Pankov and Pfl\"{u}ger~\cite{bib:pp},
and
Kryszewski and Szulkin~\cite{bib:ks}
considered periodic potential functions 
with a positive infimum.
Zhu and Yang~\cite{bib:yzamsi,bib:zyjpde} assumed that 
the potential is asymptotic to a positive constant.
Alves, Carri\~{a}o and Miyagaki~\cite{bib:coacm} 
studied a problem involving 
an asymptotically periodic potential. 
The case of a coercive potential was treated, 
among others, by 
Costa~\cite{bib:c} and Miyagaki~\cite{bib:m}.
For a weakened coercivity condition 
we refer the reader to 
Bartsch and Wang~\cite{bib:bw}.
The case of radially symmetric potentials were 
considered by 
Alves, de Morais Filho and Souto~\cite{bib:amfs} and 
Su, Wang and Willem~\cite{bib:sww}, 
where these authors established some embedding results 
of weighted Sobolev spaces to obtain ground state 
solutions.
Rabinowitz~\cite{bib:r} introduced a hypothesis where 
the limit inferior of the potential outside a bounded 
domain is strictly greater than its infimum on the whole 
space. Afterwards,
del Pino and Felmer~\cite{bib:dpf} 
weakened this condition by considering a situation where 
the minimum of the potential on the boundary of an open 
bounded set is strictly greater than its minimum on the 
closure of this set.
The case of sign-changing potentials related to singular 
perturbation problems were considered by 
Ding and Szulkin~\cite{bib:ds} and by 
Alves, Assun\c{c}\~{a}o, Carri\~{a}o and 
Miyagaki~\cite{bib:aacm}.

As we have seen, most of the papers cited assume that the potential is positive at infinity. 
However, the case where the potential can 
vanish at infinity was also studied, 
among others, by
Berestycki and Lions~\cite{bib:bl},
Yang and Zhu~\cite{bib:yzamsii},
Benci, Grisanti and Micheletti~\cite{bib:bgm}, 
Ambrosetti and Wang~\cite{bib:aw}, 
Ambrosetti, Felli and Malchiodi~\cite{bib:afm},
Alves and Souto~\cite{bib:as}, and 
Bastos, Miyagaki and Vieira~\cite{bib:bmv}.

In problem~\eqref{eq:prob} we consider the exponents
\( 2 \leqslant q \leqslant p < N \) 
and we allow the particular conditions
\( \liminf\sb{\vert x \vert \to +\infty} a(x) = 0\) and
\( \liminf\sb{\vert x \vert \to +\infty} b(x) = 0\),
called the zero mass cases.
These constitute the main features of our work. 

Our result reads as follows.

\begin{thm}
\label{thm:main}
Consider \( 2 \leqslant q \leqslant p < N \)
and suppose that the potential functions 
\( a \) and \( b \) verify the 
hypotheses~\eqref{hyp:v1},~\eqref{hyp:v2}
and~\eqref{hyp:v3}
and that the nonlinearity \( f \) verifies the 
hypotheses~\eqref{hyp:f1},~\eqref{hyp:f2},~\eqref{hyp:f3}, 
and~\eqref{hyp:f4}.
Then there exists a constant
\( \Lambda\sp{*} = \Lambda\sp{*}(a\sb{\infty}, 
b\sb{\infty},\theta, \tau, c\sb{0}) \) 
such that problem~\eqref{eq:prob} has a positive 
solution for every 
\( \Lambda \geqslant \Lambda\sp{*} \).
\end{thm}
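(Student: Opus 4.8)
The overall strategy is the modified-problem / penalization scheme suggested by the abstract, combined with a Moser iteration at the end. First I would set up the natural variational framework. Because the potentials may vanish at infinity, the bilinear forms $\int(|\nabla u|^p + a|u|^p)$ and $\int(|\nabla u|^q + b|u|^q)$ do not by themselves control the full $D^{1,p}\cap D^{1,q}$ norm in a way that yields compactness; hypotheses~\eqref{hyp:v1} give, via Hölder and Sobolev, that $a|u|^p\in L^1$ and $b|u|^q\in L^1$ with the embeddings $D^{1,p}\hookrightarrow L^{p^*}$, $D^{1,q}\hookrightarrow L^{q^*}$ continuous, so the energy functional
\begin{align*}
J(u) = \tfrac1p\!\int\!\big(|\nabla u|^p + a(x)|u|^p\big) + \tfrac1q\!\int\!\big(|\nabla u|^q + b(x)|u|^q\big) - \!\int\! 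F(u)
\end{align*}
is well defined and $C^1$ on $E \equiv D^{1,p}(\mathbb R^N)\cap D^{1,q}(\mathbb R^N)$ with the norm $\|u\|_E = \|\nabla u\|_p + \|\nabla u\|_q$. Using \eqref{hyp:f1}--\eqref{hyp:f2} in the form \eqref{eq:conshypf1f2} one checks the mountain-pass geometry: $J(u)\ge c\|u\|_E^p - C\|u\|_E^{p^*} - C\|u\|_E^{\tau}$ near $0$ gives a uniform positive barrier on a small sphere (here $p<\tau<p^*$ and $q\le p$ are exactly what make the lower-order terms negligible), and \eqref{hyp:f3} (the Ambrosetti--Rabinowitz condition with $\theta>p\ge q$) produces a ray along which $J\to-\infty$. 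This yields a mountain-pass value $c>0$ and a Palais--Smale sequence at level $c$.

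The central difficulty is the lack of compactness: a $(PS)_c$ sequence for $J$ need not converge because mass can escape to infinity where $a,b$ vanish. To handle this I would \emph{penalize} the nonlinearity outside a large ball. Fix $R\ge R_0$ and $k>\theta/(\theta-p)$, and replace $f$ by $\widetilde f(x,s)$ which equals $f(s)$ for $|x|\le R$ and equals $\min\{f(s),\,\tfrac{1}{k}(a(x)s^{p-1}+b(x)s^{q-1})\}$ (for $s>0$) for $|x|>R$, extended by $0$ for $s\le 0$; choose $R$ large so that $f(s)\le \tfrac1k(a(x)|s|^{p-2}s+\cdots)$ holds in the relevant range — this is where hypothesis~\eqref{hyp:v3}, the decay rate $|x|^{-p^2/(p-1)}$ of $a$ with constant $\Lambda$, enters, and it is precisely the size of $\Lambda$ (compared with $a_\infty,b_\infty,\theta,\tau,c_0$) that determines the threshold $\Lambda^*$. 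The modified functional $\widetilde J$ still has mountain-pass geometry, and now the penalized term absorbs a definite fraction of the potential energy outside $B_R$, so a $(PS)$ sequence is bounded in $E$ and the "loss at infinity'' is quantitatively controlled; a concentration-compactness / Lions-type argument (or a direct splitting using cutoffs at $|x|=R$) shows $\widetilde J$ satisfies $(PS)_c$ for $c$ below a level governed by $\Lambda$. Hence for $\Lambda\ge\Lambda^*$ the modified problem has a nontrivial critical point $u\in E$, and $u\ge0$ follows by testing with $u^-$ together with \eqref{hyp:f4}; a Harnack-type inequality or the strong maximum principle for the $(p,q)$-operator gives $u>0$.

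It remains to show this $u$ solves the \emph{original} problem, i.e. that the penalization is inactive: one needs $f(u(x))\le \tfrac1k(a(x)u^{p-1}+b(x)u^{q-1})$ for $|x|>R$, which by the structure of the penalized $f$ reduces to an $L^\infty$-decay estimate on $u$ away from the origin. Here I would run the Moser iteration scheme on the equation satisfied by $u$ outside $B_R$: using $u$ as an admissible test function iterated against powers $u^{\beta}$, the subcritical growth from \eqref{hyp:f2} and the $L^{N/p},L^{N/q}$ integrability of $a,b$ from \eqref{hyp:v1} bootstrap $u\in L^s$ for all $s$ and then give a pointwise bound $u(x)\le C\,\|u\|_{L^{p^*}(B_{|x|/2}(x))}\to 0$ as $|x|\to\infty$, with $C$ independent of $x$. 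Comparing this decay with the lower bound $\tfrac1k a(x)\ge \tfrac{\Lambda}{k}|x|^{-p^2/(p-1)}$ from~\eqref{hyp:v3}, and choosing $\Lambda^*$ so large that the inequality defining $\widetilde f$ is satisfied at every point of $\{|x|>R\}$, we conclude $\widetilde f(x,u)=f(u)$ everywhere, so $u$ is a positive solution of~\eqref{eq:prob}. The main obstacle is the interplay in this last step between the decay rate extracted from Moser iteration and the prescribed algebraic decay of $a$ in~\eqref{hyp:v3}: the exponent $p^2/(p-1)$ is chosen exactly to match what the iteration delivers for the $p$-Laplacian part, and making the constants fit is what forces $\Lambda\ge\Lambda^*$.
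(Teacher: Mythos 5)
Your overall scheme---penalize \(f\) outside a large ball, solve the modified problem by the mountain-pass theorem, obtain an \(L^\infty\) bound by Moser iteration, and then show the penalization is inactive for \(\Lambda\) large---is the same as the paper's. The genuine gap is in the step where you deactivate the penalization. To get \(f(u(x))\leqslant \frac{a(x)}{k}\,u(x)^{p-1}\) for \(|x|>R\) one combines \(f(s)\leqslant c_0 s^{p^*-1}\) with hypothesis~\eqref{hyp:v3}, and since \(a(x)\) is only bounded below by \(\Lambda R_0^{p^2/(p-1)}|x|^{-p^2/(p-1)}\), this requires the \emph{quantitative} algebraic decay \(u(x)\leqslant C\,|x|^{-(N-p)/(p-1)}\) (note \((N-p)(p^*-p)/(p-1)=p^2/(p-1)\), which is exactly how the exponent in~\eqref{hyp:v3} is matched). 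The Moser iteration you invoke delivers a uniform \(L^\infty\) bound and, at best, the qualitative statement \(u(x)\to 0\) as \(|x|\to\infty\) through the smallness of \(L^{p^*}\) tails; it gives no rate. With only \(u(x)\to 0\), no finite \(\Lambda^*\) can work, because the right-hand side \(\frac{\Lambda}{k}R_0^{p^2/(p-1)}|x|^{-p^2/(p-1)}\) also tends to zero; ``choosing \(\Lambda^*\) so large that the inequality holds at every point of \(\{|x|>R\}\)'' is therefore not justified by what your iteration provides. The paper fills precisely this hole with a separate comparison argument (Lemma~\ref{lem:inequ}): once Lemma~\ref{lem:estiminf} gives \(\|u\|_{L^\infty(\mathbb{R}^N)}\leqslant M_2\), one compares \(u\) with the explicit barrier \(v(x)=M_2\,R_0^{(N-p)/(p-1)}|x|^{-(N-p)/(p-1)}\), tests the equation with \((u-v)^+\), uses the penalized bound \(g(x,u)\leqslant\frac{a(x)}{k}u^{p-1}\) outside \(B_{R_0}(0)\) together with Simon's inequality, and concludes \(u\leqslant v\) for \(|x|\geqslant R_0\). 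That barrier step, absent from your plan, is what produces the rate.

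A second, related omission concerns uniformity of constants: for \(\Lambda^*\) to depend only on \((a_\infty,b_\infty,\theta,\tau,c_0)\), the \(L^\infty\) bound \(M_2\) must be independent of \(R\) and of \(\Lambda\). The paper arranges this by introducing the functional \(I_\infty\) with the frozen potentials \(a_\infty,b_\infty\) on the unit ball (hypothesis~\eqref{hyp:v2}), so that the mountain-pass level \(c\) of the penalized functional satisfies \(c\leqslant d\) with \(d\) independent of \(R\) and \(\Lambda\); together with the Ambrosetti--Rabinowitz condition this yields the uniform energy bound of Lemma~\ref{lem:estimsol}, which is what feeds the Moser scheme of Lemma~\ref{lem:asbk}. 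Your proposal never explains why the constant in your pointwise bound does not degrade as \(R\) or \(\Lambda\) varies. Finally, two smaller deviations: in the paper the Palais--Smale condition for the penalized functional holds with no restriction tied to \(\Lambda\) (the term \(\frac{a(x)}{k}|t|^{p-2}t\) is absorbed by the potential part of the norm for any \(\Lambda>0\)), so your claim that compactness holds only ``for \(c\) below a level governed by \(\Lambda\)'' misplaces where \(\Lambda\) enters; and hypothesis~\eqref{hyp:v3} plays no role in defining the penalization or in choosing \(R\)---it is used only once, at the very end, to verify that the penalization is inactive.
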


Usually, a solution to problem~\eqref{eq:prob} 
is obtained as a critical point of the corresponding 
energy functional defined in some appropriate Sobolev 
space. To do this one uses critical point theory, 
mainly of minimax type; 
see Mawhin and Willem~\cite{bib:mw}, 
Struwe~\cite{bib:str}, and 
Willem~\cite{bib:w}. 
A well known result concerning the existence of a 
nontrivial weak solution is that if the 
energy functional verifies the 
geometry of the mountain-pass lemma near the origin 
and also verifies the Palais-Smale condition, 
then problem~\eqref{eq:prob} has at least one solution. 
The main difficulty in proving the existence of solution 
to problem~\eqref{eq:prob} resides in the fact that the 
embedding of the Sobolev space 
\( D\sp{1,m}(\mathbb{R}\sp{N}) \) 
in the Lebesgue space 
\( L\sp{Nm/(N-m)}(\mathbb{R}\sp{N})\) 
is not compact due to the action of a
group of homoteties and translations. 
Besides, the Palais-Smale condition for the 
corresponding energy functional 
cannot be obtained directly. 
Adding to these difficulties, 
we have to consider the presence of both operators 
\(\Delta\sb{p}u\) and \(\Delta\sb{q} u\). 
When \( q < p \) the study of problem~\eqref{eq:prob}
does not allow the use of the Lagrange's multipliers 
method due to the lack of homogeneity; 
moreover, the first eigenvalue of the 
\(-\Delta\sb{p}u\) operator brings
no valuable information on the  
eigenvalue of the \(-\Delta\sb{q}u\) operator; 
finally, the method of sub- and super-solutions 
cannot be applied.
Therefore, to study problem~\eqref{eq:prob} we are
required to make a careful analysis of the energy level 
of the Palais-Smale sequences in order to obtain their 
boundedness and also 
to overcome the lack of compactness.
Furthermore, we have to adapt the Moser iteration scheme 
to our setting, since this is a crucial step to obtain 
an estimate for the solution.

Inspired mainly by 
Wu and Yang~\cite{bib:wy} 
regarding the ($p$-$q$)-Laplacian type operator, 
and by 
Alves and Souto~\cite{bib:as}, 
with respect to the set of hypotheses, 
we adapt the penalization method developed by 
del Pino and Felmer~\cite{bib:dpf} 
to show our existence result. 
The basic idea can be described in the following way. 
In section~\ref{sec:auxprob} we modify the original 
problem and study its corresponding energy functional, 
showing that it verifies the 
geometry of the mountain-pass lemma 
and that every Palais-Smale sequence is bounded 
in an appropriate Sobolev space. 
Using the standard theory 
this implies that the modified problem has a solution. 
In section~\ref{sec:estimsol} we show, 
using the Moser iteration scheme, 
that the solution of the auxiliary problem verifies 
an estimate involving the 
\( L\sp{\infty}(\mathbb{R}\sp{N}) \) norm.
Finally, in section~\ref{sec:obtainsol} 
we use this estimate 
to show that the solution of the modified problem 
is also a solution of the original 
problem~\eqref{eq:prob}.

\section{An auxiliary problem}
\label{sec:auxprob} 
In order to prove the existence of a positive solution 
to problem~\eqref{eq:prob} 
we establish a variational setting and apply the 
mountain-pass lemma.
Using hypothesis~\eqref{hyp:v1}
we define the space
\begin{equation*}
E \equiv \Big\{ u \in 
D\sb{a}\sp{1,p}(\mathbb{R}\sp{N}) \cap 
D\sb{b}\sp{1,q}(\mathbb{R}\sp{N})\colon
\int\sb{\mathbb{R}\sp{N}} a(x) |u|\sp{p}\,\mathrm{d}x 
< + \infty  \text{\, and\,}
\int\sb{\mathbb{R}\sp{N}} b(x) |u|\sp{q}\,\mathrm{d}x 
< + \infty \Big\},
\end{equation*}
which can be endowed with the norm
\(\left\|u\right\| =\left\|u\right\|_{1,p}  
+ \left\|u\right\|_{1,q}  \),  
where we denote
\begin{align*}
\| u \| _{1,p} 
& \equiv \Big(
\int\sb{\mathbb{R}\sp{N}}
|\nabla u|\sp{p}\,\mathrm{d}x + 
\int\sb{\mathbb{R}\sp{N}}
a(x) |u|\sp{p}\,\mathrm{d}x 
\Big)\sp{1/p} \\
\shortintertext{and}
\| u \| _{1,q}
& \equiv \Big(
\int\sb{\mathbb{R}\sp{N}}
|\nabla u|\sp{q}\,\mathrm{d}x 
+ \int\sb{\mathbb{R}\sp{N}}
b(x) |u|\sp{q}\,\mathrm{d}x 
\Big)\sp{1/q}.
\end{align*}

The Euler-Lagrange energy functional 
\( I \colon E \to \mathbb{R} \)
associated to problem~\eqref{eq:prob} is defined by
\begin{align*}
I(u) &\equiv 
 \dfrac{1}{p} \int\sb{\mathbb{R}\sp{N}} 
| \nabla u|\sp{p}\,\mathrm{d}x
+ \dfrac{1}{p}\int\sb{\mathbb{R}\sp{N}} 
a(x) |u|\sp{p}\,\mathrm{d}x \\
& \qquad +\dfrac{1}{q} \int\sb{\mathbb{R}\sp{N}} 
|\nabla u|\sp{q}\,\mathrm{d}x
+ \dfrac{1}{q}\int\sb{\mathbb{R}\sp{N}} 
b(x) |u|\sp{q}\,\mathrm{d}x
- \int\sb{\mathbb{R}\sp{N}} F(u) \,\mathrm{d}x.
\end{align*}
Using the hypotheses on the nonlinearity \( f \) 
we can deduce that 
\( I \in C\sp{1}(E; \mathbb{R}) \); 
moreover, for every
\( u, v \in E \)
its G\^{a}teaux derivative can be computed by
\begin{align*}
I'(u) v &= 
\int\sb{\mathbb{R}\sp{N}} 
|\nabla u|\sp{p-2} \nabla u \cdot \nabla v \,\mathrm{d}x
+ \int\sb{\mathbb{R}\sp{N}} 
a(x) |u|\sp{p-2} u v \,\mathrm{d}x\nonumber\\
& \qquad + \int\sb{\mathbb{R}\sp{N}} 
|\nabla u|\sp{q-2} \nabla u \cdot \nabla v \,\mathrm{d}x
+ \int\sb{\mathbb{R}\sp{N}} 
b(x) |u|\sp{q-2}u v \,\mathrm{d}x
- \int\sb{\mathbb{R}\sp{N}} f(u) v \,\mathrm{d}x.
\end{align*}

It is a well known fact that if \( u \) 
is a critical point of the energy functional \( I \), 
then \( u \) is a weak solution to 
problem~\eqref{eq:prob}. This means that
\begin{align*} 
{} & \int\sb{\mathbb{R}\sp{N}} 
|\nabla u|\sp{p-2} 
\nabla u \cdot \nabla \phi \,\mathrm{d}x
+ \int\sb{\mathbb{R}\sp{N}} 
a(x) |u|\sp{p-2}u\phi\,\mathrm{d}x \\
& \qquad + \int\sb{\mathbb{R}\sp{N}} 
|\nabla u|\sp{q-2} \nabla u \cdot 
\nabla \phi \,\mathrm{d}x
+ \int\sb{\mathbb{R}\sp{N}} 
b(x) |u|\sp{q-2}u\phi\,\mathrm{d}x
- \int\sb{\mathbb{R}\sp{N}} f(u) \phi\,\mathrm{d}x = 0
\end{align*}
for every \( v \in E \).

Now we define the energy functional
\( I\sb{\infty} \colon 
D\sb{0}\sp{1,p}(B\sb{1}(0))
\cap 
D\sb{0}\sp{1,q}(B\sb{1}(0)) \to \mathbb{R} \) 
by
\begin{align*}
 I\sb{\infty}(u) & \equiv 
\dfrac{1}{p} \int\sb{B\sb{1}(0)} 
| \nabla u|\sp{p}\,\mathrm{d}x
+ \dfrac{1}{p}\int\sb{B\sb{1}(0)} 
a_{\infty} |u|\sp{p}\,\mathrm{d}x\\
&\qquad + 
\dfrac{1}{q} \int\sb{B\sb{1}(0)} 
| \nabla u|\sp{q}\,\mathrm{d}x
+ \dfrac{1}{q}\int\sb{B\sb{1}(0)} 
b_{\infty} |u|\sp{q}\,\mathrm{d}x
- \int\sb{B\sb{1}(0)} F(u) \,\mathrm{d}x.
\end{align*}
Using the hypotheses~\eqref{hyp:v1} and~\eqref{hyp:v2} 
it can be shown that it is well defined.
Our first lemma concerns the geometry of this 
functional. 

\begin{lem}
\label{lem:I}
The functional \(I_{\infty}\) verifies the 
geometry of the mountain-pass lemma. 
More precisely, the following claims are valid.
\begin{enumerate}
\item \label{mpti1} 
There exist 
\(r_0, \; \mu _0 \in \mathbb{R}\sp{+}\) such that  
\(I_{\infty}(u)\geqslant \mu_0\) 
for \(\left\|u\right\|=r_0\).
\item \label{mpti2}
There exists 
\(e_0 \in  
\left[ D\sp{1,p}(B\sb{1}(0))
\cap 
D\sp{1,q}(B\sb{1}(0)) \right] 
\backslash \{ 0 \} \) 
such that  
\(\left\| e_0 \right\| \geqslant r_0\) 
and \(I_{\infty}(e_0)<0\).
\end{enumerate}
\end{lem}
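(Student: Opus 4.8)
The plan is to verify the two mountain-pass conditions one at a time, using only the Sobolev embeddings on the \emph{bounded} domain $B_1(0)$ (so the lack of compactness on $\mathbb{R}^N$ plays no role at this stage) together with the pointwise estimates on $F$ furnished by \eqref{eq:conshypf1f2} and \eqref{hyp:f3}. Throughout, for functions supported in $B_1(0)$ I write $\|\cdot\|_{1,p}$ and $\|\cdot\|_{1,q}$ for the norms naturally attached to $I_\infty$ (i.e.\ with $a_\infty$, $b_\infty$ and integration over $B_1(0)$), which by \eqref{hyp:v2} and Poincar\'e's inequality are equivalent to the gradient norms on $D_0^{1,p}(B_1(0))$ and $D_0^{1,q}(B_1(0))$.

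\emph{Condition \eqref{mpti1} (local minimum near the origin).} Since $f\ge 0$, the second inequality in \eqref{eq:conshypf1f2} gives $f(s)\le c_0 s^{\tau-1}$ for $s>0$, hence $F(s)\le (c_0/\tau)|s|^\tau$ for every $s\in\mathbb{R}$ (for $s\le 0$ use \eqref{hyp:f4}). As $\tau<p^*$, the embedding $D_0^{1,p}(B_1(0))\hookrightarrow L^\tau(B_1(0))$ is continuous, so there is $C>0$ with $\int_{B_1(0)}F(u)\,\mathrm{d}x\le C\|u\|_{1,p}^\tau$. Retaining the remaining (nonnegative) terms of $I_\infty$ one obtains
\begin{equation*}
I_\infty(u)\ge \tfrac{1}{p}\|u\|_{1,p}^p+\tfrac{1}{q}\|u\|_{1,q}^q-C\|u\|_{1,p}^\tau .
\end{equation*}
If $\|u\|=\|u\|_{1,p}+\|u\|_{1,q}=r_0$, then both summands are $\le r_0$, so $C\|u\|_{1,p}^\tau\le Cr_0^{\tau-p}\|u\|_{1,p}^p$; choosing $r_0$ so small that $Cr_0^{\tau-p}\le 1/(2p)$ gives $I_\infty(u)\ge \tfrac{1}{2p}\|u\|_{1,p}^p+\tfrac{1}{q}\|u\|_{1,q}^q$. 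Since at least one of $\|u\|_{1,p}$, $\|u\|_{1,q}$ is $\ge r_0/2$, this is bounded below by $\mu_0:=\min\{(2p)^{-1}(r_0/2)^p,\ q^{-1}(r_0/2)^q\}>0$. The only mildly delicate point is that the nonlinear term is controlled by the $p$-part of the norm alone, which is what forces the short case distinction; everything else is routine.

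\emph{Condition \eqref{mpti2} (a ray along which $I_\infty$ is negative).} Fix once and for all $\varphi\in C_c^\infty(B_1(0))$ with $\varphi\ge 0$, $\varphi\not\equiv 0$; then $\varphi\in D_0^{1,p}(B_1(0))\cap D_0^{1,q}(B_1(0))$. Since $f$ is continuous, nonnegative and not identically zero, $F(s_0)>0$ for some $s_0>0$; integrating the inequality $\theta F(s)\le sf(s)$ of \eqref{hyp:f3} over $[s_0,s]$ then yields the standard superhomogeneous bound $F(s)\ge C_1 s^\theta-C_2$ for all $s\ge 0$, with $C_1,C_2>0$. Consequently
\begin{equation*}
I_\infty(t\varphi)\le \tfrac{t^p}{p}\,A+\tfrac{t^q}{q}\,B-C_1 t^\theta\!\int_{B_1(0)}\!\varphi^\theta\,\mathrm{d}x+C_2\,|B_1(0)|,
\end{equation*}
where $A$ and $B$ are the (finite, positive) $p$- and $q$-energies of $\varphi$. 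Because $\theta>p\ge q$ and $\int_{B_1(0)}\varphi^\theta\,\mathrm{d}x>0$, the right-hand side tends to $-\infty$ as $t\to+\infty$; picking $t_0$ large enough that both $I_\infty(t_0\varphi)<0$ and $t_0\|\varphi\|\ge r_0$, and setting $e_0:=t_0\varphi$, completes the argument. I do not expect a genuine obstacle in this part; the single hypothesis that must not be overlooked is the nontriviality of $f$, since otherwise $F\equiv 0$ and $I_\infty$ would stay nonnegative along every ray.
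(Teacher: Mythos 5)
Your proof is correct and takes essentially the same route as the paper: item \eqref{mpti1} via the subcritical growth bound on \(F\) and the Sobolev embedding on the bounded ball (which the paper dismisses as ``standard''), and item \eqref{mpti2} via the Ambrosetti--Rabinowitz lower bound on \(F\) and scaling along a fixed direction. If anything, your version is slightly more careful than the paper's, since the bound \(F(s)\geqslant C_{1}s^{\theta}-C_{2}\) (rather than \(F(s)\geqslant C_{0}|s|^{\theta}\) for all \(s\), which can fail near \(s=0\) for nonlinearities supercritical at the origin) is the correct consequence of~\eqref{hyp:f3}, and you rightly flag the need for \(f\not\equiv 0\).
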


\begin{proof}
By using the hypotheses~\eqref{hyp:f1},~\eqref{hyp:f2}, 
and~\eqref{hyp:f3} it is standard to verify 
item~\eqref{mpti1}. 

By hypothesis~\eqref{hyp:f3} it follows that there exist 
$\theta > p$ and $C_{0} \in \mathbb{R}\sp{+}$
such that 
$F(s)\geqslant C_{0} \left|s\right|\sp{\theta}$.
Now, if 
\( u \in 
\left[ D\sb{0}\sp{1,p}(B\sb{1}(0))
\cap 
D\sb{0}\sp{1,q}(B\sb{1}(0)) \right] 
\backslash \{ 0 \} \),
then 
\begin{align*}
I\sb{\infty} (tu)& 
\leqslant 
\dfrac{1}{p} \left|t\right|\sp{p}\int\sb{B\sb{1}(0)} 
|\nabla u|\sp{p}\,\mathrm{d}x
+ \dfrac{a_{\infty}}{p}\left|t\right| \sp{p}
\int\sb{B\sb{1}(0)} |u|\sp{p}\,\mathrm{d}x\\
& \qquad + \dfrac{1}{q}\left|t\right|\sp{q}
\int\sb{B\sb{1}(0)} | \nabla u|\sp{q}\,\mathrm{d}x
+ \dfrac{b_{\infty}}{q}\left|t\right|\sp{q}
\int\sb{B\sb{1}(0)} |u|\sp{q}\,\mathrm{d}x
- C_{0}\left|t\right|\sp{\theta} 
\int\sb{B\sb{1}(0)}   
\left|u\right|\sp{\theta} \,\mathrm{d}x.
\end{align*} 
Using this inequality we deduce that 
there exist $t_{u} \in \mathbb{R}\sp{+}$ 
large enough such that, 
taking $e_0=t_{u} u$,  we have 
\(\left\|e_0\right\|\geqslant r_{0}\) and 
\(I\sb{\infty} (e_0) < 0\). 
This concludes the proof of item~\eqref{mpti2}.
\end{proof}

We denote by $d$ the mountain-pass level associated to 
the functional \( I\sb{\infty} \), that is, 
\begin{equation*}
d \equiv \inf\sb{\gamma \in \Gamma} \,
\max\sb{t \in [0,1]} I\sb{\infty}(\gamma(t)),
\end{equation*}
where
\begin{equation*}
\Gamma \equiv
\left\{ \gamma \in C([0,1]; 
D\sp{1,p}(B\sb{1}(0)) \cap 
D\sp{1,q}(B\sb{1}(0)))\colon
\gamma(0)=0 \text{ and } \gamma(1) = e_0 \right\}
\end{equation*}
and the function
\( e_0 
\in [D\sp{1,p}(B\sb{1}(0))\cap 
     D\sp{1,q}(B\sb{1}(0))] 
\backslash \{ 0 \} \) 
is given in Lemma~\ref{lem:I}.
It is standard to verify that the mountain-pass level 
\( d \) depends only 
on \( a\sb{\infty} \),  
on \( b\sb{\infty} \), 
on \( \theta \), and 
on the function \( f \).

For \( R >1 \) and for 
\( \theta > p \) given in hypothesis~\eqref{hyp:f3},
we set \( k \equiv \theta p/(\theta - p) > p \)
and we define a new nonlinearity 
\( g \colon \mathbb{R}\sp{N}\times \mathbb{R} 
\to \mathbb{R}\) by
\begin{equation*}
g(x,t) \equiv
\begin{cases}
f(t), & \text{if $|x| \leqslant R$ 
or if $|x| > R$ 
and 
$f(t) \leqslant \dfrac{a(x)}{k} \, |t|\sp{p-2}t$}; \\
\dfrac{a(x)}{k}\,|t|\sp{p-2}t,
& \text{if $|x| > R$ 
and $f(t) > \dfrac{a(x)}{k}\, |t|\sp{p-2}t$}.
\end{cases}
\end{equation*}
Using the notation
\( G(x,t) \equiv \displaystyle
\int\sb{0}\sp{t} g(x,s)\,\mathrm{d}s \), 
by direct computations we get the set of inequalities
\begin{alignat}{2}
g(x,t) & \leqslant \dfrac{a(x)}{k}\,|t|\sp{p-2}t,
&& \quad \text{for all $|x| \geqslant R$}; 
\label{eq:gxtf} \\
G(x,t) & = F(t),
&&\quad \text{if $|x| \leqslant R$}; 
\label{eq:capgf} \\
G(x,t) & \leqslant
\dfrac{a(x)}{kp} \, |t|\sp{p-1}t,
&& \quad \text{if $|x| > R>1$} 
\label{eq:capga}.
\end{alignat}

Now we define the auxiliary problem
\begin{equation}
\label{eq:auxprob}
\begin{cases}
-\Delta\sb{p}u -\Delta\sb{q}u 
+ a(x) \left|u\right|\sp{p-2} u 
+ b(x) \left|u\right|\sp{q-2} u 
= g(x,u), & \qquad x \in \mathbb{R}\sp{N};\\
\phantom{-}u(x) > 0, 
\quad u \in 
D\sp{1,p}(\mathbb{R}\sp{N})\cap 
D\sp{1,q}(\mathbb{R}\sp{N}), 
& \qquad x \in \mathbb{R}\sp{N}.
\end{cases}
\end{equation}
The Euler-Lagrange energy functional 
\( J \colon E \to \mathbb{R} \)
associated to the auxiliary problem~\eqref{eq:auxprob}
is given by
\begin{align*}
J(u) & \equiv 
\dfrac{1}{p}\int\sb{\mathbb{R}\sp{N}} 
|\nabla u|\sp{p}\,\mathrm{d}x
+ \dfrac{1}{p}\int\sb{\mathbb{R}\sp{N}} 
a(x)|u|\sp{p}\,\mathrm{d}x\\
& \qquad + 
\dfrac{1}{q}\int\sb{\mathbb{R}\sp{N}} 
|\nabla u|\sp{q}\,\mathrm{d}x
+ \dfrac{1}{q}\int\sb{\mathbb{R}\sp{N}} 
b(x)|u|\sp{q}\,\mathrm{d}x
- \int\sb{\mathbb{R}\sp{N}} G(x,u)\,\mathrm{d}x.
\end{align*}

Using the hypotheses on the nonlinearity
\( f \) and on the potential functions 
\( a \) and \( b \) we can show that 
\( J \in C\sp{1}(E; \mathbb{R}) \);
moreover, for every
\( u, v \in E \) 
its G\^{a}teaux derivative can be computed by
\begin{align*}
J'(u)v &= 
\int\sb{\mathbb{R}\sp{N}} 
|\nabla u|\sp{p-2} \nabla u 
\cdot 
\nabla v \,\mathrm{d}x
+ \int\sb{\mathbb{R}\sp{N}} 
a(x) |u|\sp{p-2}uv\,\mathrm{d}x \\
&\qquad + \int\sb{\mathbb{R}\sp{N}} 
|\nabla u|\sp{q-2} \nabla u 
\cdot 
\nabla v \,\mathrm{d}x
+ \int\sb{\mathbb{R}\sp{N}} 
b(x) |u|\sp{q-2}uv\,\mathrm{d}x
- \int\sb{\mathbb{R}\sp{N}} g(x,u) v\,\mathrm{d}x.
\end{align*}
As before, critical points of the energy functional 
\( J \) are weak solutions to 
problem~\eqref{eq:auxprob}. 

Our next goal is to apply the mountain-pass lemma to 
show that problem~\eqref{eq:auxprob} has a positive 
solution.

\begin{lem}
\label{lem:J}
The functional $J$  verifies the 
geometry of the mountain-pass lemma.
More precisely, the following claims are valid.
\begin{enumerate}
\item \label{mptj1}
There exist $r_1, \; \mu _1 \in \mathbb{R}\sp{+}$ 
such that  
$J(u)\geqslant \mu_1$ for $\left\|u\right\|=r_1$.
\item \label{mptj2}
There exists  
\(e_{1} \in 
\left[ D\sp{1,p}(B\sb{1}(0)) \cap 
 D\sp{1,q}(B\sb{1}(0)) \right] 
 \backslash \{ 0 \} \) 
such that  \(\left\|e_{1}\right\| \geqslant r_{1}\) 
and \(J(e_{1})<0\).
\end{enumerate}
\end{lem}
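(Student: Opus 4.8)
The plan is to mimic the proof of Lemma~\ref{lem:I}, exploiting the fact that the modified nonlinearity $g$ differs from $f$ only in a way that makes the functional \emph{larger}, so the mountain-pass geometry is inherited.

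For item~\eqref{mptj1}, I would first establish a pointwise bound on $G(x,t)$ analogous to~\eqref{eq:conshypf1f2}. Combining~\eqref{eq:capgf} (where $G = F$, and $F$ is controlled by $|t|^{p^*}$ and $|t|^{\tau}$ via~\eqref{eq:conshypf1f2}) with~\eqref{eq:gxtf}--\eqref{eq:capga} (where $G(x,t) \leqslant \frac{a(x)}{kp}|t|^p$), one gets, for a suitable constant, a bound of the form $G(x,t) \leqslant \frac{1}{kp}\, a(x)|t|^p + c_0\bigl(|t|^{p^*} + |t|^{\tau}\bigr)$ valid for all $x$ and $t$. Integrating and using the Sobolev embeddings $D^{1,p}(\mathbb{R}^N)\hookrightarrow L^{p^*}(\mathbb{R}^N)$ and $D^{1,p}(\mathbb{R}^N)\hookrightarrow L^{\tau}(\mathbb{R}^N)$ (the latter via interpolation, since $\tau \in (p,p^*)$, together with the $\|\cdot\|_{1,q}$ control on lower integrability), the term $\int a(x)|u|^p$ is absorbed into $\frac{1}{p}\|u\|_{1,p}^p$ with a strictly smaller coefficient because $k > p$ forces $\frac{1}{kp} < \frac{1}{p^2} < \frac{1}{p}$. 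Hence
\begin{align*}
J(u) \;\geqslant\; \Bigl(\tfrac{1}{p} - \tfrac{1}{k}\Bigr)\|u\|_{1,p}^p + \tfrac{1}{q}\|u\|_{1,q}^q - C\bigl(\|u\|^{p^*} + \|u\|^{\tau}\bigr),
\end{align*}
and since $p^* > p > \dots$ hmm, more precisely since $p, q \leqslant p < \min\{p^*,\tau\}$... wait, $\tau > p \geqslant q$, so all exponents on the right of the negative term strictly exceed $\min\{p,q\} = q$; for $\|u\|$ small the leading positive contribution dominates and one obtains $J(u) \geqslant \mu_1 > 0$ on the sphere $\|u\| = r_1$ for $r_1$ sufficiently small.

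For item~\eqref{mptj2}, I would restrict attention to functions supported in $B_1(0)$, where by~\eqref{eq:capgf} we have $G(x,u) = F(u)$ and moreover $a(x) \leqslant a_\infty$, $b(x) \leqslant b_\infty$ by~\eqref{hyp:v2}. Thus for $u \in \bigl[D_0^{1,p}(B_1(0)) \cap D_0^{1,q}(B_1(0))\bigr]\setminus\{0\}$ we get $J(u) \leqslant I_\infty(u)$, so that $J(tu) \leqslant I_\infty(tu)$ for every $t > 0$. The estimate already carried out in the proof of Lemma~\ref{lem:I} — using $F(s) \geqslant C_0 |s|^\theta$ from~\eqref{hyp:f3} and the fact that $\theta > p \geqslant q$ makes the $-C_0 t^\theta \int |u|^\theta$ term dominate as $t \to +\infty$ — shows $I_\infty(t_u u) < 0$ for $t_u$ large, and enlarging $t_u$ if necessary also gives $\|t_u u\| \geqslant r_1$. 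Setting $e_1 = t_u u$ finishes the argument, and by construction $e_1$ lies in $\bigl[D^{1,p}(B_1(0)) \cap D^{1,q}(B_1(0))\bigr]\setminus\{0\}$.

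The only genuinely delicate point is item~\eqref{mptj1}: one must check that the coefficient $\frac{1}{k}$ appearing when $\int a(x)|u|^p$ is absorbed is indeed strictly less than $\frac{1}{p}$ (which is exactly why $k = \theta p/(\theta-p) > p$ was chosen), and that the remaining super-$q$-homogeneous terms coming from $F$ can be handled uniformly by the embeddings; the zero-mass setting causes no trouble here because $\tau < p^*$ keeps us strictly below the critical exponent. Everything else is a routine transcription of Lemma~\ref{lem:I}.
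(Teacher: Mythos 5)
Your overall strategy is the same as the paper's: for item (1) you split according to \eqref{eq:capgf}--\eqref{eq:capga}, absorb the penalized contribution $\int_{|x|>R} a(x)|u|^{p}/(kp)\,\mathrm{d}x$ into $\tfrac1p\|u\|_{1,p}^{p}$ using $k>p$, and control the remaining $F$-part through Sobolev's inequality; for item (2) you observe that $J(u)\leqslant I_{\infty}(u)$ for $u$ supported in $B_{1}(0)$ (where $G(x,u)=F(u)$ and $a\leqslant a_{\infty}$, $b\leqslant b_{\infty}$) and then reuse the fibering argument of Lemma~\ref{lem:I}; this is exactly the paper's proof.

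There is, however, one step in your item (1) that would fail as written. You keep both $|t|^{p^{*}}$ and $|t|^{\tau}$ in your pointwise bound for $G$ and then claim that $\int_{\mathbb{R}^{N}}|u|^{\tau}\,\mathrm{d}x$ is controlled by powers of $\|u\|$ ``via interpolation, together with the $\|\cdot\|_{1,q}$ control on lower integrability''. In this zero-mass setting the space $E$ only controls $\|u\|_{L^{p^{*}}(\mathbb{R}^{N})}$ and $\|u\|_{L^{q^{*}}(\mathbb{R}^{N})}$ (plus weighted $L^{p}$, $L^{q}$ norms whose weights may vanish on large sets), so interpolation only reaches exponents in $[q^{*},p^{*}]$; since $\tau\in(p,p^{*})$ can lie strictly below $q^{*}$ (for instance when $q=p$, where $q^{*}=p^{*}$), the embedding $E\hookrightarrow L^{\tau}(\mathbb{R}^{N})$ you invoke is simply not available --- the absence of such subcritical embeddings is precisely why the penalization is introduced. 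The gap is harmless only because the $\tau$-term is superfluous: by \eqref{hyp:f3} and the first inequality in \eqref{eq:conshypf1f2} one has $F(s)\leqslant \tfrac{c_{0}}{\theta}|s|^{p^{*}}$ for every $s$, so only the critical Sobolev inequality \eqref{eq:Sob} is needed; this is exactly how the paper argues. With that correction (and noting that the absorbed coefficient is in fact $\tfrac{1}{kp}$, giving $\tfrac1p-\tfrac1{kp}>0$, which is even better than your $\tfrac1p-\tfrac1k$), your item (1) goes through, and your item (2) is correct as written.
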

\begin{proof}
Using the equality~\eqref{eq:capgf} 
and the inequality~\eqref{eq:capga} together with the 
hypotheses~\eqref{hyp:f1} and~\eqref{hyp:f3} and the 
first inequality in~\eqref{eq:conshypf1f2}, we obtain 
\begin{align*}
J(u) & \geqslant 
\dfrac{1}{p}\left\|u\right\|_{1,p}\sp{p}+ \dfrac{1}{q}\left\|u\right\|_{1,q}\sp{q} 
- \int\sb{|x| \leqslant R} 
F(u)\,\mathrm{d}x 
- \int\sb{|x| > R} 
\dfrac{a(x)\left|u\right|\sp{p}}{kp}\,\mathrm{d}x\\
& \geqslant 
\dfrac{1}{p}\left\|u\right\|_{1,p}\sp{p}
+ \dfrac{1}{q}\left\|u\right\|_{1,q}\sp{q}
- \dfrac{c_{0}}{\theta}\int\sb{\mathbb{R}\sp{N}} 
\left|u\right|\sp{p\sp{*}}\,\mathrm{d}x
- \dfrac{1}{kp}\left\|u\right\|_{1,p}\sp{p}\\
& =
\left(\dfrac{1}{p}- \dfrac{1}{kp}\right)
\left\|u\right\|_{1,p}\sp{p} 
+ \dfrac{1}{q}\left\|u\right\|_{1,q}\sp{q}
- \dfrac{c_{0}}{\theta} \left|u\right|_{L\sp{p\sp{*}}}\sp{p\sp{*}}.
\end{align*}

Now we apply the Sobolev inequality
\begin{equation}
\label{eq:Sob}
\| u \|_{L\sp{m\sp{*}} (\mathbb{R}\sp{N})}\sp{m}
\leqslant S\sb{m}
\int\sb{\mathbb{R}\sp{N}} 
\left|\nabla u\right|\sp{m}\,\mathrm{d}x 
\quad \textrm{for all\,}
u \in D\sp{1,m}(\mathbb{R}\sp{N}) 
\qquad (m \in \{ p,q \})
\end{equation}
in the computations above and set 
\( S \equiv \max \{ S\sb{p}, S\sb{q} \} \) to get
\begin{align*}
J(u)&\geqslant 
\left(\dfrac{1}{p}- \dfrac{1}{kp}\right)
\left\|u\right\|_{1,p}\sp{p} 
+ \dfrac{1}{q}\left\|u\right\|_{1,q}\sp{q}
- \dfrac{c_{0}}{\theta} S\sp{p\sp{*}/p}
\left(\int\sb{\mathbb{R}\sp{N}} 
\left|\nabla u\right|\sp{p}\,\mathrm{d}x 
\right)\sp{p\sp{*}/p}\\
&\geqslant
\min\left\{\dfrac{1}{p}- \dfrac{1}{kp}, \dfrac{1}{q}\right\}\left(\left\|u\right\|_{1,p}\sp{p}+ \left\|u\right\|_{1,q}\sp{q}\right)
- \dfrac{c_{0}}{\theta} S\sp{p\sp{*}/p}
\left(\left\|u\right\|_{1,p}\sp{p}+ \left\|u\right\|_{1,q}\sp{q} \right)\sp{p\sp{*}/p}.
\end{align*}
If we take $\left\|u\right\|_{1,p}$  and 
$\left\|u\right\|_{1,q}$ small enough, 
it follows that  
$\left\|u\right\|_{1,p}\sp{p}$ and 
$\left\|u\right\|_{1,q}\sp{q}$ are also small enough. 
For that reason, we obtain
the existence of $r_{1}$, 
$\mu_{1} \in \mathbb{R}\sp{+}$ such that 
$J(u)\geqslant \mu_{1}$ for 
\( \| u \| = r\sb{1}\). 
This concludes the proof of item~\eqref{mptj1}. 

By  definition we have  that $G(x,u)=F(u)$ for  all 
\( u \in 
\left[ D\sp{1,p}(B\sb{1}(0)) \cap 
D\sp{1,q}(B\sb{1}(0)) \right] 
\backslash \{ 0 \} \).  
Arguing as in the proof of Lemma~\ref{lem:I} 
we conclude  that there exist  
$r_{1}, t\sb{u} \in \mathbb{R}\sp{+}$ such that 
$e_{1}\equiv t_{u}u$ verify the inequalities 
\( \left\| e_{1} \right\| \leqslant r\sb{1} \)
and 
$J(e_{1})<0$. 
This concludes the proof of item~\eqref{mptj2}. 
The lemma is proved.
\end{proof}

Since the functional \( J \) has the 
geometry of the mountain-pass lemma, 
using Willem~\cite[Theorem~1.15]{bib:w} we obtain a 
Palais-Smale sequence 
\( (u\sb{n})\sb{n\in \mathbb{N}} \subset E \) 
such that
\( J(u\sb{n}) \to c \)
and \( J'(u\sb{n}) \to 0\)
as \( n \to +\infty \). 
Here \( c \in \mathbb{R}\sp{+} \) 
is the mountain-pass level associated to the 
energy functional \( J \), that is,
\begin{equation*}
c \equiv \inf\sb{\gamma \in \Gamma} \,
\max\sb{t \in [0,1]} J(\gamma(t)),
\end{equation*}
where
\begin{equation*}
\Gamma \equiv
\left\{\gamma \in C([0,1]; 
D\sp{1,p}(B\sb{1}(0)) \cap 
D\sp{1,q}(B\sb{1}(0)) \colon
\gamma(0)=0 \text{ and } \gamma(1) = e_{1} \right\}
\end{equation*}
and 
\( e_{1} \in 
\left[ D\sp{1,p}(B\sb{1}(0)) 
\cap D\sp{1,q}(B\sb{1}(0)) \right]
\backslash \{ 0 \} \) 
is the same function verifying inequality 
\( J(e_{1}) < 0 \) in Lema~\ref{lem:J}.
Using the hypothesis~\eqref{hyp:f4}, 
without loss of generality 
we can suppose that the sequence
\( (u\sb{n})\sb{n \in \mathbb{N}} \subset E \)
consists of nonnegative functions.

We note that for all 
\( u \in \left[D\sp{1,p}(B\sb{1}(0))
    \cap  D\sp{1,q}(B\sb{1}(0))\right] 
    \backslash \{ 0 \} \) 
the inequality
\( J(u) \leqslant I\sb{\infty}(u) \)
is valid, and this implies that 
\begin{equation} \label{c:d}
c \leqslant d .
\end{equation}

Now we prove the boundedness of the Palais-Smale 
sequences for the functional \( J\).

\begin{lem}
\label{lem:psbounded}
Suppose that the potential functions 
\( a \), \( b \) verify the hypothesis~\eqref{hyp:v1}, 
and that the nonlinearity \( f \) verifies the  
hypotheses~\eqref{hyp:f1},~\eqref{hyp:f2},~\eqref{hyp:f3}, 
and~\eqref{hyp:f4}.  
If
\( (u\sb{n})\sb{n \in \mathbb{N}} \subset E \)
is a Palais-Smale sequence for the energy functional 
\( J \), then the sequence
\( (u\sb{n})\sb{n \in \mathbb{N}} \subset E \)
is bounded in \( E \).
\end{lem}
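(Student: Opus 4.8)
The plan is to run the classical Ambrosetti--Rabinowitz boundedness argument, adapted to the presence of the two operators and of the penalized nonlinearity $g$. As in the sequence produced after Lemma~\ref{lem:J} (and, for a Palais--Smale sequence of arbitrary sign, after a standard truncation argument testing $J'(u_n)$ against $-u_n^-$, which shows the negative parts stay bounded), I may assume $u_n \geqslant 0$. Set $\varepsilon_n \equiv \|J'(u_n)\|_{E'}$, so that $\varepsilon_n \to 0$ and $|J'(u_n)u_n| \leqslant \varepsilon_n \|u_n\|$, and take $n$ large enough that $J(u_n) \leqslant c + 1$.

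First I would form the combination $J(u_n) - \tfrac{1}{\theta} J'(u_n) u_n$, which, by the formulas for $J$ and $J'$ and the definition of the norms $\|\cdot\|_{1,p}$, $\|\cdot\|_{1,q}$, equals
\begin{equation*}
\Big( \tfrac{1}{p} - \tfrac{1}{\theta} \Big) \| u_n \|_{1,p}^p
+ \Big( \tfrac{1}{q} - \tfrac{1}{\theta} \Big) \| u_n \|_{1,q}^q
- \int_{\mathbb{R}^N} \Big( G(x,u_n) - \tfrac{1}{\theta}\, g(x,u_n)\, u_n \Big)\,\mathrm{d}x .
\end{equation*}
The crux is to bound the last integral from above, and here I would split $\mathbb{R}^N$ into $\{|x| \leqslant R\}$ and $\{|x| > R\}$. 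On $\{|x| \leqslant R\}$ one has $G(x,u_n) = F(u_n)$ and $g(x,u_n) = f(u_n)$ by \eqref{eq:capgf} and the definition of $g$, so hypothesis~\eqref{hyp:f3} yields $G(x,u_n) - \tfrac{1}{\theta} g(x,u_n) u_n = F(u_n) - \tfrac{1}{\theta} f(u_n) u_n \leqslant 0$; this region thus contributes nonpositively and can be dropped. On $\{|x| > R\}$ I would invoke \eqref{eq:capga} to get $G(x,u_n) \leqslant \tfrac{a(x)}{kp}\,|u_n|^p$ and use $g(x,u_n)u_n \geqslant 0$ (since $g(x,\cdot) \geqslant 0$ on $[0,\infty)$) to discard the term $-\tfrac{1}{\theta} g(x,u_n) u_n$, obtaining
\begin{equation*}
\int_{\{|x|>R\}} \Big( G(x,u_n) - \tfrac{1}{\theta}\, g(x,u_n)\, u_n \Big)\,\mathrm{d}x
\leqslant \frac{1}{kp} \int_{\mathbb{R}^N} a(x)\, |u_n|^p\,\mathrm{d}x
\leqslant \frac{1}{kp}\, \| u_n \|_{1,p}^p .
\end{equation*}

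Next I would use that $k = \theta p/(\theta - p)$, i.e. $\tfrac{1}{k} = \tfrac{1}{p} - \tfrac{1}{\theta}$, so that after subtracting $\tfrac{1}{kp}\| u_n \|_{1,p}^p$ the coefficient of $\| u_n \|_{1,p}^p$ becomes $\big(\tfrac{1}{p} - \tfrac{1}{\theta}\big)\big(1 - \tfrac{1}{p}\big) = \tfrac{p-1}{p}\big(\tfrac{1}{p} - \tfrac{1}{\theta}\big)$, which is strictly positive because $p \geqslant 2$ and $\theta > p$; the coefficient $\tfrac{1}{q} - \tfrac{1}{\theta}$ of $\| u_n \|_{1,q}^q$ is also strictly positive because $\theta > p \geqslant q$. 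Hence there exist constants $C_1, C_2 > 0$ with
\begin{equation*}
J(u_n) - \tfrac{1}{\theta} J'(u_n) u_n
\geqslant C_1 \| u_n \|_{1,p}^p + C_2 \| u_n \|_{1,q}^q .
\end{equation*}
On the other hand, this quantity is at most $c + 1 + \tfrac{1}{\theta}\varepsilon_n\big(\| u_n \|_{1,p} + \| u_n \|_{1,q}\big)$. If $(u_n)$ were unbounded in $E$, then along a subsequence $\| u_n \|_{1,p} + \| u_n \|_{1,q} \to +\infty$, hence at least one of the two norms tends to $+\infty$; since $p, q \geqslant 2$, the lower bound above grows at least quadratically in that norm while the upper bound grows at most linearly, a contradiction. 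Therefore $(u_n)$ is bounded in $E$.

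The step I expect to be the main obstacle is the region $\{|x| > R\}$: there hypothesis~\eqref{hyp:f3} no longer controls the (penalized) nonlinearity, and one must rely precisely on the penalization inequalities \eqref{eq:gxtf}--\eqref{eq:capga} and on the tailored value $k = \theta p/(\theta-p)$, so that absorbing the loss $\tfrac{1}{kp}\| u_n \|_{1,p}^p$ still leaves a strictly positive coefficient in front of $\| u_n \|_{1,p}^p$. The remaining ingredients --- the elementary estimate $\theta F \leqslant sf$ on the complementary region, the nonnegativity of $g$, and the final growth comparison using $p,q \geqslant 2$ --- are routine.
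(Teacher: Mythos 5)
Your proof is correct, and at its core it is the same Ambrosetti--Rabinowitz/penalization argument the paper uses: the combination \( J(u_n)-\tfrac{1}{\theta}J'(u_n)u_n \), hypothesis~\eqref{hyp:f3} on the region where \( g=f \), the bound~\eqref{eq:capga} where the penalization is active, and the identity \( 1/k=1/p-1/\theta \) to absorb the loss \( \tfrac{1}{kp}\|u_n\|_{1,p}^p \). The execution differs in a worthwhile way: you use a single test combination and split the integral over \( \{|x|\leqslant R\} \) and \( \{|x|>R\} \), whereas the paper runs two separate claims---first bounding \( \|u_n\|_{1,q}^q \) from \( J(u_n)-\tfrac{1}{p}J'(u_n)u_n \) (whose coefficient \( \tfrac{1}{q}-\tfrac{1}{p} \) degenerates when \( q=p \)), then \( \|u_n\|_{1,p}^p \) from \( J-\tfrac{1}{\theta}J' \)---and organizes its estimates around the alternatives in the definition of \( g \) as if each alternative held globally, rather than decomposing the domain of integration. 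Your spatial decomposition is the cleaner (and more accurate) way to carry out the same idea, and it treats \( q=p \) uniformly. Two small remarks: the reduction to \( u_n\geqslant 0 \) is not actually needed, and ``negative parts bounded'' would not by itself justify replacing \( u_n \) by \( u_n^+ \), since \( u_n^+ \) need not be a Palais--Smale sequence; but by~\eqref{hyp:f4} one has \( F(s)-\tfrac{1}{\theta}f(s)s\leqslant 0 \) for every real \( s \), and \( g(x,s)s\geqslant 0 \) together with \( G(x,s)\leqslant \tfrac{a(x)}{kp}|s|^{p} \) for \( |x|>R \) also hold for every real \( s \), so your estimate runs verbatim for sign-changing \( u_n \). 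Finally, in the concluding comparison, superlinear growth (\( p,q>1 \)) already yields the contradiction; quadratic growth is not needed.
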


\begin{proof}   
To obtain our thesis it is sufficient 
to prove that both sequences
\( (\left\| u_n \right\|\sb{1,q}\sp{q})
\sb{n \in \mathbb{N}} 
\subset \mathbb{R} \)
and
\( (\left\| u_n \right\|\sb{1,p}\sp{p})
\sb{n \in \mathbb{N}} 
\subset \mathbb{R} \)
are bounded, which we do in the two claims below. 

Before that, however, we remark that 
there  exist constants \(c_1 > 0 \) and 
\(n_0 \in \mathbb{N}\) such that 
\( J(u\sb{n}) \leqslant c_1 \) and 
\( \left| J'(u\sb{n}u\sb{n})\right|
\leqslant 
\min \big\{ \left\|u\sb{n}\right\|\sb{1,q},  
\left\|u\sb{n}\right\|\sb{1,p} \big\} \) 
for all \( n \in \mathbb{N} \) such that 
$n \geqslant n_0$; 
and since $\theta>p>1$, for all $n \geqslant n_0$ 
we have 
\begin{equation} \label{J:sup}
J(u\sb{n})  
- \frac{1}{\theta}J'(u\sb{n})u\sb{n} 
\leqslant c_1  
+ \frac{1}{\theta } 
\left\|u\sb{n}\right\|  
\leqslant c_1  + 
\min \big\{ \left\|u\sb{n}\right\|\sb{1,q}, 
\left\|u\sb{n}\right\|\sb{1,p} \big\}.
\end{equation}

\begin{claim}
\label{hyp:ff1}
The sequence 
\( (\left\| u_n \right\|\sb{1,q}\sp{q})
\sb{n \in \mathbb{N}} 
\subset \mathbb{R} \) is bounded.
\end{claim}
\begin{proof}[Proof of Claim~\ref{hyp:ff1}]
We divide our analysis into cases that mirror the 
definition of the nolinearity \( g \).
If
\( |x| > R \) and 
\( f(t) > a(x)|t|\sp{p-2}t/k\), 
then 
\begin{align*}
\int\sb{\mathbb{R}\sp{N}} G(x,u_n)\,\mathrm{d}x 
& = \frac{1}{p}\int\sb{\mathbb{R}\sp{N}} 
g(x,u_n)u_n\,\mathrm{d}x,
\end{align*}
and this implies that 
\begin{align}
J(u\sb{n})  - \frac{1}{p}J'(u\sb{n})u\sb{n} 
& = 
\left(\dfrac{1}{q}-\dfrac{1}{p}\right)
\left\| u_n \right\|\sb{1, q}\sp{q}.
\label{eq:designnormq}
\end{align}
Combining inequalities~\eqref{J:sup} 
and~\eqref{eq:designnormq} we conclude that
\begin{align*} 
\left(\dfrac{1}{q}-\dfrac{1}{p}\right)
\left\| u_n\right\|\sb{1, q}\sp{q}
& \leqslant  c\sb{1} 
+ \left\| u_n\right\|\sb{1, q}.
\end{align*}
So, in this case the sequence 
\( (\left\| u_n\right\|\sb{1, q}\sp{q} )
\sb{n \in \mathbb{N}}
\subset \mathbb{R}\) 
is bounded, 
say \( \left\| u_n\right\|\sb{1, q}\sp{q}
\leqslant c\sb{q} \) for every \( n \in \mathbb{N}\).

If \(|x| \leqslant R\)
or if
\( |x| > R \) and  
\( f(t) \leqslant a(x)|t|\sp{p-2}t/k \),
the boundedness of the sequence can be proved using the 
same ideas as that of the previous case with some minor 
changes.
This concludes the proof of the claim.
\end{proof}

\begin{claim}
\label{hyp:ff2}
The sequence 
\( (\left\| u_n\right\|\sb{1,p}\sp{p})
\sb{n \in \mathbb{N}} 
\subset \mathbb{R} \) is bounded.
\end{claim}
\begin{proof}[Proof of Claim~\ref{hyp:ff2}]
We also divide our analysis into the same cases.
If \( |x| > R \) and 
\( f(t) > a(x)|t|\sp{p-2}t/k \), 
then we have
\begin{align}
J(u\sb{n})  - \frac{1}{\theta}J'(u\sb{n})u\sb{n} 
&\geqslant \left(\dfrac{1}{p}-\dfrac{1}{\theta}\right)
\left\|u_n\right\|_{1,p}\sp{p}
+\left( \dfrac{1}{q} -\dfrac{1}{\theta}\right)
\left\|u_n\right\|_{1,q}\sp{q}
- \dfrac{1}{kp} 
\left\{\int\sb{\mathbb{R}\sp{N}} 
a(x)|u_n|\sp{p}\,\mathrm{d}x \right\} \nonumber \\
& \geqslant \left(\dfrac{1}{p}
- \dfrac{1}{\theta}\right)\left\|u_n\right\|_{1,p}\sp{p} 
+ \left(\dfrac{1}{p}
- \dfrac{1}{\theta}\right)\left\|u_n\right\|_{1,p}\sp{q} 
- \dfrac{1}{kp} 
\left\{\left\|u_n\right\|_{1,p}\sp{p} 
+ \left\|u_n\right\|_{1,q}\sp{q} \right\} \nonumber \\
& = \frac{(p-1)}{kp}
\left\{\left\|u_n\right\|_{1,p}\sp{p} 
+ \left\|u_n\right\|_{1,q}\sp{q} \right\}.
\label{J:inf}
\end{align} 
Combining inequalities~\eqref{J:sup} 
and~\eqref{J:inf} and using
Claim~\ref{hyp:ff1} we obtain
\begin{align*}
\frac{(p-1)}{kp}\left\|u_n\right\|_{1,p}\sp{p}  
& \leqslant c_1 + \left\|u_n\right\|_{1,p}.
\end{align*}
This means that in this case 
the sequence 
\( (\left\|u_n\right\|_{1,p}\sp{p})
\sb{n \in \mathbb{N}} \subset \mathbb{R} \) 
is bounded. 

If \(|x| \leqslant R\) or if 
\( |x| > R \) and  
\( f(t) \leqslant a(x)|t|\sp{p-2}t/k \), 
then 
\begin{align*}
\int\sb{\mathbb{R}\sp{N}} G(x,u_n)\,\mathrm{d}x 
+ \dfrac{1}{\theta}\int\sb{\mathbb{R}\sp{N}} 
g(x,u_n)u_n\,\mathrm{d}x \geqslant 0.
\end{align*}
Hence,
\begin{align}
{} & J(u\sb{n}) - \frac{1}{\theta}J'(u\sb{n})u\sb{n}  
\nonumber \\
& \qquad \geqslant 
\left(\dfrac{1}{p}-\dfrac{1}{\theta}\right)
\left\|u_n\right\|_{1,p}\sp{p} 
+ \left( \dfrac{1}{q} -\dfrac{1}{\theta}\right)
\left\|u_n\right\|_{1,q}\sp{q}
- \int\sb{\mathbb{R}\sp{N}} 
G(x,u_n)\,\mathrm{d}x 
+ \frac{1}{\theta}\int\sb{\mathbb{R}\sp{N}} 
g(x,u_n)u_n\,\mathrm{d}x \nonumber \\
& \qquad \geqslant 
\left(\dfrac{1}{p}-\dfrac{1}{\theta} \right)
\left\{\left\|u_n\right\|_{1,p}\sp{p} 
+\left\|u_n\right\|_{1,q}\sp{q} \right\}
- \int\sb{\mathbb{R}\sp{N}} G(x,u_n)\,\mathrm{d}x 
+ \frac{1}{\theta}\int\sb{\mathbb{R}\sp{N}} 
g(x,u_n)u_n\,\mathrm{d}x \nonumber \\
& \qquad 
\geqslant \dfrac{1}{k}
\left\{\left\|u_n\right\|_{1,p}\sp{p} 
+ \left\|u_n\right\|_{1,q}\sp{q} \right\}
\geqslant \dfrac{(p-1)}{kp}
\left\{\left\|u_n\right\|_{1,p}\sp{p} 
+ \left\|u_n\right\|_{1,q}\sp{q} \right\}.
\label{J:infk}
\end{align}
Combining inequalities~\eqref{J:sup} 
and~\eqref{J:infk} we get
\begin{align*}
\frac{1}{k}\left\|u_n\right\|_{1,p}\sp{p}  
& \leqslant c_1 +\left\|u_n\right\|_{1,p}.
\end{align*}
This means that also in this case 
the sequence
\( (\left\|u_n\right\|_{1,p}\sp{p})
\sb{n \in \mathbb{N}} \subset \mathbb{R} \) 
is bounded. 
This concludes the proof of the claim.
\end{proof}
Using Claims~\ref{hyp:ff1} and~\ref{hyp:ff2} we deduce 
the proof of the lemma.
\end{proof}

The following result shows that the functional \( J \) 
verifies the Palais-Smale condition.

\begin{lem}
\label{lem:funcjps}
Suppose that the potential functions \( a \), \( b \) 
verify the hypotheses~\eqref{hyp:v1},~\eqref{hyp:v2}, 
and~\eqref{hyp:v3} 
and that the nonlinearity  \( f \) verifies the 
hypotheses~\eqref{hyp:f1},~\eqref{hyp:f2},~\eqref{hyp:f3}, 
and~\eqref{hyp:f4}.
Then the Palais-Smale condition is valid for the 
energy functional \( J \).
\end{lem}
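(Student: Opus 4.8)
The plan is to establish the Palais-Smale condition by the standard two-stage argument: first extract a weakly convergent subsequence from a given Palais-Smale sequence, then upgrade to strong convergence using the local compactness coming from hypothesis~\eqref{hyp:v1} together with the penalized structure of \( g \) outside the ball of radius \( R \). More precisely, let \( (u\sb{n})\sb{n \in \mathbb{N}} \subset E \) satisfy \( J(u\sb{n}) \to c \) and \( J'(u\sb{n}) \to 0 \). By Lemma~\ref{lem:psbounded} the sequence is bounded in \( E \), hence (up to a subsequence) there exists \( u \in E \) with \( u\sb{n} \rightharpoonup u \) weakly in \( D\sp{1,p}(\mathbb{R}\sp{N}) \cap D\sp{1,q}(\mathbb{R}\sp{N}) \), \( u\sb{n} \to u \) strongly in \( L\sp{s}\sb{\mathrm{loc}}(\mathbb{R}\sp{N}) \) for \( s < p\sp{*} \), and \( u\sb{n}(x) \to u(x) \) a.e. Since the sequence may be taken nonnegative, so is \( u \).

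\emph{First I would} handle the tails. The point of the penalization is that for \( |x| > R \) the inequality~\eqref{eq:gxtf}, namely \( g(x,t) \leqslant a(x)|t|\sp{p-2}t/k \), lets one absorb the nonlinear term into the coercive part of the norm; concretely, for \( \varepsilon > 0 \) one chooses \( R \) large and uses the integrability \( a \in L\sp{N/p} \), \( b \in L\sp{N/q} \) from~\eqref{hyp:v1} (via Hölder against the uniformly bounded \( L\sp{p\sp{*}} \), \( L\sp{q\sp{*}} \) norms) to show that the contribution of the region \( \{|x| > R\} \) to \( J'(u\sb{n})(u\sb{n}-u) \) and to \( J'(u)(u\sb{n}-u) \) is uniformly small. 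On the ball \( \{|x| \leqslant R\} \), the nonlinearity is simply \( f(u) \), and the growth bounds~\eqref{eq:conshypf1f2} combined with the Rellich-type local compact embedding give \( \int\sb{|x| \leqslant R} f(u\sb{n})(u\sb{n}-u)\,\mathrm{d}x \to 0 \); here hypothesis~\eqref{hyp:f1} forces care because of the \( p\sp{*} \) growth at the origin, but since \( u\sb{n} \to u \) in \( L\sp{s}\sb{\mathrm{loc}} \) for every \( s < p\sp{*} \) and \( f(u\sb{n}) \) is bounded in \( L\sp{p\sp{*}/(p\sp{*}-1)}\sb{\mathrm{loc}} \), a truncation/Vitali argument still yields convergence.

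\emph{Then} the core of the proof is the standard \( (S\sb{+}) \)-type argument for the \( (p,q) \)-operator. From \( J'(u\sb{n})(u\sb{n}-u) \to 0 \), \( J'(u)(u\sb{n}-u) \to 0 \) (weak convergence), and the estimates just described one deduces
\[
\int\sb{\mathbb{R}\sp{N}} \big( |\nabla u\sb{n}|\sp{p-2}\nabla u\sb{n} - |\nabla u|\sp{p-2}\nabla u \big)\cdot \nabla(u\sb{n}-u)\,\mathrm{d}x
+ \int\sb{\mathbb{R}\sp{N}} \big( |\nabla u\sb{n}|\sp{q-2}\nabla u\sb{n} - |\nabla u|\sp{q-2}\nabla u \big)\cdot \nabla(u\sb{n}-u)\,\mathrm{d}x \to 0,
\]
together with the analogous statements for the weighted lower-order terms \( \int a(x)(|u\sb{n}|\sp{p-2}u\sb{n}-|u|\sp{p-2}u)(u\sb{n}-u) \) and its \( q \)-counterpart. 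Each of these integrands is nonnegative by the elementary monotonicity inequality for the map \( \xi \mapsto |\xi|\sp{m-2}\xi \), so every summand tends to zero separately; the simple inequality \( (|\xi|\sp{m-2}\xi - |\eta|\sp{m-2}\eta)\cdot(\xi-\eta) \geqslant c\sb{m}|\xi-\eta|\sp{m} \) for \( m \geqslant 2 \) then gives \( \nabla u\sb{n} \to \nabla u \) in \( L\sp{p} \) and in \( L\sp{q} \), and the weighted terms give convergence of \( \int a|u\sb{n}-u|\sp{p} \) and \( \int b|u\sb{n}-u|\sp{q} \). Hence \( \|u\sb{n}-u\| \to 0 \), which is the Palais-Smale condition.

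\emph{The hard part will be} the uniform control of the ``mass at infinity'': ensuring that no part of the \( L\sp{p\sp{*}} \) (or \( L\sp{q\sp{*}} \)) mass of \( u\sb{n} \) escapes to spatial infinity in a way that the penalized term cannot absorb. This is exactly where hypotheses~\eqref{hyp:v1}–\eqref{hyp:v3} and the particular choice of the cut-off constant \( k = \theta p/(\theta-p) \) enter: one must verify that the factor \( 1/k \) in~\eqref{eq:gxtf} is small enough (for \( \Lambda \) large, via~\eqref{hyp:v3}) that \( (1/p - 1/(kp)) \) and the corresponding coercivity constants stay positive and dominate the escaping nonlinear contribution, so that the tail estimate above is genuinely uniform in \( n \). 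I would isolate this as the key estimate and carry it out first, then let the local-compactness and monotonicity arguments run as above.
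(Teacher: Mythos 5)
Your overall strategy is sound, and in its two key steps it is genuinely different from the paper's proof. For the nonlinear term you kill the tail by H\"{o}lder: using \( g(x,t)\leqslant a(x)|t|^{p-2}t/k \) from~\eqref{eq:gxtf} together with \( a\in L^{N/p}(\mathbb{R}^N) \) from~\eqref{hyp:v1}, one gets \( \int_{|x|>r} g(x,u_n)|u_n-u|\,\mathrm{d}x \leqslant \frac{1}{k}\,\|a\|_{L^{N/p}(\{|x|>r\})}\,\|u_n\|_{L^{p^*}}^{p-1}\|u_n-u\|_{L^{p^*}} \), which is uniformly small in \( n \) once \( r \) is large, by absolute continuity of the integral of \( a^{N/p} \). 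The paper instead tests \( J'(u_n) \) against \( \eta u_n \) with a cut-off supported outside \( B_r \), absorbs the penalized term thanks to \( 1-1/k>0 \), and controls the annulus terms via H\"{o}lder and the compact embedding on \( \overline{B}_{2r}\setminus B_r \); that route proves the stronger fact that the full weighted-norm tails of \( u_n \) are uniformly small, but your lighter estimate suffices for this lemma. For the conclusion you run the \( (S_+) \)/monotonicity argument: each of the four monotone brackets in \( (J'(u_n)-J'(u))(u_n-u) \) is nonnegative, hence tends to zero once the nonlinear term does, and Simon's inequality (valid since \( 2\leqslant q\leqslant p \); it is exactly~\eqref{eq:gte}, used by the paper later in Lemma~\ref{lem:inequ}) gives \( \|u_n-u\|_{1,p}\to 0 \) and \( \|u_n-u\|_{1,q}\to 0 \) directly. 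The paper instead deduces convergence of the norms \( \|u_n\|_{1,p}\to\|u\|_{1,p} \), \( \|u_n\|_{1,q}\to\|u\|_{1,q} \) and then invokes weak convergence plus uniform convexity (DiBenedetto); your ending is more direct and avoids that external ingredient.

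Three corrections, none fatal. First, \( R \) is fixed by the definition of \( g \) and cannot be ``chosen large''; the large radius must be a new parameter \( r\geqslant R \) (the paper takes \( r>R>1 \)). Consequently, on the bounded region \( \{|x|\leqslant r\} \) the nonlinearity is not just \( f \): on the annulus \( R<|x|\leqslant r \) it may equal \( a(x)|t|^{p-2}t/k \). This is harmless because \( a \) is continuous, hence bounded on \( \overline{B}_r \), and \( u_n\to u \) in \( L^p(B_r) \); likewise, since \( |tf(t)|\leqslant c_0|t|^{\tau} \) with \( \tau<p^* \) by~\eqref{eq:conshypf1f2}, no truncation/Vitali argument is needed there\,---\,H\"{o}lder with exponent \( \tau \) and the Rellich embedding suffice. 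Second, only the \( g \)-part of the tail contribution to \( J'(u_n)(u_n-u) \) is small; the gradient and potential terms on \( \{|x|>r\} \) are not small, and they are precisely the monotone terms you keep, so phrase the tail claim for the nonlinear term only. Third, the positivity of \( 1-1/k \) (equivalently of \( 1/p-1/(kp) \)) has nothing to do with \( \Lambda \) or hypothesis~\eqref{hyp:v3}: it is automatic from \( k=\theta p/(\theta-p)>p>1 \); the largeness \( \Lambda\geqslant\Lambda^* \) enters only in Section~\ref{sec:obtainsol}, when passing from the auxiliary problem back to~\eqref{eq:prob}.
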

\begin{proof}
Let  $(u_n)\sb{n\in N}\subset E $  be a 
Palais-Smale  sequence at the level $c$; 
this means that 
\begin{align*}
J(u_n)\rightarrow c\mbox{ and }
J'(u_n)\rightarrow 0
\end{align*}
as $n\rightarrow\infty$.
By  Lema~\ref{lem:psbounded} this sequence is bounded. 
Then there exist a subsequence of 
$(u_n)\sb{n\in N}\subset E $,  which we still denote in 
the same way, and there exists a function \( u \in E\) 
such that 
\(u_n \rightharpoonup u\) weakly in \( E \) 
as \( n \to +\infty\).

For each $\epsilon >0$, there exist  $r>R>1$ such that
\begin{align}
\label{d:eps}
2(2\sp{N}-1)\sp{1/N} 
\omega\sp{\frac{1}{N}}_N 
\left(1-\frac{1}{k}\right)\sp{-1} 
& 
\left\{ \splitfrac{\left( 
\displaystyle\int\sb{r\leqslant \left|x\right|
\leqslant 2r}\left|u\right|\sp{p\sp{*}} 
\mathrm{d}x\right)\sp{1/p\sp{*}}
\left\|u\right\|\sp{p-1} }
{ + \left( \displaystyle\int\sb{r\leqslant 
\left| x \right| \leqslant 2r}
\left|u\right|\sp{q\sp{*}} 
\mathrm{d}x\right)\sp{1/q\sp{*}}
\left\|u\right\|\sp{q-1} }
\right\} 
< \epsilon.
\end{align}

Let  
\( \eta = \eta_r \in C\sp{\infty}(B\sb{r}\sp{c}(0)) \) 
be a cut off function such that 
\(0\leqslant\eta\leqslant 1\),
with
$\eta =1$  in  
$B\sb{2r}\sp{c}(0)$ 
and also 
$\left|\nabla \eta\right|\leqslant 2/r$
for all $x\in\mathbb{R}\sp{N}$. 
Since the sequence 
$(u_n)\sb{n\in N}\subset E $ 
is bounded, it follows that the sequence 
$(\eta u_n)\sb{n\in N}\subset E $ is bounded also. 
Therefore, $J'(u_n)(\eta u_n)=o_n(1)$, that is, 
\begin{align}
\label{g:g}
& \int \sb{\mathbb{R}\sp{N}} 
|\nabla u_n|\sp{p-2} \nabla u_n \cdot 
\nabla (\eta u_n)\,\mathrm{d}x
+ \int\sb{\mathbb{R}\sp{N}} 
a(x) |u_n|\sp{p-2}u_n(\eta u_n)\,\mathrm{d}x\nonumber \\
& \qquad + \int\sb{\mathbb{R}\sp{N}} 
|\nabla u_n|\sp{q-2} \nabla u_n \cdot 
\nabla (\eta u_n) \,\mathrm{d}x
+ \int\sb{\mathbb{R}\sp{N}} 
b(x) |u|_n\sp{q-2}u_n(\eta u_n)\,\mathrm{d}x \nonumber\\
& \qquad \qquad = \int\sb{\mathbb{R}\sp{N}} 
g(x,u_n) (\eta u_n)\,\mathrm{d}x + o(1).
\end{align}
The previous expression and the properties of the cut off function $\eta$ 
imply that
\begin{align*}
{} & \int \sb{\left|x\right|\geqslant r} 
\eta |\nabla u_n|\sp{p}\,\mathrm{d}x 
+\int \sb{\left|x\right|\geqslant r} 
|\nabla u_n|\sp{p-2} u_n \nabla u_n \cdot 
\nabla \eta \,\mathrm{d}x
+ \int\sb{\left|x\right|\geqslant r} 
\eta a(x) |u_n|\sp{p}\,\mathrm{d}x \\
& \qquad 
+ \int \sb{\left|x\right|\geqslant r} 
\eta |\nabla u_n|\sp{q}\,\mathrm{d}x 
+\int \sb{\left|x\right|\geqslant r}
|\nabla u_n|\sp{q-2} u_n \nabla u_n \cdot 
\nabla \eta \,\mathrm{d}x
+ \int\sb{\left|x\right|\geqslant r} 
\eta b(x) |u_n|\sp{q}\,\mathrm{d}x \\
& \qquad \qquad 
= \int\sb{\left|x\right|\geqslant r}
\eta g(x,u_n)  u_n\,\mathrm{d}x + o(1).
\end{align*}
By the inequality~\eqref{eq:gxtf},
it follows that 
\begin{align*}
\int\sb{\left|x\right|\geqslant r}
\eta g(x,u_n)  u_n\,\mathrm{d}x 
& \leqslant
\int\sb{\left|x\right|\geqslant r} 
\eta \dfrac{a(x)}{k}\,|u_n|\sp{p} \, \mathrm{d}x;
\end{align*}
thus, we obtain
\begin{align*}
{} & \int \sb{\left|x\right|\geqslant r} 
\eta |\nabla u_n|\sp{p}\,\mathrm{d}x 
+ \int\sb{\left|x\right|\geqslant r} 
\eta a(x) |u_n|\sp{p}\,\mathrm{d}x \\
& \qquad + \int \sb{\left|x\right|\geqslant r}
\eta |\nabla u_n|\sp{q}\,\mathrm{d}x 
+ \int\sb{\left|x\right|\geqslant r} 
\eta b(x) |u_n|\sp{q}\,\mathrm{d}x
- \int\sb{\left|x\right|\geqslant r} 
\eta \dfrac{a(x)}{k}\,|u_n|\sp{p} \\
& \qquad \qquad \leqslant 
\int \sb{\left|x\right|\geqslant r} 
|\nabla u_n|\sp{p-1} |u_n| |\nabla \eta| \,\mathrm{d}x 
+ \int \sb{\left|x\right|\geqslant r}
|\nabla u_n|\sp{q-1} |u_n|
|\nabla \eta| \,\mathrm{d}x +o(1) \\
& \qquad \qquad \leqslant \dfrac{2}{r} 
\left\{\int \sb{r\leqslant\left|x\right|\leqslant 2r}
|\nabla u_n|\sp{p-1}|u_n| \,\mathrm{d}x 
+ \int \sb{r\leqslant\left|x\right|\leqslant 2r}
|\nabla u_n|\sp{q-1} |u_n|
\,\mathrm{d}x\right\} + o(1).
\end{align*}
Subtracting the terms  
\begin{align*} 
\dfrac{1}{k} 
\int \sb{\left|x\right|\geqslant r} 
\eta |\nabla u_n|\sp{p}\,\mathrm{d}x  
+ \dfrac{1}{k} 
\int \sb{\left|x\right|\geqslant r}
\eta |\nabla u_n|\sp{q}\,\mathrm{d}x 
+ \dfrac{1}{k}\int\sb{\left|x\right|\geqslant r} 
\eta b(x) |u_n|\sp{q}\,\mathrm{d}x
\end{align*}
from the left-hand side of the previous inequality 
and grouping the several integrals, we deduce that 
\begin{align*}
{} & \left(1-\dfrac{1}{k}\right)
\left\{ \splitfrac{\displaystyle
\int \sb{\left|x\right|\geqslant r}
\eta |\nabla u_n|\sp{p}\,\mathrm{d}x 
+ \displaystyle\int\sb{\left|x\right|\geqslant r} 
\eta a(x) |u_n|\sp{p}\,\mathrm{d}x }
{+ \displaystyle\int \sb{\left|x\right|\geqslant r}
\eta |\nabla u_n|\sp{q}\,\mathrm{d}x 
+ \displaystyle\int\sb{\left|x\right|\geqslant r} 
\eta b(x) |u_n|\sp{q}\,\mathrm{d}x }
\right\} \\
& \qquad \qquad \leqslant \dfrac{2}{r} 
\left\{ \int \sb{r\leqslant
\left|x\right|\leqslant 2r}
\left|u_n\right| |\nabla u_n|\sp{p-1} \,\mathrm{d}x 
+ \int \sb{r\leqslant\left|x\right|\leqslant 2r}
\left|u_n\right| |\nabla u_n|\sp{q-1}
\,\mathrm{d}x 
\right\} +o(1).
\end{align*}

Now we use H\"{o}lder's inequality to get
\begin{align*}
\int \sb{r\leqslant\left|x\right|\leqslant 2r}
\left|u_n\right| |\nabla u_n|\sp{p-1} \,\mathrm{d}x 
& \leqslant 
\left(\int \sb{r\leqslant\left|x\right|\leqslant 2r}
|u_n|\sp{p} \,\mathrm{d}x\right)\sp{1/p}
\left\{\left(
\int \sb{r\leqslant\left|x\right|\leqslant 2r}
|\nabla u_n|\sp{p} 
\,\mathrm{d}x \right)
\sp{1/p}\right\}\sp{p-1}\\
& \leqslant 
\left(\int \sb{r\leqslant\left|x\right|\leqslant 2r}
|u_n|\sp{p} \,\mathrm{d}x\right)\sp{1/p}
\left\|u_n\right\|\sp{p-1}.
\end{align*}
And in a similar way, we obtain
\begin{align*}
\int \sb{r\leqslant\left|x\right|\leqslant 2r}
\left|u_n\right| |\nabla u_n|\sp{q-1} \,\mathrm{d}x 
& \leqslant 
\left(\int \sb{r\leqslant \left|x\right|\leqslant 2r}
|u_n|\sp{q} \,\mathrm{d}x\right)\sp{\frac{1}{q}}
\left\|u_n\right\|\sp{q-1}.
\end{align*}
By the compactness of the embedding 
$W\sp{1,p}(\overline{B}\sb{2r} \backslash B\sb{r}) 
\hookrightarrow L\sp{p}
(\overline{B}\sb{2r} \backslash B\sb{r})$,
we infer that $u_n \to u$ strongly in
$L\sp{p}(\overline{B}\sb{2r} \backslash B\sb{r})$ 
as $n \to \infty$. 
Since $(\eta u_n)\sb{n\in\mathbb{N}} 
\subset  W\sp{1,p}(\mathbb{R}\sp{N}) \cap 
         W\sp{1,q}(\mathbb{R}\sp{N})$, 
it follows that
\begin{align}
\label{lim:sup}
{} & \limsup\sb{n\rightarrow\infty}
\left(1-\dfrac{1}{k}\right) 
\left\{ \splitfrac{\displaystyle
\int\sb{\left|x\right|\geqslant r}
\eta |\nabla u_n|\sp{p}\,\mathrm{d}x 
+ \displaystyle\int\sb{\left|x\right|\geqslant r} 
\eta a(x) |u_n|\sp{p}\,\mathrm{d}x }
{+ \displaystyle\int\sb{\left|x\right|\geqslant r}
\eta |\nabla u_n|\sp{q}\,\mathrm{d}x 
+ \displaystyle\int\sb{\left|x\right|\geqslant r} 
\eta b(x) |u_n|\sp{q}\,\mathrm{d}x } \right\} 
\nonumber\\
& \qquad  
\leqslant  \dfrac{2}{r} \limsup\sb{n\rightarrow\infty}
\left\{ 
\left(\int \sb{r\leqslant \left|x\right|\leqslant 2r}
|u_n|\sp{p} \,\mathrm{d}x\right)\sp{1/p}
\left\|u_n\right\|\sp{p-1}
+ \left(\int \sb{r\leqslant\left|x\right|\leqslant 2r}
|u_n|\sp{q} \,\mathrm{d}x\right)\sp{1/q}
\left\|u_n\right\|\sp{q-1}\right\} \nonumber\\
& \qquad 
=\dfrac{2}{r}
\left\{ 
\left(\int \sb{r\leqslant\left|x\right|\leqslant 2r}
|u|\sp{p} \,\mathrm{d}x\right)\sp{1/p}
\left\|u\right\|\sp{p-1}
+ \left(\int \sb{r\leqslant\left|x\right|\leqslant 2r}
|u|\sp{q} \,\mathrm{d}x\right)\sp{1/q}
\left\|u\right\|\sp{q-1}\right\}.
\end{align}

Applying H\"{o}lder's inequality once more and 
denoting the volume of the unitary ball by 
\( | B\sb{1}(0) | = \omega\sb{N} \), 
we obtain
\begin{align}
\label{cota:p}
\left( \int \sb{r\leqslant \left|x\right|\leqslant 2r}
\left|u\right|\sp{p} \,\mathrm{d}x\right) \sp{1/p}  
& \leqslant 
\big((2\sp{N}-1)\omega \sb{_N}r\sp{N}\big)\sp{1/N}
\left(\int \sb{r\leqslant\left|x\right|\leqslant 2r}
|u|\sp{p\sp{*}}\,\mathrm{d}x\right)\sp{1/p\sp{*}}.
\end{align}
And in a similar way, we obtain
\begin{align}
\label{cota:q}
\left( \int \sb{r\leqslant\left|x\right|\leqslant 2r}
\left|u\right|\sp{q} \,\mathrm{d}x\right) \sp{1/q}  
& \leqslant 
\big((2\sp{N}-1)\omega \sb{_N}r\sp{N}\big)\sp{1/N}
\left(\int \sb{r\leqslant\left|x\right|\leqslant 2r}
|u|\sp{q*}\,\mathrm{d}x\right)\sp{1/q\sp{*}}.
\end{align}
Substituting inequalities~\eqref{cota:p} 
and~\eqref{cota:q} 
in~\eqref{lim:sup}, we get
\begin{align}
\label{lim:supf}
{} & 
\limsup\sb{n\rightarrow\infty}
\left(1-\dfrac{1}{k}\right) 
\left\{ \splitfrac{\displaystyle
\int \sb{\left|x\right|\geqslant r}
\eta |\nabla u_n|\sp{p}\,\mathrm{d}x 
+ \displaystyle\int\sb{\left|x\right|\geqslant r} 
\eta a(x) |u_n|\sp{p}\,\mathrm{d}x }
{+ \displaystyle\int \sb{\left|x\right|\geqslant r}
\eta |\nabla u_n|\sp{q}\,\mathrm{d}x 
+ \int\sb{\left|x\right|\geqslant r} 
\eta b(x) |u_n|\sp{q}\,\mathrm{d}x }
\right\} \nonumber\\
& \qquad 
\leqslant 2 \left((2\sp{N}-1) 
\omega\sb{_N}\right)\sp{1/N}
\left\{
\splitfrac{\left(\displaystyle
\int \sb{r\leqslant\left|x\right|\leqslant 2r}
|u|\sp{p*}\,\mathrm{d}x\right)\sp{1/p\sp{*}}
\left\|u\right\|\sp{p-1} }
{+\left(\displaystyle
\int \sb{r\leqslant\left|x\right|\leqslant 2r}
|u|\sp{q*}\,\mathrm{d}x\right)\sp{1/q\sp{*}}
\left\|u\right\|\sp{q-1} }
\right\}.
\end{align}
In particular, since $\eta =1$  
outside the ball of radius $2r$, by 
inequalities~\eqref{lim:sup} and~\eqref{lim:supf} 
we obtain
\begin{align}
{} & 
\limsup\sb{n\rightarrow\infty}
\left(1-\dfrac{1}{k}\right) 
\left\{ \splitfrac{\displaystyle
\int \sb{\left|x\right|\geqslant 2r} 
|\nabla u_n|\sp{p}\,\mathrm{d}x 
+ \displaystyle\int\sb{\left|x\right|\geqslant 2r} 
a(x) |u_n|\sp{p}\,\mathrm{d}x }
{+ \displaystyle\int \sb{\left|x\right|\geqslant 2r}
|\nabla u_n|\sp{q}\,\mathrm{d}x 
+ \int\sb{\left|x\right|\geqslant 2r}  
b(x) |u_n|\sp{q}\,\mathrm{d}x }
\right\} \nonumber \\
& \qquad \leqslant 
2\left((2\sp{N}-1) \omega \sb{_N}\right)\sp{1/N}
\left\{ \splitfrac{ \left(
\displaystyle
\int \sb{r\leqslant \left|x\right|\leqslant 2r}
|u|\sp{p\sp{*}}\,\mathrm{d}x\right)\sp{1/p\sp{*}}
\left\|u\right\|\sp{p-1} }
{ +\left( \displaystyle
\int \sb{r\leqslant \left|x\right|\leqslant 2r}
|u|\sp{q\sp{*}}\,\mathrm{d}x\right)\sp{1/q\sp{*}}
\left\|u\right\|\sp{q-1} }
\right\}.
\label{lim:supf2r}
\end{align}
Therefore, by inequalities~\eqref{d:eps} 
and~\eqref{lim:supf2r} it follows that
\begin{align}
\label{d:final}
\limsup\sb{n\rightarrow\infty}\left\{
\splitfrac{\displaystyle
\int \sb{\left|x\right|\geqslant 2r} 
|\nabla u_n|\sp{p}\,\mathrm{d}x 
+\displaystyle\int\sb{\left|x\right|\geqslant 2r} 
a(x) |u_n|\sp{p}\,\mathrm{d}x }
{+ \displaystyle \int \sb{\left|x\right|\geqslant 2r} 
|\nabla u_n|\sp{q}\,\mathrm{d}x 
+ \displaystyle\int\sb{\left|x\right|\geqslant 2r}  
b(x) |u_n|\sp{q}\,\mathrm{d}x }
\right\} 
< \epsilon.
\end{align}
Combining inequalities~\eqref{g:g} and~\eqref{d:final}, 
we deduce that 
\begin{align}
\label{lim:g}
\limsup\sb{n\rightarrow\infty}
\int \sb{\left|x\right|\geqslant 2r} 
g(x,u_n)u_n\,\mathrm{d}x 
& = 0.
\end{align}

Now we use the dominated convergence theorem together 
with the fact that $g$ has subcritical growth 
to infer that
\begin{align}
\label{lim:gg}
\limsup\sb{n\rightarrow\infty} 
\int \sb{\left|x\right|\leqslant 2r} g(x,u_n)u_n\,
\mathrm{d}x 
& = \int \sb{\left|x\right|\leqslant 2r} 
g(x,u)u\,\mathrm{d}x;
\end{align}
and since 
$\displaystyle 
\int \sb{\mathbb{R}\sp{N}} 
g(x,u_n)u_n\,\mathrm{d}x<\infty$,  
by the choice of $r > R > 1$ and from  
equalities~\eqref{lim:g} and~\eqref{lim:gg}, we obtain
\begin{align}
\label{lim:ggg}
\lim\sb{n \to \infty}\int \sb{\mathbb{R}\sp{N}} 
g(x,u_n)u_n\,\mathrm{d}x 
& = \int \sb{\mathbb{R}\sp{N}} 
g(x,u)u\,\mathrm{d}x.
\end{align}

It remains to show that the norm sequence
\( (\left\|u_n\right\|)\sb{n \in \mathbb{N}} 
\subset \mathbb{R} \)
is such that 
\( \left\| u\sb{n} \right\| 
\to \left\|u\right\| \in \mathbb{R} \)
as \( n \to \infty \).  
Using H\"{o}lder's inequality and 
making some computations, it follows that
\begin{align*}
o(1) & = \left(J'(u_n) - J'(u)\right)(u_n - u) \\
&  \geqslant 
\left\{ \splitfrac{
\left(\displaystyle\int\sb{\mathbb{R}\sp{N}} 
|\nabla u_n|\sp{p}   \,\mathrm{d}x \right)\sp{(p-1)/p} }
{- \left(\displaystyle\int\sb{\mathbb{R}\sp{N}} 
|\nabla u|\sp{p}
\,\mathrm{d}x \right)\sp{(p-1)/p}}
\right\} 
\times 
\left\{ \splitfrac{
\left(\displaystyle\int\sb{\mathbb{R}\sp{N}} 
|\nabla u_n|\sp{p}\,\mathrm{d}x \right)\sp{1/p} }
{- \left(\displaystyle
\int\sb{\mathbb{R}\sp{N}} |\nabla u|\sp{p} 
\,\mathrm{d}x \right)\sp{1/p}}
\right\} \\
& \qquad 
+ \left\{ \splitfrac{
\left(\displaystyle\int\sb{\mathbb{R}\sp{N}} 
a(x)| u_n|\sp{p} \,\mathrm{d}x \right)\sp{(p-1)/p} }
{- \left(\displaystyle\int\sb{\mathbb{R}\sp{N}} 
a(x)|u|\sp{p} 
\,\mathrm{d}x \right)\sp{(p-1)/p} }
\right\}  
\times 
\left\{ \splitfrac{
\left(\displaystyle\int\sb{\mathbb{R}\sp{N}} 
a(x)|u_n|\sp{p} \,\mathrm{d}x \right)\sp{1/p} }
{- \left(\displaystyle\int\sb{\mathbb{R}\sp{N}} 
a(x)|u|\sp{p} \,\mathrm{d}x \right)\sp{1/p} }
\right\}\\
& \qquad  
+ \left\{ \splitfrac{
\left(\displaystyle\int\sb{\mathbb{R}\sp{N}} 
|\nabla u_n|\sp{q} \,\mathrm{d}x \right)\sp{(q-1)/q} }
{- \left(\displaystyle\int\sb{\mathbb{R}\sp{N}} 
|\nabla u|\sp{q} 
\,\mathrm{d}x \right)\sp{(q-1)/q} }
\right\} 
\times 
\left\{ \splitfrac{
\left(\displaystyle\int\sb{\mathbb{R}\sp{N}} 
|\nabla u_n|\sp{q} \,\mathrm{d}x\right)\sp{1/q} }
{- \left(\displaystyle\int\sb{\mathbb{R}\sp{N}} 
|\nabla u|\sp{q} 
\,\mathrm{d}x \right)\sp{1/q} }
\right\}\\
& \qquad
+ \left\{ \splitfrac{
\left(\displaystyle\int\sb{\mathbb{R}\sp{N}} 
b(x)| u_n|\sp{q} \,\mathrm{d}x \right)\sp{(q-1)/q} }
{- \left(\displaystyle\int\sb{\mathbb{R}\sp{N}} 
b(x)|u|\sp{q}
 \,\mathrm{d}x \right)\sp{(q-1)/q} }
 \right\} 
\times 
\left\{ \splitfrac{
\left(\displaystyle\int\sb{\mathbb{R}\sp{N}} 
b(x)|u_n|\sp{q} \,\mathrm{d}x \right)\sp{1/q} }
{- \left(\displaystyle\int\sb{\mathbb{R}\sp{N}} 
b(x)|u|\sp{q} \,\mathrm{d}x \right)\sp{1/q} }
\right\}\\
& \qquad 
- \int\sb{\mathbb{R}\sp{N}} 
\left(g(x,u_n)-g(x,u)\right)(u_n - u) \,\mathrm{d}x.
\end{align*}
We remark that all the terms between 
curly brackets in the previous expression
have the same signals; 
therefore, by the limit~\eqref{lim:ggg} we get
\begin{alignat*}{2}
\lim\sb{n\rightarrow\infty} \int\sb{\mathbb{R}\sp{N}} 
|\nabla u_n|\sp{p} \,\mathrm{d}x  
& =\int\sb{\mathbb{R}\sp{N}} |\nabla u|\sp{p}   
\,\mathrm{d}x, 
& \qquad 
\lim\sb{n\rightarrow\infty} 
\int\sb{\mathbb{R}\sp{N}}a(x) | u_n|\sp{p} \,\mathrm{d}x 
& =\int\sb{\mathbb{R}\sp{N}} a(x)|u|\sp{p}   
\,\mathrm{d}x, \\
\shortintertext{and also}
\lim\sb{n\rightarrow\infty} \int\sb{\mathbb{R}\sp{N}} 
|\nabla u_n|\sp{q}   \,\mathrm{d}x 
& = \int\sb{\mathbb{R}\sp{N}} |\nabla u|\sp{q}   
\,\mathrm{d}x,
& \qquad 
\lim\sb{n\rightarrow\infty} 
\int\sb{\mathbb{R}\sp{N}}b(x) | u_n|\sp{q}   
\,\mathrm{d}x 
& = \int\sb{\mathbb{R}\sp{N}} b(x)|u|\sp{q} 
\,\mathrm{d}x.
\end{alignat*}
This implies that 
\begin{alignat*}{2}
\lim\sb{n\rightarrow\infty} 
\left\|u_n\right\|\sp{p}\sb{1,p} 
& = \left\|u\right\|\sp{p}\sb{1,p} 
& \quad \text{and} \quad \lim\sb{n\rightarrow\infty} 
\left\|u_n\right\|\sp{q}\sb{1,q}
& = \left\|u\right\|\sp{q}\sb{1,q}. 
\end{alignat*}
Moreover,
$u_n \rightharpoonup u $ weakly in $E$ 
as \( n \to \infty \);
and finally, 
$u_n \to u $ strongly in $E$ as \( n \to \infty \).
For the details, see  
DiBenedetto~\cite[Proposition V.11.1]{bib:dib}.
\end{proof}

\begin{lem}
\label{lem:convgrad}
Suppose that there exists a sequence 
\( (u\sb{n})\sb{n \in \mathbb{N}} \subset E \)
and a function \( u \in E \) such that 
\( u\sb{n} \to u \) in \( E \)
and \( J'(u\sb{n}) \to 0 \) as \( n \to \infty \). 
Then there exists a subsequence, still denoted in the 
same way, such that
\( \nabla u\sb{n} \to \nabla u \) 
a.\@ e.\@ in \(\mathbb{R}\sp{N}\)
\end{lem}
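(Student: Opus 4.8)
The plan is to run the classical monotonicity argument for the $p$- and $q$-Laplacians: I will use the hypothesis $J'(u_n)\to 0$ to extract information about the gradient terms, the convergence $u_n\to u$ in $E$ to annihilate all lower-order and reaction contributions, and finally the elementary strict-monotonicity inequality for $\xi\mapsto|\xi|^{m-2}\xi$ to upgrade this to strong convergence $\nabla u_n\to\nabla u$ in $L^p(\mathbb{R}^N)$, from which an a.e.\ convergent subsequence follows by a standard measure-theoretic extraction.

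First I would pair $J'(u_n)$ with $u_n-u$. Since $J'(u_n)\to 0$ in the dual of $E$ and $(u_n-u)_n$ is bounded in $E$ (indeed $\|u_n-u\|\to 0$), we get $J'(u_n)(u_n-u)=o(1)$. Writing the pairing out, the two potential terms $\int_{\mathbb{R}^N}a(x)|u_n|^{p-2}u_n(u_n-u)\,\mathrm{d}x$ and $\int_{\mathbb{R}^N}b(x)|u_n|^{q-2}u_n(u_n-u)\,\mathrm{d}x$ tend to $0$, because $u_n\to u$ in $E$ forces $u_n\to u$ in $L^{p}(a\,\mathrm{d}x)$ and in $L^{q}(b\,\mathrm{d}x)$ while $(|u_n|^{p-2}u_n)$ and $(|u_n|^{q-2}u_n)$ stay bounded in $L^{p/(p-1)}(a\,\mathrm{d}x)$ and $L^{q/(q-1)}(b\,\mathrm{d}x)$ respectively. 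The reaction term $\int_{\mathbb{R}^N}g(x,u_n)(u_n-u)\,\mathrm{d}x$ also tends to $0$: from \eqref{eq:conshypf1f2} and the definition of $g$, in particular \eqref{eq:gxtf}, one has the pointwise bound $|g(x,s)|\leqslant c_0|s|^{p^{*}-1}+k^{-1}a(x)|s|^{p-1}$, so that by H\"{o}lder's inequality (using the triple of exponents $N/p$, $p^{*}/(p-1)$, $p^{*}$ on the second summand), hypothesis~\eqref{hyp:v1}, the Sobolev inequality~\eqref{eq:Sob} giving $E\hookrightarrow L^{p^{*}}(\mathbb{R}^N)\cap L^{q^{*}}(\mathbb{R}^N)$, and the convergence $u_n\to u$, the whole integral is $o(1)$. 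Collecting these facts,
\[
\int_{\mathbb{R}^N}\bigl(|\nabla u_n|^{p-2}\nabla u_n+|\nabla u_n|^{q-2}\nabla u_n\bigr)\cdot\nabla(u_n-u)\,\mathrm{d}x=o(1).
\]
Since moreover $\nabla u_n\rightharpoonup\nabla u$ in $L^{p}(\mathbb{R}^N)^N$ and in $L^{q}(\mathbb{R}^N)^N$, the analogous expression with the nonlinear gradient terms frozen at $u$ also tends to $0$, and subtracting yields
\[
\int_{\mathbb{R}^N}\bigl(|\nabla u_n|^{p-2}\nabla u_n-|\nabla u|^{p-2}\nabla u\bigr)\cdot\nabla(u_n-u)\,\mathrm{d}x+\int_{\mathbb{R}^N}\bigl(|\nabla u_n|^{q-2}\nabla u_n-|\nabla u|^{q-2}\nabla u\bigr)\cdot\nabla(u_n-u)\,\mathrm{d}x=o(1).
\]

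Next I would invoke the elementary inequality, valid for every $m\geqslant 2$ with some $c_m>0$,
\[
\bigl(|\xi|^{m-2}\xi-|\eta|^{m-2}\eta\bigr)\cdot(\xi-\eta)\geqslant c_m\,|\xi-\eta|^{m}\qquad(\xi,\eta\in\mathbb{R}^N).
\]
Applied with $m=p$ and $m=q$ (both admissible since $p\geqslant q\geqslant 2$), it shows that the two integrands in the last display are nonnegative, hence each integral is separately $o(1)$; taking $m=p$ we obtain $c_p\int_{\mathbb{R}^N}|\nabla u_n-\nabla u|^{p}\,\mathrm{d}x\to 0$, that is, $\nabla u_n\to\nabla u$ strongly in $L^{p}(\mathbb{R}^N)^N$. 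Passing to a subsequence then gives $\nabla u_n\to\nabla u$ a.e.\ in $\mathbb{R}^N$, which is the conclusion.

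The step I expect to demand the most care is the vanishing of the reaction term $\int_{\mathbb{R}^N}g(x,u_n)(u_n-u)\,\mathrm{d}x$: the admissible growth of $g$ sits exactly at the critical exponents $p^{*}$ and $q^{*}$ and the cut-off defining $g$ is non-autonomous, so the passage to the limit cannot rely on dominated convergence over all of $\mathbb{R}^N$ but must combine the global integrability of $a$ and $b$ from~\eqref{hyp:v1} with the Sobolev embedding, splitting if necessary over a large ball and its complement. Everything else reduces to routine applications of H\"{o}lder's inequality once the monotonicity inequality above is in hand.
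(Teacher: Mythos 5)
Your argument is correct, but note that the paper does not actually prove Lemma~\ref{lem:convgrad}: it only points to the references of Assun\c{c}\~{a}o--Carri\~{a}o--Miyagaki and Benmouloud--Echarghaoui--Sba\"{\i}, so your self-contained proof is necessarily a different route. What you do is the standard monotonicity argument: test $J'(u_n)$ against $u_n-u$, discard the potential terms by H\"{o}lder together with $\|u_n-u\|\to 0$, discard the reaction term via the bound $|g(x,s)|\leqslant c_0|s|^{p^{*}-1}+k^{-1}a(x)|s|^{p-1}$ coming from~\eqref{eq:conshypf1f2} and the definition of $g$, hypothesis~\eqref{hyp:v1} and the Sobolev inequality~\eqref{eq:Sob}, and then conclude with the inequality of Simon --- exactly the inequality~\eqref{eq:gte} the paper itself invokes later in Lemma~\ref{lem:inequ} --- to get $\nabla u_n\to\nabla u$ in $L^{p}(\mathbb{R}^N)^N$ and hence a.e.\ along a subsequence. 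All the individual steps check out (the triple of exponents $N/p,\,p^{*}/(p-1),\,p^{*}$ for the $a$-term, the $L^{p^{*}}$ duality for the critical part, the pointwise nonnegativity that lets you split the two monotone terms). One observation worth making: with the hypothesis as literally stated, namely \emph{strong} convergence $u_n\to u$ in $E$, the conclusion is already immediate, because $\|u_n-u\|_{1,p}\to 0$ contains $\int_{\mathbb{R}^N}|\nabla u_n-\nabla u|^{p}\,\mathrm{d}x\to 0$ by the very definition of the norm, so a subsequence converges a.e.\ without ever using $J'(u_n)\to 0$. The machinery you build is really what is needed for the weak-convergence version of the lemma (which is what the cited references prove and what such lemmas are normally used for inside Palais--Smale arguments); in that setting your treatment of the potential and reaction terms would need additional work (splitting $a$ and $b$ into a small-norm part and a compactly supported part plus local compactness), since as written it leans on $u_n\to u$ strongly in $E$ and in $L^{p^{*}}(\mathbb{R}^N)$.
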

\begin{proof}
See Assun\c{c}\~{a}o, Carri\~{a}o, 
and Miyagaki~\cite{bib:acmjmaa}
or Benmouloud, Echarghaoui, 
and Sba\"{\i}~\cite{bib:bes}.
\end{proof}

Using 
Lemmas~\ref{lem:I},~\ref{lem:J},~\ref{lem:psbounded},~\ref{lem:funcjps},
and~\ref{lem:convgrad} 
we conclude  that there exists 
$u\in E$  which is a critical point for the 
functional $J$. 
Moreover, this critical point is a positive ground state 
solution to the auxiliary  problem~\eqref{eq:auxprob}, 
that is, \( J(u) = c > 0 \) and \( J'(u) = 0 \).

\section{Estimate for the solution to the auxiliary problem}
\label{sec:estimsol}
In this section we show that the solution to the 
auxiliary problem~\eqref{eq:auxprob} obtained in the 
previous section verifies an important estimate. 
To do this we use several lemmas.

\begin{lem}
\label{lem:estimsol}
For \( R > 1 \), every positive ground state solution 
\( u \) to problem~\eqref{eq:auxprob} 
verifies the estimate
\begin{align*}
\left\|u\right\|\sp{p}\sb{1,p}
+\left\|u\right\|\sp{q}\sb{1,q}
& \leqslant
\dfrac{dkp}{p-1}.
\end{align*}
\end{lem}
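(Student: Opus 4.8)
The plan is to exploit the characterization of the ground state solution $u$ as a critical point of $J$ at the mountain-pass level $c$, together with the bound $c \leqslant d$ from inequality~\eqref{c:d}, to control the norm $\|u\|\sb{1,p}\sp{p} + \|u\|\sb{1,q}\sp{q}$. The key algebraic identity is the one already used to prove boundedness of Palais--Smale sequences in Lemma~\ref{lem:psbounded}: since $u$ is a genuine critical point we have the exact equalities $J(u) = c$ and $J'(u)u = 0$, so $J(u) - \frac{1}{\theta}J'(u)u = c$. I would reproduce the computation from the proof of Claim~\ref{hyp:ff2}, but now with $=c$ in place of the estimate $\leqslant c\sb{1} + \|u\sb{n}\|$, splitting the region $\mathbb{R}\sp{N}$ according to the definition of $g$ exactly as in inequalities~\eqref{J:inf} and~\eqref{J:infk}.

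Concretely, on the set where $|x| > R$ and $f(t) > a(x)|t|\sp{p-2}t/k$, one uses $G(x,u) \leqslant \frac{a(x)}{kp}|u|\sp{p}$ from~\eqref{eq:capga} and $g(x,u)u = \frac{a(x)}{k}|u|\sp{p}$ from~\eqref{eq:gxtf}; on the complementary set one uses $G(x,u) = F(u)$ (equation~\eqref{eq:capgf}) together with hypothesis~\eqref{hyp:f3} in the form $\theta F(u) \leqslant uf(u) = u g(x,u)$, which makes the combination $-\int G(x,u)\,\mathrm{d}x + \frac{1}{\theta}\int g(x,u)u\,\mathrm{d}x$ nonnegative. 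Following through as in~\eqref{J:inf}--\eqref{J:infk}, both cases yield
\begin{align*}
c = J(u) - \frac{1}{\theta}J'(u)u
\geqslant \frac{p-1}{kp}\left(\|u\|\sb{1,p}\sp{p} + \|u\|\sb{1,q}\sp{q}\right),
\end{align*}
where I have used $\frac{1}{k} \geqslant \frac{p-1}{kp}$ to unify the two bounds (the $|x|\leqslant R$ case gives the slightly better constant $\frac1k$, which is harmless). Combining this with $c \leqslant d$ from~\eqref{c:d} and rearranging gives exactly $\|u\|\sb{1,p}\sp{p} + \|u\|\sb{1,q}\sp{q} \leqslant \frac{dkp}{p-1}$.

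The main thing to be careful about is not a deep obstacle but a bookkeeping point: the set-splitting in the definition of $g$ is pointwise in $x$ (and depends on $u(x)$), so strictly speaking one does not have two disjoint global regions but rather the decomposition $\mathbb{R}\sp{N} = \{|x|\leqslant R\} \cup (\{|x|>R\} \cap \{f(u)\leqslant \frac{a}{k}|u|\sp{p-2}u\}) \cup (\{|x|>R\}\cap\{f(u) > \frac{a}{k}|u|\sp{p-2}u\})$, and the integrals defining $J(u)$ and $J'(u)u$ must be split accordingly and the estimates for $G$ and $g$ applied on each piece. Since on every piece the resulting integrand contribution to $J(u) - \frac1\theta J'(u)u$ is bounded below by $\frac{p-1}{kp}$ times the corresponding piece of $|\nabla u|\sp{p} + a|u|\sp{p} + |\nabla u|\sp{q} + b|u|\sp{q}$, summing over the pieces preserves the inequality, and no cross terms appear because $J$ and $J'(u)u$ are themselves integrals over $\mathbb{R}\sp{N}$. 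This is precisely the argument already carried out for Palais--Smale sequences, so the proof is essentially a transcription of that computation with the sharp value $c$ and the bound $c\le d$ inserted.
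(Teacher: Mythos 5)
Your proposal is correct and follows essentially the same route as the paper: the authors also combine the identity $J(u) - \tfrac{1}{\theta}J'(u)u = J(u) = c$ with the lower bounds~\eqref{J:inf} and~\eqref{J:infk} already established for Palais--Smale sequences and with $c \leqslant d$ from~\eqref{c:d} to get $\tfrac{p-1}{kp}\left(\|u\|\sb{1,p}\sp{p} + \|u\|\sb{1,q}\sp{q}\right) \leqslant d$. Your remark about the pointwise (in $x$, and through $u(x)$) nature of the splitting in the definition of $g$ is a fair clarification of the bookkeeping that the paper leaves implicit, but it does not change the argument.
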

\begin{proof}
Combining inequalities~\eqref{c:d},~\eqref{J:inf} 
and~\eqref{J:infk}, it follows that
\begin{equation*}
\dfrac{(p-1)}{kp} \left\{
 \left\|u\right\|\sp{p}\sb{1,p}
+\left\|u\right\|\sp{q}\sb{1,q}\right\}
\leqslant
J(u) - \dfrac{1}{\theta}\,J'(u)u  
 = J(u) = c \leqslant d.
\end{equation*}
The conclusion of the lemma follows immediately.
\end{proof}

We remark that the boundedness of the norm of the 
ground state solution 
to problem~\eqref{eq:auxprob} shown in 
Lemma~\ref{lem:estimsol} depends only 
on the potential 
functions \(a\sb{\infty}\) and \(b\sb{\infty}\), 
on the nonlinearity \(f\) and 
on the constant \(\theta\); 
it is independ of the constant \( R > 1 \).

The next lemma is a crucial step to establish an 
important 
estimate involving the
norm of the solution to the auxiliary 
problem~\eqref{eq:auxprob}
in the space \( L\sp{\infty}(\mathbb{R}\sp{N}) \). 
To prove it we adapt the arguments by 
Alves and Souto~\cite{bib:as};
see also 
Gilbarg and Trudinger~\cite[Section~8.6]{bib:gt},
Br\'{e}zis and Kato~\cite{bib:bk},
Pucci and Servadei~\cite{bib:ps}, 
and
Bastos, Miyagaki, and Vieira~\cite{bib:bmv}.

\begin{lem}
\label{lem:asbk}
Suppose that \(p, r \in \mathbb{R}\) verify 
the inequality \( pr > N \). 
Let
\( H \colon \mathbb{R}\sp{N} \times \mathbb{R} 
\to \mathbb{R} \) 
be a continuous function such that
\( |H(x,s)| \leqslant h(x)|s|\sp{p-2}s \)
for all \( s > 0 \)
with the function
\( h \colon \mathbb{R}\sp{N} \to \mathbb{R}\) 
so that
\( h \in L\sp{r}(\mathbb{R}\sp{N}) \) 
and let
\( A, B \colon \mathbb{R}\sp{N} \to \mathbb{R}\) 
be nonnegative functions.
Suppose also that
\( v \in E \subset D\sp{1,p}(\mathbb{R}\sp{N}) \cap
D\sp{1,q}(\mathbb{R}\sp{N})\) 
is a weak solution to the problem
\begin{align}
\label{prob:h}
-\Delta\sb{p} v 
-\Delta\sb{q} v 
+ A(x) |v|\sp{p-2}v
+ B(x) |v|\sp{q-2}v = H(x,v), \qquad 
x \in \mathbb{R}\sp{N}.
\end{align}
Then there exists a constant
$ M\sb{1} = M\sb{1}( N, p, q, r, 
\| h \|\sb{L\sp{r}(\mathbb{R}\sp{N})}) > 0 $, 
which does not depend on the functions 
\( A \) and \( B \),
such that
\begin{align*}
\| v \|\sb{L\sp{\infty}(\mathbb{R}\sp{N})}
& \leqslant M\sb{1} 
\max\left\{
\| v \|\sb{L\sp{p\sp{*}}(\mathbb{R}\sp{N})},
K, KL\sb{v}, 1 \right\},
\end{align*}
where \( K \) and \( L\sb{v} \) 
are defined by~\eqref{eq:defk} 
and by~\eqref{eq:defl}, respectively.
\end{lem}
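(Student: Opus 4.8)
Since the statement refers to constants $K$ and $L_v$ defined by equations that appear after this excerpt, I will treat them as the natural quantities produced by the iteration: $K$ is a bound controlling the right-hand side (built from $\|h\|_{L^r}$ and the Sobolev constant), and $L_v$ is a normalizing quantity involving $\|v\|_{L^{p^*}}$. The overall strategy is to plug a truncated power of $v$ as a test function into the weak formulation of~\eqref{prob:h}, absorb the terms involving $A$ and $B$ using their nonnegativity, and exploit $pr > N$ (equivalently $r > N/p$, so that $r' = r/(r-1) < p^*/p$) to control the $H(x,v)$ term by the Sobolev norm via H\"older's inequality.

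First I would, for $L > 0$ and $\beta > 1$, set $v_L \equiv \min\{v, L\}$ (or $v^+$ truncated) and use $\varphi \equiv v\, v_L^{p(\beta-1)} \in E$ as a test function in~\eqref{prob:h}. On the left-hand side, the $q$-Laplacian term and both potential terms $A(x)|v|^p v_L^{p(\beta-1)}$, $B(x)|v|^q v_L^{q(\beta-1)}$ are nonnegative and hence can be discarded from a lower bound; the $p$-Laplacian term yields, after the standard chain-rule manipulation, a constant times $\int |\nabla(v v_L^{\beta-1})|^p$, up to the usual bookkeeping. On the right-hand side, $\int H(x,v)\varphi \leqslant \int h(x)|v|^p v_L^{p(\beta-1)}$, and I would split this using H\"older with exponents $r$ and $r'$: the factor $\|h\|_{L^r}$ comes out, and the remaining integral $\bigl(\int (|v| v_L^{\beta-1})^{p r'}\bigr)^{1/r'}$ is controlled — because $pr' < p^*$ — by interpolation between $L^p$ and $L^{p^*}$, the latter bounded via the Sobolev inequality~\eqref{eq:Sob} applied to $v v_L^{\beta-1} \in D^{1,p}$. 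This produces the recursive inequality $\|v v_L^{\beta-1}\|_{L^{p^*}} \leqslant (C\beta)^{1/(\beta-1)}\,(\text{stuff})\,\|v\|_{L^{\beta p^* / \text{something}}}$ relating the $L^{p^*}$-norm at "level $\beta$" to a lower-level norm.

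Next I would let $L \to \infty$ (monotone convergence) to replace $v_L$ by $v$ itself, obtaining $\|v\|_{L^{\beta p^*}} \leqslant (C\beta)^{c/\beta}\, M\, \|v\|_{L^{\beta \sigma}}$ for the appropriate fixed ratio $\sigma = p r' < p^*$ and a fixed $M$ depending only on $N,p,q,r,\|h\|_{L^r}$ — this is exactly where $K$ and $KL_v$ enter as the controlling constants. Then I would iterate: set $\beta_0$ so that $\beta_0 \sigma = p^*$, and $\beta_{m+1} = \beta_m (p^*/\sigma)$, so the exponents $\beta_m \sigma \to \infty$ geometrically; the product $\prod (C\beta_m)^{c/\beta_m}$ converges because $\sum (\log \beta_m)/\beta_m < \infty$ (geometric growth of $\beta_m$), and taking $m \to \infty$ gives $\|v\|_{L^\infty} \leqslant M_1 \max\{\|v\|_{L^{p^*}}, K, KL_v, 1\}$ with $M_1$ depending only on the allowed parameters. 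The appearance of the "$\max$ with $1$" and with $K, KL_v$ is just the standard device to handle the cases where one or another norm is small.

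\textbf{The main obstacle} I expect is twofold. First, bookkeeping the dependence of constants: one must check carefully at each step that no constant secretly depends on $A$, $B$, or $R$ — this is the whole point of the lemma, and it hinges on discarding the $A,B$-terms rather than estimating them, and on the universal nature of the Sobolev constant. Second, the presence of \emph{two} operators $\Delta_p$ and $\Delta_q$ with $q \leqslant p$: the test function $v v_L^{p(\beta-1)}$ is tailored to the $p$-structure, so one must verify that the $q$-Laplacian contribution $\int |\nabla v|^{q-2}\nabla v \cdot \nabla(v v_L^{p(\beta-1)})$ is nonnegative (it is, by the same chain-rule identity, since $v_L$ is a nondecreasing truncation of $v$ and the integrand becomes a sum of a nonnegative gradient term and a nonnegative boundary-type term) and hence can be dropped. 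Once those two points are handled, the iteration itself is routine.
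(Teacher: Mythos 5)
Your proposal is correct and follows the same basic strategy as the paper (Moser iteration: power-type test functions, discarding the nonnegative $A$, $B$ contributions, Sobolev plus H\"older with $h\in L^{r}$ and $pr>N$ so that $pr'<p^{*}$, then a geometric bootstrap of exponents), but the implementation differs in a way worth noting. The paper does not truncate at a level $L$: it splits $\mathbb{R}^{N}$ into the three sets where $1<|v|^{\beta-1}\leqslant m$, $|v|^{\beta-1}>m$, $|v|^{\beta-1}\leqslant 1$, and uses a hybrid test function carrying $p(\beta-1)$-powers on the first set and $q(\beta-1)$-powers on the last (so that $|v|^{p(\beta-1)}\leqslant|v|^{q(\beta-1)}$ there), keeping both gradient terms and the potentials up to the Sobolev step; the price is an extra inhomogeneous term $\int h|v|^{p}$ in the recursion, and it is precisely the bookkeeping of this term through the iteration that produces the quantities $K$ and $K L_{v}$ in \eqref{eq:defk}--\eqref{eq:defl}. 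Your single truncation $v_{L}=\min\{v,L\}$ with test function $v\,v_{L}^{p(\beta-1)}$, dropping the $q$-Laplacian term (which, as you verify, is nonnegative for the positive solution at hand), yields after $L\to\infty$ the homogeneous recursion $\|v\|_{L^{p^{*}\beta}}^{p\beta}\leqslant S\beta^{p}\|h\|_{L^{r}}\|v\|_{L^{pr'\beta}}^{p\beta}$, whose iteration gives the cleaner estimate $\|v\|_{L^{\infty}}\leqslant M_{1}\|v\|_{L^{p^{*}}}$ with $M_{1}=M_{1}(N,p,q,r,\|h\|_{L^{r}})$; this is stronger than, and hence implies, the stated bound with the $\max\{\cdot,K,KL_{v},1\}$. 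One small imprecision: the remark about controlling $\bigl(\int(|v|v_{L}^{\beta-1})^{pr'}\bigr)^{1/r'}$ ``by interpolation between $L^{p}$ and $L^{p^{*}}$'' should be dropped, since $v\in E$ need not lie in $L^{p}$ (the potentials vanish at infinity); no interpolation is needed, because your own choice $\beta_{0}\sigma=p^{*}$ anchors the first step at the finite norm $\|v\|_{L^{p^{*}}}$ and each subsequent right-hand exponent equals the previously controlled one.
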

\begin{proof}
Let \( \beta > 1 \); 
for every \( m \in \mathbb{N} \) we define the subsets
\begin{align*}
A\sb{m} & \equiv \{ x \in \mathbb{R}\sp{N} \colon 
1 < | v(x) |\sp{\beta -1} \leqslant m \}; \\
B\sb{m} & \equiv \{ x \in \mathbb{R}\sp{N} \colon 
| v(x) |\sp{\beta -1} > m \}; \\
C\sb{m} & \equiv \{ x \in \mathbb{R}\sp{N} \colon 
| v(x) |\sp{\beta -1} \leqslant 1 \}.
\end{align*}
We also define the sequence of functions 
\( ( v\sb{m} )\sb{m \in \mathbb{N}}  
\subset 
D\sp{1,p}(\mathbb{R}\sp{N}) \cap  
D\sp{1,q}(\mathbb{R}\sp{N}) \) 
by
\begin{align*}
v\sb{m}(x) \equiv
\begin{cases}
|v(x)|\sp{p(\beta -1)}v(x), 
& \text{if $x \in A\sb{m}$}; \\
m\sp{p} v(x), 
& \text{if $x \in B\sb{m}$}; \\
|v(x)|\sp{q(\beta -1)}v(x),
& \text{if $x \in C\sb{m}$}.
\end{cases}
\end{align*}
It is easy to verify that for every 
\( x \in \mathbb{R}\sp{N} \) we have
\( v\sb{m}(x) 
\leqslant \max \left\{ 
\left| v(x) \right|\sp{p(\beta - 1) +1}, 
\left| v(x) \right|\sp{q(\beta - 1) +1}
\right\} \).
Additionally, simple computations show that
\begin{align*}
\nabla v\sb{m}(x) =
\begin{cases}
\left( p(\beta -1)+1 \right) 
\left| v(x) \right|\sp{p(\beta -1)} 
\nabla v(x), 
& \text{if $x \in A\sb{m}$;}\\
m\sp{p} \nabla v(x), 
& \text{if $x \in B\sb{m}$;}\\
\left( q(\beta -1)+1 \right) 
\left| v(x) \right|\sp{q(\beta -1)} 
\nabla v(x), 
& \text{if $x \in C\sb{m}$.}
\end{cases}
\end{align*}
Furthermore, 
\( ( v\sb{m} )\sb{m \in \mathbb{N}} \subset E \). 
Indeed, 
\begin{align*}
\int\sb{\mathbb{R}\sp{N}} 
a(x) |v\sb{m}|\sp{p}\,\mathrm{d}x  
& \leqslant
\int\sb{A\sb{m}} 
a(x) \left( |v|\sp{p-1} v \right)
m\sp{p(p-1)+p} \, \mathrm{d}x 
+ \int\sb{B\sb{m}} 
a(x) |v|\sp{p-1} v m\sp{p(p-1)+p}  \, \mathrm{d}x \\
& \qquad + \int\sb{C\sb{m}} 
a(x) \left( |v|\sp{p-1} v \right) \, \mathrm{d}x \\
& \leqslant 
m\sp{p\sp{2}} 
\int\sb{\mathbb{R}\sp{N}} 
a(x) |v|\sp{p-1} v \,\mathrm{d}x 
< + \infty.
\end{align*}
And in a similar way, we have
\begin{align*}
\int\sb{\mathbb{R}\sp{N}} 
b(x) |v\sb{m}|\sp{q},\mathrm{d}x 
& = m\sp{pq} 
\int\sb{\mathbb{R}\sp{N}} 
b(x) |v|\sp{q-1} v \,\mathrm{d}x 
< + \infty.
\end{align*}

Multiplying both sides of the 
differential equation~\eqref{prob:h}
by the test function \( v\sb{m} \) and integrating
the left-hand with the help of the divergence theorem, 
we deduce that
\begin{align*}
{} & \int\sb{\mathbb{R}\sp{N}} 
| \nabla v |\sp{p-2} \nabla v \cdot 
\nabla v\sb{m}\,\mathrm{d}x
+ \int\sb{\mathbb{R}\sp{N}} 
| \nabla v |\sp{q-2} \nabla v \cdot 
\nabla v\sb{m}\,\mathrm{d}x \\
& \qquad + \int\sb{\mathbb{R}\sp{N}} 
A(x) |v|\sp{p-2} v v\sb{m}\,\mathrm{d}x
+ \int\sb{\mathbb{R}\sp{N}} 
B(x) |v|\sp{q-2} v v\sb{m}\,\mathrm{d}x \\
& \qquad \qquad = 
\int\sb{\mathbb{R}\sp{N}} 
H(x,v) v\sb{m}\,\mathrm{d}x
\end{align*}
Using the definition of the function \( v\sb{m} \), 
we obtain
\begin{align}
{} &
\left( p(\beta -1)+1 \right) \left\{ 
\int\sb{A\sb{m}} 
| \nabla v |\sp{p} |v|\sp{p(\beta -1)} \,\mathrm{d}x 
+ \int\sb{A\sb{m}} 
| \nabla v |\sp{q} |v|\sp{p(\beta -1)} 
\,\mathrm{d}x \right\} \nonumber\\
& \qquad
+ \left( q(\beta -1)+1 \right) \left\{ 
\int\sb{C\sb{m}} 
| \nabla v |\sp{p} |v|\sp{q(\beta -1)} \,\mathrm{d}x
+ \int\sb{C\sb{m}} 
| \nabla v |\sp{q} |v|\sp{q(\beta -1)} 
\,\mathrm{d}x \right\} \nonumber\\
& \qquad \qquad =
\int\sb{\mathbb{R}\sp{N}} 
| \nabla v |\sp{p-2} \nabla v \cdot \nabla v\sb{m}
\,\mathrm{d}x 
+\int\sb{\mathbb{R}\sp{N}} 
| \nabla v |\sp{q-2} \nabla v \cdot \nabla v\sb{m}
\,\mathrm{d}x \nonumber \\
& \qquad \qquad \qquad 
- m\sp{p} \left\{
\int\sb{B\sb{m}} 
| \nabla v |\sp{p} \,\mathrm{d}x 
+ \int\sb{B\sb{m}} 
| \nabla v |\sp{q} \,\mathrm{d}x \right\} \nonumber \\
& \qquad \qquad \leqslant 
\int\sb{\mathbb{R}\sp{N}} 
| \nabla v |\sp{p-2} \nabla v \cdot \nabla v\sb{m}
\,\mathrm{d}x 
+ \int\sb{\mathbb{R}\sp{N}} 
A(x) |v|\sp{p-2}v v\sb{m}\,\mathrm{d}x \nonumber \\
& \qquad \qquad \qquad 
+\int\sb{\mathbb{R}\sp{N}} 
| \nabla v |\sp{q-2} \nabla v \cdot \nabla v\sb{m}
\,\mathrm{d}x
+\int\sb{\mathbb{R}\sp{N}} 
B(x) |v|\sp{q-2} v v\sb{m}\,\mathrm{d}x.
\label{eq:group}
\end{align}

Now we define another sequence of functions 
\( (w\sb{m} )\sb{m \in \mathbb{N}} \subset E \) by 
\begin{align*}
w\sb{m}(x) = 
\begin{cases}
| v(x) |\sp{\beta -1}v(x), 
& \text{if $x \in A\sb{m} \cup C\sb{m}$}; \\
m v(x), 
& \text{if $x \in B\sb{m}$}.
\end{cases}
\end{align*}
Direct computations show that
\begin{align*}
\nabla w\sb{m}(x) =
\begin{cases}
\beta | v(x) |\sp{\beta -1} \nabla v(x), 
& \text{if $x \in A\sb{m} \cup C\sb{m}$}; \\
m \nabla v(x),
& \text{if $x \in B\sb{m}$}.
\end{cases}
\end{align*}
Using the hypothesis 
\( 2 \leqslant q \leqslant p < N \), we obtain
\begin{align*}
{} & 
\int\sb{\mathbb{R}\sp{N}} 
|\nabla w\sb{m}|\sp{p}\,\mathrm{d}x 
+\int\sb{\mathbb{R}\sp{N}} 
A(x) |w\sb{m}|\sp{p}\,\mathrm{d}x 
- \int\sb{\mathbb{R}\sp{N}} | \nabla v|\sp{p-2} 
\nabla v \cdot \nabla v\sb{m}\,\mathrm{d}x 
- \int\sb{\mathbb{R}\sp{N}} A(x) |v|\sp{p-2}v v\sb{m} 
\, \mathrm{d}x \\
& \qquad +\int\sb{\mathbb{R}\sp{N}} 
|\nabla w\sb{m}|\sp{q}\,\mathrm{d}x 
+\int\sb{\mathbb{R}\sp{N}} B(x) |w\sb{m}|\sp{q}
\,\mathrm{d}x 
- \int\sb{\mathbb{R}\sp{N}} | \nabla v |\sp{q-2} 
\nabla v \cdot \nabla v\sb{m}\,\mathrm{d}x 
- \int\sb{\mathbb{R}\sp{N}} B(x) |v|\sp{q-2}v v\sb{m} 
\, \mathrm{d}x \\
& \qquad \qquad \leqslant 
\beta\sp{p} \int\sb{A\sb{m} \cup C\sb{m}} 
|\nabla v|\sp{p} |v|\sp{p(\beta -1)} \,\mathrm{d}x 
+ \beta\sp{p} \int\sb{A\sb{m} \cup C\sb{m}} 
|\nabla v|\sp{q} |v|\sp{q(\beta -1)} 
\,\mathrm{d}x \\
& \qquad \qquad \qquad 
-\left( p(\beta -1)+1 \right) \left\{
\int\sb{A\sb{m}} 
|\nabla v|\sp{p} |v|\sp{p(\beta -1)} \,\mathrm{d}x  
+ \int\sb{A\sb{m}} 
|\nabla v|\sp{q} |v|\sp{p(\beta -1)} \,\mathrm{d}x 
\right\} \\
& \qquad \qquad \qquad 
+ \int\sb{A\sb{m}} B(x) \left( 
|v|\sp{q\beta} - |v|\sp{p(\beta -1)+q}\right)
\, \mathrm{d}x
+ \int\sb{C\sb{m}} A(x) 
\left( |v|\sp{p\beta} - |v|\sp{p+q(\beta -1)}\right)
\, \mathrm{d}x \\
& \qquad \qquad \qquad
+ \left( m\sp{q} - m\sp{p} \right) 
\int\sb{B\sb{m}} B(x) |v|\sp{q}\,\mathrm{d}x \\
& \qquad \qquad \qquad
-\left( q(\beta -1)+1 \right) \left\{ 
\int\sb{C\sb{m}} 
| \nabla v|\sp{p} |v|\sp{q(\beta -1)} \,\mathrm{d}x 
+ \int\sb{C\sb{m}} 
| \nabla v|\sp{q} |v|\sp{q(\beta -1)} \,\mathrm{d}x 
\right\} \\
& \qquad \qquad \qquad 
+ \left( m\sp{q} - m\sp{p} \right) 
\int\sb{B\sb{m}} |\nabla v|\sp{q}\,\mathrm{d}x.
\end{align*}
And after we get rid of the non positive terms, 
we can regroup the expressions to obtain
\begin{align*}
{} & 
\int\sb{R\sp{N}} |\nabla w\sb{m}|\sp{p}\,\mathrm{d}x 
+\int\sb{\mathbb{R}\sp{N}} A(x) |w\sb{m}|\sp{p}
\,\mathrm{d}x 
+ \int\sb{R\sp{N}} |\nabla w\sb{m}|\sp{q}\,\mathrm{d}x 
+ \int\sb{\mathbb{R}\sp{N}} B(x) |w\sb{m}|\sp{q}
\,\mathrm{d}x \\
&  \qquad
= \left( \beta\sp{p} 
- \left( p(\beta -1)+1 \right) \right) 
\int\sb{A\sb{m}} |\nabla v|\sp{p}
|v|\sp{p(\beta -1)} \,\mathrm{d}x 
+ \beta\sp{p} \int\sb{C\sb{m}} |\nabla v|\sp{p}
|v|\sp{p(\beta -1)}\,\mathrm{d}x 
\\
& \qquad \qquad 
+ \left( \beta\sp{q} 
- \left( q(\beta -1)+1 \right) \right)
\int\sb{C\sb{m}} |\nabla v|\sp{q}
|v|\sp{q(\beta -1)} \,\mathrm{d}x 
+ \beta\sp{q} \int\sb{A\sb{m}} |\nabla v|\sp{q}
|v|\sp{q(\beta -1)} \,\mathrm{d}x \\
& \qquad \qquad
+ \int\sb{\mathbb{R}\sp{N}} | \nabla v|\sp{p-2} 
\nabla v \cdot \nabla v\sb{m}\,\mathrm{d}x 
+ \int\sb{\mathbb{R}\sp{N}} A(x) |v|\sp{p-2}v v\sb{m} 
\, \mathrm{d}x \\
& \qquad \qquad +\int\sb{\mathbb{R}\sp{N}} 
| \nabla v |\sp{q-2} \nabla v \cdot \nabla v\sb{m}
\,\mathrm{d}x 
+\int\sb{\mathbb{R}\sp{N}} B(x) |v|\sp{q-2}v v\sb{m} 
\, \mathrm{d}x 
\end{align*}

So, using inequality~\eqref{eq:group} we deduce that
\begin{align*}
{} & 
\int\sb{R\sp{N}} |\nabla w\sb{m}|\sp{p}\,\mathrm{d}x 
+\int\sb{\mathbb{R}\sp{N}} A(x) |w\sb{m}|\sp{p}
\,\mathrm{d}x 
+ \int\sb{R\sp{N}} |\nabla w\sb{m}|\sp{q}\,\mathrm{d}x 
+ \int\sb{\mathbb{R}\sp{N}} B(x) |w\sb{m}|\sp{q}
\,\mathrm{d}x \\
& \qquad
\leqslant \left(\dfrac{\beta\sp{p}}
{q(\beta -1)+1} 
\right) 
\left\{ 
\splitfrac{\displaystyle\int\sb{\mathbb{R}\sp{N}} 
| \nabla v|\sp{p-2} \nabla v \cdot \nabla v\sb{m}
\, \mathrm{d}x 
+ \displaystyle\int\sb{\mathbb{R}\sp{N}} 
A(x) |v|\sp{p-2}v v\sb{m} \, \mathrm{d}x }
{ + \displaystyle\int\sb{\mathbb{R}\sp{N}} 
| \nabla v|\sp{q-2} \nabla v \cdot \nabla v\sb{m}
\,\mathrm{d}x 
+ \displaystyle\int\sb{\mathbb{R}\sp{N}} 
B(x) |v|\sp{q-2}v v\sb{m} \, \mathrm{d}x }
\right\} \\
& \qquad \qquad
+ \beta\sp{p} \int\sb{C\sb{m}} 
|\nabla v|\sp{p} |v|\sp{p(\beta -1)} \,\mathrm{d}x 
+ \beta\sp{q} \int\sb{A\sb{m}} 
|\nabla v|\sp{q} |v|\sp{q(\beta -1)}\,\mathrm{d}x.
\end{align*}

Now we estimate some integrals that appear in the 
previous inequality. 
First, by definition of \( A\sb{m} \) we have
\begin{align*}
\int\sb{A\sb{m}} |\nabla v|\sp{q} |v|\sp{q(\beta -1)}
\,\mathrm{d}x
& =
\int\sb{A\sb{m}} 
\dfrac{|\nabla v|\sp{q-2}}
{[p(\beta -1)+1]|v|\sp{(p-q)(\beta -1)}} \,
\nabla v \cdot \nabla v\sb{m} \,\mathrm{d}x \\
& \leqslant
\int\sb{\mathbb{R}\sp{N}} | \nabla v|\sp{p-2} 
\nabla v \cdot \nabla v\sb{m}\,\mathrm{d}x 
+ \int\sb{\mathbb{R}\sp{N}} A(x) |v|\sp{p-2}v v\sb{m} 
\, \mathrm{d}x \\
& \qquad +
\int\sb{\mathbb{R}\sp{N}} | \nabla v|\sp{q-2} 
\nabla v \cdot \nabla v\sb{m}\,\mathrm{d}x 
+ \int\sb{\mathbb{R}\sp{N}} B(x) |v|\sp{q-2}v v\sb{m} 
\, \mathrm{d}x.
\end{align*}
In a similar way, by definition of 
\( C\sb{m} \) we have
\begin{align*}
\int\sb{C\sb{m}} |\nabla v|\sp{p} |v|\sp{p(\beta -1)}
\,\mathrm{d}x
& \leqslant
\int\sb{\mathbb{R}\sp{N}} | \nabla v|\sp{p-2} 
\nabla v \cdot \nabla v\sb{m}\,\mathrm{d}x 
+ \int\sb{\mathbb{R}\sp{N}} A(x) |v|\sp{p-2}v v\sb{m} 
\, \mathrm{d}x \\
& \qquad +
\int\sb{\mathbb{R}\sp{N}} | \nabla v|\sp{q-2} 
\nabla v \cdot \nabla v\sb{m}\,\mathrm{d}x 
+ \int\sb{\mathbb{R}\sp{N}} B(x) |v|\sp{q-2}v v\sb{m} 
\, \mathrm{d}x.
\end{align*}
Using these inequalities we deduce that
\begin{align*}
{} & 
\int\sb{R\sp{N}} |\nabla w\sb{m}|\sp{p}\,\mathrm{d}x 
+ \int\sb{\mathbb{R}\sp{N}} A(x) |w\sb{m}|\sp{p}
\,\mathrm{d}x 
+ \int\sb{R\sp{N}} |\nabla w\sb{m}|\sp{q}\,\mathrm{d}x 
+\int\sb{\mathbb{R}\sp{N}} B(x) |w\sb{m}|\sp{q}
\,\mathrm{d}x \\
&  \qquad
\leqslant \left( 
\beta\sp{p} + \dfrac{\beta\sp{p}}
{q(\beta -1)+1} 
\right) 
\left\{ 
\splitfrac{\displaystyle\int\sb{\mathbb{R}\sp{N}} 
| \nabla v|\sp{p-2} \nabla v \cdot \nabla v\sb{m}
\,\mathrm{d}x 
+\displaystyle\int\sb{\mathbb{R}\sp{N}} 
A(x) |v|\sp{p-2}v v\sb{m} \, \mathrm{d}x }
{+\displaystyle\int\sb{\mathbb{R}\sp{N}} 
| \nabla v|\sp{q-2} \nabla v \cdot \nabla v\sb{m}
\,\mathrm{d}x 
+\displaystyle\int\sb{\mathbb{R}\sp{N}} 
B(x) |v|\sp{q-2}v v\sb{m} \, \mathrm{d}x }
\right\} \\
&  \qquad
\leqslant 2\beta\sp{p} 
\left\{ \splitfrac{
\displaystyle\int\sb{\mathbb{R}\sp{N}} 
| \nabla v|\sp{p-2} \nabla v \cdot \nabla v\sb{m}
\,\mathrm{d}x 
+\int\sb{\mathbb{R}\sp{N}} A(x) |v|\sp{p-2}v v\sb{m} 
\, \mathrm{d}x }
{+\displaystyle\int\sb{\mathbb{R}\sp{N}} 
| \nabla v|\sp{q-2} \nabla v \cdot \nabla v\sb{m}
\,\mathrm{d}x 
+\displaystyle\int\sb{\mathbb{R}\sp{N}} 
B(x) |v|\sp{q-2}v v\sb{m} \, \mathrm{d}x }
\right\} \\
& \qquad 
= 2\beta\sp{p} 
\int\sb{\mathbb{R}\sp{N}} H(x,v)v\sb{m}\,\mathrm{d}x.
\end{align*}

Using the Sobolev inequality~\eqref{eq:Sob} 
and the hypothesis 
\( H(x,s) \leqslant h(x)|s|\sp{p-1} \), we obtain
\begin{align*}
{} & 
\left(
\int\sb{A\sb{m} \cup C\sb{m}} 
|w\sb{m}|\sp{p\sp{*}} \,\mathrm{d}x 
\right)\sp{p/p\sp{*}}  
\leqslant 
\left(
\int\sb{\mathbb{R}\sp{N}} 
|w\sb{m}|\sp{p\sp{*}} \,\mathrm{d}x 
\right)\sp{p/p\sp{*}} \\
& \qquad \leqslant 
S \int\sb{\mathbb{R}\sp{N}} 
|\nabla w\sb{m}|\sp{p} \,\mathrm{d}x 
\leqslant 
S \left\{ 
\splitfrac{\displaystyle\int\sb{\mathbb{R}\sp{N}} 
|\nabla w\sb{m}|\sp{p} \,\mathrm{d}x 
+ \displaystyle\int\sb{\mathbb{R}\sp{N}} 
a(x)|w\sb{m}|\sp{p} \,\mathrm{d}x }
{ + \displaystyle\int\sb{\mathbb{R}\sp{N}}
|\nabla w\sb{m}|\sp{q} \,\mathrm{d}x 
+ \displaystyle\int\sb{\mathbb{R}\sp{N}} 
b(x)|w\sb{m}|\sp{q} \,\mathrm{d}x }
\right\} \\
& \qquad \leqslant 
2S \beta\sp{p} \int\sb{\mathbb{R}\sp{N}} 
H(x,v) v\sb{m}\,\mathrm{d}x 
\leqslant 
2S \beta\sp{p} \int\sb{\mathbb{R}\sp{N}} h(x)
\left| v \right|\sp{p-1} v\sb{m}\,\mathrm{d}x \\
& \qquad =  
2S \beta\sp{p} 
\left\{ \splitfrac{\displaystyle\int\sb{A\sb{m}} h(x)
\left| v \right|\sp{p-2}v 
\left| v \right|\sp{p(\beta -1)} v \,\mathrm{d}x 
+ \displaystyle\int\sb{B\sb{m}} h(x)
\left| v \right|\sp{p-2}v 
\, m\sp{p} v \,\mathrm{d}x } 
{ + \displaystyle\int\sb{C\sb{m}} h(x)
\left| v \right|\sp{p-2}v 
\left| v \right|\sp{q(\beta -1)} v \,\mathrm{d}x }
\right\} \\
& \qquad  \leqslant 
2S \beta\sp{p} 
\left\{ \displaystyle\int\sb{\mathbb{R}\sp{N}} h(x)
\left| v \right|\sp{p\beta} \,\mathrm{d}x 
 + \displaystyle\int\sb{\mathbb{R}\sp{N}} h(x)
\left| v \right|\sp{p} \,\mathrm{d}x
\right\}, 
\end{align*}
where in the last passage we used the definitions of the functions \(v\sb{m}\) and \(w\sb{m}\), together with the facts that in \(B\sb{m}\) we have \( |w\sb{m}|\sp{p} \leqslant |v|\sp{p\beta}\) and in \(C\sb{m}\) we have \( |v|\sp{p+q(\beta -1)} \leqslant |v|\sp{p}\).

Passing to the limit as \( m \to \infty \) 
and using Lebesgue's dominated convergence theorem, 
it follows that
\begin{align*}
\left( \int\sb{\mathbb{R}\sp{N}} 
|v|\sp{p\sp{*}\beta} \,\mathrm{d}x \right)\sp{p/p\sp{*}}
& \leqslant
2S \beta\sp{p} 
\left\{ \displaystyle\int\sb{\mathbb{R}\sp{N}} h(x)
\left| v \right|\sp{p\beta} \,\mathrm{d}x 
 + \displaystyle\int\sb{\mathbb{R}\sb{N}} h(x)
\left| v \right|\sp{p} \,\mathrm{d}x
\right\}.
\end{align*}

Applying H\"{o}lder's inequality to both terms 
on the right-hand side of the previous inequality, 
we obtain
\begin{align*}
\int\sb{\mathbb{R}\sp{N}} h(x)
\left| v \right|\sp{p\beta} \,\mathrm{d}x  
& \leqslant 
\| h \|\sb{L\sp{r}(\mathbb{R}\sp{N})} 
\| v \|\sb{L\sp{p \beta r'}(\mathbb{R}\sp{N})}
\sp{p \beta} \\
\intertext{and}
\int\sb{\mathbb{R}\sb{N}} h(x)
\left| v \right|\sp{p} \,\mathrm{d}x
& \leqslant
\| h \|\sb{L\sp{r}(\mathbb{R}\sp{N})} 
\| v \|\sb{L\sp{p r'}(\mathbb{R}\sp{N})}\sp{p};
\end{align*}
hence
\begin{align*}
\| v \|\sb{L\sp{p\sp{*} \beta}(\mathbb{R}\sp{N})}
\sp{p \beta}
& \leqslant
2S \| h \|\sb{L\sp{r}(\mathbb{R}\sp{N})} 
\beta\sp{p} \left\{ 
\| v \|\sb{L\sp{p \beta r'}(\mathbb{R}\sp{N})}
\sp{p \beta}
+ 
\| v \|\sb{L\sp{p r'}(\mathbb{R}\sp{N})}\sp{p}
\right\} \\
& \leqslant 
2S \| h \|\sb{L\sp{r}(\mathbb{R}\sp{N})} 
\beta\sp{p} \left\{ 
\max\left\{
\| v \|\sb{L\sp{p \beta r'}
(\mathbb{R}\sp{N})}\sp{p \beta} , 1 \right\}
+ 
\max\left\{\| v \|\sb{L\sp{p r'}
(\mathbb{R}\sp{N})}\sp{p} , 1 \right\}
\right\} \\
& = 
C\sb{1}\sp{p} \beta\sp{p} \max\left\{ 
\| v \|\sb{L\sp{p \beta r'}(\mathbb{R}\sp{N})}
\sp{p \beta},
\max\left\{\| v \|\sb{L\sp{p r'}
(\mathbb{R}\sp{N})}\sp{p} , 1 \right\}
\right\}, 
\end{align*}
where we used the notation
\( C\sb{1}\sp{p} 
= C\sb{1}\sp{p}(N,p,q,r,
\| h \|\sb{L\sp{r}(\mathbb{R}\sp{N})}) 
\equiv 4S \| h \|\sb{L\sp{r}(\mathbb{R}\sp{N})} 
> 0\).

Writing
\( \beta = \sigma\sp{j} \) for \( j \in \mathbb{N} \)
we deduce that 
\begin{align}
\label{eq:iteration}
\| v \|\sb{L\sp{p\sp{*} \sigma\sp{j}}(\mathbb{R}\sp{N})}
& \leqslant
C\sb{1}\sp{1/\sigma\sp{j}} \sigma\sp{j/\sigma\sp{j}} 
\max\left\{ 
\| v \|\sb{L\sp{p \sigma\sp{j} r'}(\mathbb{R}\sp{N})},
\max\left\{\| v \|\sb{L\sp{p r'}
(\mathbb{R}\sp{N})}\sp{1/\sigma\sp{j}} , 1 \right\}
\right\}.
\end{align}
Choosing \( \sigma = p\sp{*}/pr' > 1\),
from inequality~\eqref{eq:iteration} with 
\( j = 1 \) we obtain
\begin{align*}
\| v \|\sb{L\sp{p\sp{*} \sigma}(\mathbb{R}\sp{N})}
& \leqslant
C\sb{1}\sp{1/\sigma} \sigma\sp{1/\sigma} 
\max\left\{ 
\| v \|\sb{L\sp{p\sp{*}}(\mathbb{R}\sp{N})},
\max\left\{\| v \|\sb{L\sp{p r'}
(\mathbb{R}\sp{N})}\sp{1/\sigma} , 1 \right\}
\right\};
\end{align*}
and from inequality~\eqref{eq:iteration} 
with \( j = 2 \)
together with the previous inequality we obtain
\begin{align*}
\| v \|\sb{L\sp{p\sp{*} \sigma\sp{2}}(\mathbb{R}\sp{N})}
& \leqslant
C\sb{1}\sp{1/\sigma\sp{2}} \sigma\sp{2/\sigma\sp{2}} 
\max\left\{ 
\| v \|\sb{L\sp{p\sp{*} \sigma}(\mathbb{R}\sp{N})},
\max\left\{\| v \|\sb{L\sp{p r'}
(\mathbb{R}\sp{N})}\sp{1/\sigma\sp{2}} , 1 \right\}
\right\} \nonumber \\
& \leqslant
C\sb{1}\sp{1/\sigma\sp{2}} \sigma\sp{2/\sigma\sp{2}} 
\max\left\{ \splitfrac{ 
C\sb{1}\sp{1/\sigma} \sigma\sp{1/\sigma} 
\max\left\{ 
\| v \|\sb{L\sp{p\sp{*}}(\mathbb{R}\sp{N})},
\max\left\{\| v \|\sb{L\sp{p r'}
(\mathbb{R}\sp{N})}\sp{1/\sigma} , 1 \right\}
\right\},}
{\max\left\{\| v \|\sb{L\sp{p r'}
(\mathbb{R}\sp{N})}\sp{1/\sigma\sp{2}} , 1 \right\}}
\right\} \nonumber \\
& \leqslant
C\sb{1}\sp{1/\sigma + 1/\sigma\sp{2}} 
\sigma\sp{1/\sigma + 2/\sigma\sp{2}} \nonumber \\
& \qquad \times
\max\left\{ \splitfrac{ 
\| v \|\sb{L\sp{p\sp{*}}(\mathbb{R}\sp{N})},
\max\left\{
\big( C\sb{1}\sp{1/\sigma} 
\sigma\sp{1/\sigma} \big)\sp{-1}, 1 \right\},}
{
\max\left\{
\big( C\sb{1}\sp{1/\sigma} 
\sigma\sp{1/\sigma} \big)\sp{-1}, 1 \right\}
\max\left\{\| v \|\sb{L\sp{p r'}
(\mathbb{R}\sp{N})}\sp{1/\sigma} ,
\| v \|\sb{L\sp{p r'}
(\mathbb{R}\sp{N})}\sp{1/\sigma\sp{2}} , 1 \right\}}
\right\}.
\end{align*}
Proceeding in this way, for \( j \in \mathbb{N} \)
we obtain
\begin{align}
\label{eq:iterationj}
\| v \|\sb{L\sp{p\sp{*} \sigma\sp{j}}(\mathbb{R}\sp{N})} 
& \leqslant
C\sb{1}\sp{s\sb{j}} 
\sigma\sp{t\sb{j}} 
\max\left\{ 
\| v \|\sb{L\sp{p\sp{*}}(\mathbb{R}\sp{N})},
K\sb{j}, K\sb{j}L\sb{j} \right\},
\end{align}
where 
\( s\sb{j} \equiv 1/\sigma + 1/\sigma\sp{2} + \cdots +
1/\sigma\sp{j} \); 
\( t\sb{j} \equiv 1/\sigma + 2/\sigma\sp{2} + \cdots +
j/\sigma\sp{j} \); 
\begin{align*} 
K\sb{j} 
& \equiv 
\begin{cases}
1, & \text{ if $j = 1$}; \\
\max\sb{1 \leqslant i \leqslant j-1}
\left\{ 
C\sb{1}\sp{-s\sb{i}} \sigma\sp{-t\sb{i}}, 1
\right\},
& \text{ if $j \geqslant 2$};
\end{cases} 
\end{align*}
and 
\begin{align*}
L\sb{j} 
& \equiv 
\max\sb{1 \leqslant i \leqslant j}
\left\{  
\| v \|\sb{L\sp{p r'}
(\mathbb{R}\sp{N})}\sp{1/\sigma\sp{i}} , 1 
\right\}.
\end{align*}
Since \( \sigma > 1 \), we have
\( \lim\sb{j \to \infty} s\sb{j} 
= 1/(\sigma -1) \)
and
\( \lim\sb{j \to \infty} t\sb{j} 
= \sigma/(\sigma -1)\sp{2} \);
hence,
\begin{align}
\label{eq:defk}
\lim\sb{j \to \infty} K\sb{j} 
& \equiv K =
\begin{cases}
\big( C\sb{1}\sp{1/(\sigma -1)} 
\sigma\sp{\sigma/(\sigma -1) \sp{2}} \big)\sp{-1} 
& \text{ if $C\sb{1} \leqslant 1$}; \\
\big( C\sb{1}\sp{1/\sigma} 
\sigma\sp{1/\sigma} \big)\sp{-1} 
& \text{ if $C\sb{1} > 1$}; 
\end{cases}
\end{align}
and
\begin{align}
\label{eq:defl}
\lim\sb{j \to \infty} L\sb{j} 
& \equiv L\sb{v} = 
\begin{cases} 
1, & \text{ if $ \| v \|\sb{L\sp{p r'}
(\mathbb{R}\sp{N})} \leqslant 1 $}; \\
\| v \|\sb{L\sp{p r'}(\mathbb{R}\sp{N})}
\sp{1/(\sigma -1)},
& \text{ if $ \| v \|\sb{L\sp{p r'}(\mathbb{R}\sp{N})} 
> 1 $ }.
\end{cases}
\end{align}
Using the fact that 
\( v \in E \subset D\sp{1,p}(\mathbb{R}\sp{N}) 
\cap D\sp{1,q}(\mathbb{R}\sp{N}) \), 
applying H\"{o}lder's inequality we deduce that 
\( L\sb{v} < +\infty \).

Finally, passing to the limit as \( j \to \infty\) 
and using inequality~\eqref{eq:iterationj} we obtain
\begin{align}
\label{eq:estimatelinf}
\left\| v \right\|\sb{L\sp{\infty}(\mathbb{R}\sp{N})}
& = 
\lim\sb{j \to \infty} 
\left\| v \right\|\sb{L\sp{p\sp{*} 
\sigma\sp{j}}(\mathbb{R}\sp{N})} \nonumber \\
& \leqslant 
C\sb{1}\sp{1/(\sigma - 1)} 
\sigma\sp{\sigma/(\sigma -1 )\sp{2}}
\max\left\{ 
\left\| v \right\|\sb{L\sp{p\sp{*}}(\mathbb{R}\sp{N})}, 
K, KL\sb{v}, 1 \right\} \nonumber \\
& \equiv M\sb{1} 
\max\left\{ 
\left\| v \right\|\sb{L\sp{p\sp{*}}(\mathbb{R}\sp{N})}, 
K, KL\sb{v}, 1 \right\},
\end{align}
where 
\( M\sb{1} = M\sb{1}(N,p,q,r, 
\| h \|\sb{L\sp{r}(\mathbb{R}\sp{N})}) \).
This concludes the proof of the lemma.
\end{proof}

\begin{lem}
\label{lem:estiminf}
For every \( R > 1 \) there exist a constant
\( M\sb{2} = M\sb{2}(N, p, q, r, 
a\sb{\infty}, b\sb{\infty}, \theta, c\sb{0}) \) 
such that
any positive ground state solution 
\( u \in 
D\sp{1,p}(\mathbb{R}\sp{N})\cap 
D\sp{1,q}(\mathbb{R}\sp{N})\) to the 
auxiliary problem~\eqref{eq:auxprob} 
verifies the inequality
\begin{align*}
\| u \|\sb{L\sp{\infty}(\mathbb{R}\sp{N})}
& \leqslant
M\sb{2}.
\end{align*}
\end{lem}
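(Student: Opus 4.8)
The plan is to apply Lemma~\ref{lem:asbk} to the positive ground state solution $u$ of the auxiliary problem~\eqref{eq:auxprob}, after recasting the right-hand side of that equation in a linearized form. Since $u>0$ we set
\begin{equation*}
h(x) \equiv g(x,u(x))\,u(x)\sp{1-p} \quad\text{if } u(x) > 0, \qquad h(x) \equiv 0 \quad\text{if } u(x) = 0,
\end{equation*}
and we let $H(x,s) \equiv h(x)|s|\sp{p-2}s$, so that $|H(x,s)| \leqslant h(x)|s|\sp{p-2}s$ holds for every $s>0$ and $u$ is a weak solution of problem~\eqref{prob:h} with $A = a$ and $B = b$ (both nonnegative). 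It then remains to exhibit an exponent $r$ with $pr > N$ for which $h \in L\sp{r}(\mathbb{R}\sp{N})$ with $\|h\|\sb{L\sp{r}(\mathbb{R}\sp{N})}$ bounded by a constant of the announced type, and afterwards to control the quantities $\|u\|\sb{L\sp{p\sp{*}}(\mathbb{R}\sp{N})}$, $K$ and $L\sb{u}$ occurring in the estimate provided by Lemma~\ref{lem:asbk}.

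For the estimate on $h$, recall that $g(x,t) \leqslant f(t)$ for all $t > 0$, so the pair of inequalities~\eqref{eq:conshypf1f2} yields $h(x) \leqslant c\sb{0}\min\{ u(x)\sp{p\sp{*}-p},\, u(x)\sp{\tau-p} \}$. Hence, for $r \in [N/p,\, p\sp{*}/(\tau-p)]$ we get, pointwise, $h\sp{r} \leqslant c\sb{0}\sp{r}u\sp{(p\sp{*}-p)r} \leqslant c\sb{0}\sp{r}u\sp{p\sp{*}}$ on $\{ u \leqslant 1 \}$ (because $(p\sp{*}-p)r \geqslant p\sp{*}$ there) and $h\sp{r} \leqslant c\sb{0}\sp{r}u\sp{(\tau-p)r} \leqslant c\sb{0}\sp{r}u\sp{p\sp{*}}$ on $\{ u > 1 \}$ (because $(\tau-p)r \leqslant p\sp{*}$ there), whence $\|h\|\sb{L\sp{r}(\mathbb{R}\sp{N})} \leqslant c\sb{0}\|u\|\sb{L\sp{p\sp{*}}(\mathbb{R}\sp{N})}\sp{p\sp{*}/r}$. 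Since $p\sp{*}/(\tau-p) > p\sp{*}/(p\sp{*}-p) = N/p$ — here the subcriticality $\tau < p\sp{*}$ from hypothesis~\eqref{hyp:f2} is used — any fixed $r$ in $(N/p,\, p\sp{*}/(\tau-p)]$ satisfies $pr > N$ and does the job. Moreover, Lemma~\ref{lem:estimsol} gives $\|u\|\sb{1,p}\sp{p} + \|u\|\sb{1,q}\sp{q} \leqslant dkp/(p-1)$, with $d$ and $k$ depending only on $a\sb{\infty}$, $b\sb{\infty}$, $\theta$ (and $p$) and, crucially, independent of $R$; so the Sobolev inequality~\eqref{eq:Sob} bounds $\|u\|\sb{L\sp{p\sp{*}}(\mathbb{R}\sp{N})}$ and $\|u\|\sb{L\sp{q\sp{*}}(\mathbb{R}\sp{N})}$, and therefore also $\|h\|\sb{L\sp{r}(\mathbb{R}\sp{N})}$, by constants depending only on $N, p, q, r, a\sb{\infty}, b\sb{\infty}, \theta, c\sb{0}$.

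With these inputs, Lemma~\ref{lem:asbk} gives
\begin{equation*}
\|u\|\sb{L\sp{\infty}(\mathbb{R}\sp{N})} \leqslant M\sb{1}\max\{ \|u\|\sb{L\sp{p\sp{*}}(\mathbb{R}\sp{N})},\, K,\, KL\sb{u},\, 1 \},
\end{equation*}
where $M\sb{1}$ depends only on $N, p, q, r$ and $\|h\|\sb{L\sp{r}(\mathbb{R}\sp{N})}$, where by~\eqref{eq:defk} the constant $K$ depends only on $N, p, q, r$ and $\|h\|\sb{L\sp{r}(\mathbb{R}\sp{N})}$ (through $C\sb{1} = (4S\|h\|\sb{L\sp{r}(\mathbb{R}\sp{N})})\sp{1/p}$ and $\sigma = p\sp{*}/(pr')$), and where by~\eqref{eq:defl} the factor $L\sb{u}$ depends only on $\|u\|\sb{L\sp{pr'}(\mathbb{R}\sp{N})}$. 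Since $r > N/p$ forces $p < pr' < p\sp{*}$, the norm $\|u\|\sb{L\sp{pr'}(\mathbb{R}\sp{N})}$ is finite and bounded by interpolation between the already-controlled norms $\|u\|\sb{L\sp{q\sp{*}}(\mathbb{R}\sp{N})}$ and $\|u\|\sb{L\sp{p\sp{*}}(\mathbb{R}\sp{N})}$ (for which it suffices, for instance, to pick $r$ close enough to $N/p$ that $q\sp{*} \leqslant pr' < p\sp{*}$). Collecting all the estimates, each term on the right-hand side above is bounded by a constant depending only on $N, p, q, r, a\sb{\infty}, b\sb{\infty}, \theta, c\sb{0}$, and this constant is the required $M\sb{2}$.

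The heart of the matter — and the reason the earlier lemmas were formulated the way they were — is the \emph{uniformity} of $M\sb{2}$: it must depend neither on the particular ground state solution $u$ nor on the penalization parameter $R > 1$, because Section~\ref{sec:obtainsol} will later need $R$ to be taken large. Both independences rest on Lemma~\ref{lem:estimsol}, whose bound is $R$-free, together with the observation that the estimate $h \leqslant c\sb{0}\min\{ u\sp{p\sp{*}-p},\, u\sp{\tau-p}\}$ comes purely from the comparison $g \leqslant f$ and never uses the explicit $R$-dependent formula defining $g$, so that $\|h\|\sb{L\sp{r}(\mathbb{R}\sp{N})}$ is controlled only through $\|u\|\sb{L\sp{p\sp{*}}(\mathbb{R}\sp{N})}$. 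The one delicate bookkeeping point is the admissible window $r \in (N/p,\, p\sp{*}/(\tau-p)]$: it must simultaneously ensure $pr > N$, keep $u\sp{\tau-p} \in L\sp{r}(\mathbb{R}\sp{N})$, and keep $\|u\|\sb{L\sp{pr'}(\mathbb{R}\sp{N})}$ under control via the interpolation above; checking that these demands are mutually compatible is the part that requires attention.
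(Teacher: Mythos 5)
Your argument is correct and essentially the paper's own proof: both reduce the estimate to Lemma~\ref{lem:asbk} with a weight $h$ built from $u$ via~\eqref{eq:conshypf1f2} (the paper takes $h(x)=c_0|u(x)|^{\tau-p}$ with $r=p^{*}/(\tau-p)$, repackaging the penalized part of $g$ into the potential $A=(1-1/k)a$, while you keep $A=a$, $B=b$ and simply use $g\leqslant f$ — a cosmetic difference), and both obtain the $R$- and $u$-independent control of $\|h\|_{L^{r}(\mathbb{R}^N)}$ and $\|u\|_{L^{p^{*}}(\mathbb{R}^N)}$ from Lemma~\ref{lem:estimsol} together with the Sobolev inequality~\eqref{eq:Sob}. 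Your extra care with the admissible window $r\in(N/p,\,p^{*}/(\tau-p)]$ and with bounding $L_{u}$ is in fact more explicit than the paper (which merely asserts $L_{v}<\infty$ and silently omits it from the parameters of $M_{2}$), though note that your interpolation device requiring $q^{*}\leqslant pr'<p^{*}$ only functions when $q<p$; in the borderline case $q=p$ it degenerates, a point the paper's own proof likewise leaves untreated.
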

\begin{proof}
Consider \( R > 1 \) and let 
\( u \in 
D\sp{1,p}(\mathbb{R}\sp{N})\cap 
D\sp{1,q}(\mathbb{R}\sp{N}) \) 
be a positive ground state solution to the auxiliary 
problem~\eqref{eq:auxprob}. 
Now we define the function 
\( H \colon \mathbb{R}\sp{N} \times \mathbb{R} 
\to \mathbb{R}\) by
\begin{equation*}
H(x,t) \equiv
\begin{cases}
f(t), & \text{if $|x| \leqslant R$
or if $|x| > R$ and 
$f(t) \leqslant \dfrac{a(x)}{k} \, |t|\sp{p-2}t$}; \\
0,
& \text{if $|x| > R$ and $f(t) > \dfrac{a(x)}{k}\, 
|t|\sp{p-2}t$}.
\end{cases}
\end{equation*}
We also define the functions 
\( A, B \colon \mathbb{R}\sp{N} \to \mathbb{R} \) by
\begin{align*}
A(x) & = 
\begin{cases}
a(x), & \text{if $|x| \leqslant R$
or if $|x| > R$ and 
$f(u(x)) \leqslant \dfrac{a(x)}{k}\,u(x) $}; \\
\left( 1 - \dfrac{1}{k}\right)a(x),
& \text{if $|x| > R$ and $f(u(x)) 
> \dfrac{a(x)}{k}\,u(x)$},
\end{cases}
\end{align*}
and \( B(x) = b(x) \).

Considering these functions and
using \( v \in E \) as a test function, we have
\begin{align*}
0 & = \int\sb{\mathbb{R}\sp{N}} 
|\nabla u|\sp{p-2} \nabla u \cdot \nabla v \,\mathrm{d}x
+ \int\sb{\mathbb{R}\sp{N}} A(x) |u|\sp{p-2}u v 
\,\mathrm{d}x \\
& \qquad + \int\sb{\mathbb{R}\sp{N}} 
|\nabla u|\sp{q-2} \nabla u \cdot \nabla v \,\mathrm{d}x
+ \int\sb{\mathbb{R}\sp{N}} B(x) |u|\sp{q-2}u v 
\,\mathrm{d}x
- \int\sb{\mathbb{R}\sp{N}} H(x,u) v \,\mathrm{d}x \\
& = 
\int\sb{\mathbb{R}\sp{N}} 
|\nabla u|\sp{p-2} \nabla u \cdot \nabla v \,\mathrm{d}x
+ \int\sb{\mathbb{R}\sp{N}} a(x) |u|\sp{p-2}u v 
\,\mathrm{d}x \\
& \qquad + \int\sb{\mathbb{R}\sp{N}} 
|\nabla u|\sp{q-2} \nabla u \cdot \nabla v \,\mathrm{d}x
+ \int\sb{\mathbb{R}\sp{N}} b(x) |u|\sp{q-2}u v 
\,\mathrm{d}x
- \int\sb{\mathbb{R}\sp{N}} g(x,u) v \,\mathrm{d}x.
\end{align*}

From hypothesis~\eqref{hyp:f1}, 
for \( |t| \) small enough we have 
\( | H(x,t)| \leqslant |f(t)| 
\leqslant c\sb{1} |t|\sp{p\sp{*}-1} \); 
from hypothesis~\eqref{hyp:f2}, for \( |t| \) 
big enough we have
\( | H(x,t)| \leqslant |f(t)| 
\leqslant c\sb{2} |t|\sp{\tau - 1} \)  
with \( \tau \in (p, p\sp{*})\). 
Combining both cases we obtain 
\( | H(x,t)| \leqslant |f(t)| 
\leqslant c\sb{0} |t|\sp{p\sp{*}-1} \) 
for every 
\( t \in \mathbb{R}\sp{+} \) 
and for every \( \tau \in (p, p\sp{*}) \).
Then, it follows that 
\( |H(x,u)| \leqslant 
c\sb{0} |u(x)|\sp{\tau - p} |u(x)|\sp{p-1} 
= h(x) |u(x)|\sp{p-1} \), where we define
\( h(x) \equiv c\sb{0} |u(x)|\sp{\tau - p} \).

Direct computations show that 
\( h \in L\sp{r}(\mathbb{R}\sp{N}) \) for 
\( r = p\sp{*}/(\tau - p) \). Indeed,
\begin{align*}
\int\sb{\mathbb{R}\sp{N}} |h(x)|\sp{r}\,\mathrm{d}x 
& \leqslant 
c\sb{0}\sp{p\sp{*}/(\tau - p)}
\int\sb{\mathbb{R}\sp{N}} 
|u|\sp{p\sp{*}}\,\mathrm{d}x \\
& \leqslant
c\sb{0}\sp{p\sp{*}/(\tau - p)}
S\sp{p\sp{*}/p}
\left(
\int\sb{\mathbb{R}\sp{N}} 
|\nabla u|\sp{p}\,\mathrm{d}x
\right)\sp{p\sp{*}/p} \\
& \leqslant
c\sb{0}\sp{p\sp{*}/(\tau - p)}
S\sp{p\sp{*}/p}
\left\{
\splitfrac{
\displaystyle \int\sb{\mathbb{R}\sp{N}}
| \nabla u |\sp{p} \, \mathrm{d}x
+ \int\sb{\mathbb{R}\sp{N}}
a(x) | u |\sp{p} \, \mathrm{d}x }
{+\displaystyle \int\sb{\mathbb{R}\sp{N}}
| \nabla u |\sp{q} \, \mathrm{d}x
+ \int\sb{\mathbb{R}\sp{N}}
b(x) | u |\sp{q} \, \mathrm{d}x }
\right\}\sp{p\sp{*}/p} \\
& \leqslant 
c\sb{0}\sp{p\sp{*}/(\tau - p)}
S\sp{p\sp{*}/p}
\left\{ \left\| u \right\|\sb{1,p}\sp{p}
+ \left\| u \right\|\sb{1,q}\sp{q} 
\right\}\sp{p\sp{*}/p} 
< + \infty.
\end{align*}

In this way, any positive ground state solution 
\( u \in 
D\sp{1,p}(\mathbb{R}\sp{N}) \cap 
D\sp{1,q}(\mathbb{R}\sp{N}) \) 
to the auxiliary problem~\eqref{eq:auxprob} 
verifies the hypothesis of Lemma~\ref{lem:asbk}.
Concluding the argument, from inequality~\eqref{eq:Sob}
and from Lemma~\ref{lem:estimsol} we have 
\begin{align*} 
\left\| u \right\|\sb{L\sp{p\sp{*}}
(\mathbb{R}\sp{N})}
& \leqslant S\sp{1/p} \left\{
 \left\| u \right\|\sb{1,p}\sp{p} 
+ \left\| u \right\|\sb{1,q}\sp{q} 
\right\}\sp{1/p} 
\leqslant 
\left( \dfrac{Sdkp}{p-1} \right)\sp{1/p}.
\end{align*}
Finally, combining 
estimate~\eqref{eq:estimatelinf}
with the previous inequality we obtain
\begin{align*}
\| u \|\sb{L\sp{\infty}(\mathbb{R}\sp{N})} 
& \leqslant
M\sb{1} 
\max\left\{ 
\left\| u \right\|\sb{L\sp{p\sp{*}}(\mathbb{R}\sp{N})}, 
K, KL\sb{u}, 1 \right\} \\
& \leqslant
M\sb{1} 
\max\left\{
\left( \dfrac{Sdkp}{p-1} \right)\sp{1/p},
K, KL\sb{u}, 1 \right\} \\
& \equiv M\sb{2}, 
\end{align*}
where
\( M\sb{2} = M\sb{2}(N, p, q, r, 
a\sb{\infty}, b\sb{\infty}, \theta, c\sb{0}) \).
The lemma is proved.
\end{proof}

\begin{lem}
\label{lem:inequ}
Suppose that \( R\sb{0} \geqslant R > 1 \) and 
let 
\( u \in 
D\sp{1,p}(\mathbb{R}\sp{N}) \cap 
D\sp{1,q}(\mathbb{R}\sp{N}) \) 
be a positive ground state solution to the 
auxiliary problem~\eqref{eq:auxprob}. 
Then \( u \) verifies the inequality
\begin{equation*}
u(x) \leqslant
M\sb{2} \,
\dfrac{R\sp{(N-p)/(p-1)}}
    {|x|\sp{(N-p)/(p-1)}}
\end{equation*}
for every \( |x| \geqslant R > 1 \).
\end{lem}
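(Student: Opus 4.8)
The plan is to compare $u$ with an explicit radial barrier built from the $p$-harmonic profile $|x|^{-(N-p)/(p-1)}$ and invoke the weak comparison principle for the monotone operator $-\Delta_p - \Delta_q$ on bounded exterior annuli. First I would record that $u$ is a subsolution of $-\Delta_p w - \Delta_q w = 0$ in the exterior region $\{|x|>R\}$: testing the weak formulation of~\eqref{eq:auxprob} with an arbitrary nonnegative $\varphi$ supported in $\{|x|>R\}$ and using~\eqref{eq:gxtf} together with $a,b\geqslant 0$ and $u>0$, one gets
\begin{align*}
\int_{\mathbb{R}^N}|\nabla u|^{p-2}\nabla u\cdot\nabla\varphi\,\mathrm{d}x
+\int_{\mathbb{R}^N}|\nabla u|^{q-2}\nabla u\cdot\nabla\varphi\,\mathrm{d}x
=\int_{\mathbb{R}^N}\bigl(g(x,u)-a(x)u^{p-1}-b(x)u^{q-1}\bigr)\varphi\,\mathrm{d}x\leqslant 0,
\end{align*}
since $g(x,u)\leqslant a(x)u^{p-1}/k\leqslant a(x)u^{p-1}$ for $|x|>R$.

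Next I would introduce the barrier
\begin{align*}
\Phi(x)\equiv M_2\,\frac{R^{(N-p)/(p-1)}}{|x|^{(N-p)/(p-1)}},\qquad |x|\geqslant R,
\end{align*}
with $M_2$ as in Lemma~\ref{lem:estiminf}, and verify that it is a supersolution of $-\Delta_p w-\Delta_q w=0$ in $\{|x|>R\}$. Writing $\Phi=C|x|^{\alpha}$ with $\alpha=-(N-p)/(p-1)<0$ and using $\operatorname{div}(|x|^{m}x)=(m+N)|x|^{m}$, a direct radial computation gives $-\Delta_p\Phi=0$ (the exponent $\alpha$ is exactly the one making $\Phi$ $p$-harmonic away from the origin) and $-\Delta_q\Phi=|C\alpha|^{q-1}\,\dfrac{(N-1)(p-q)}{p-1}\,|x|^{m}$ for a suitable exponent $m$; since $2\leqslant q\leqslant p<N$ the prefactor is nonnegative, so $-\Delta_p\Phi-\Delta_q\Phi\geqslant 0$ in $\{|x|>R\}$. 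Note that $\Phi$ is smooth and bounded on every annulus $\{R\leqslant |x|\leqslant\rho\}$, hence an admissible competitor there, and $\Phi=M_2$ on $\{|x|=R\}$.

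The comparison is then carried out on $\Omega_\rho\equiv\{R<|x|<\rho\}$. On $\{|x|=R\}$ we have $u\leqslant M_2=\Phi$ by Lemma~\ref{lem:estiminf}; to control $u$ on the outer sphere I would first establish that $u(x)\to 0$ as $|x|\to\infty$, which follows from $u\in D^{1,p}(\mathbb{R}^N)\subset L^{p^{*}}(\mathbb{R}^N)$, the bound $\|u\|_{L^{\infty}}\leqslant M_2$, and the standard local boundedness estimate for subsolutions of $-\Delta_p w-\Delta_q w\leqslant 0$ on unit balls running off to infinity, on which the $L^{p^{*}}$-mass of $u$ vanishes. Given $\epsilon>0$ choose $\rho=\rho(\epsilon)$ so that $u\leqslant\epsilon$ on $\{|x|\geqslant\rho\}$; then $(u-\Phi-\epsilon)^{+}$ vanishes near $\partial\Omega_\rho$, hence lies in $W^{1,p}_0(\Omega_\rho)\cap W^{1,q}_0(\Omega_\rho)$. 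Since $\Phi+\epsilon$ is still a supersolution while $u$ is a subsolution, testing the two weak (in)equalities with $(u-\Phi-\epsilon)^{+}$, subtracting, and invoking the strict monotonicity of $\xi\mapsto|\xi|^{p-2}\xi$ and of $\xi\mapsto|\xi|^{q-2}\xi$ forces $(u-\Phi-\epsilon)^{+}\equiv 0$ in $\Omega_\rho$, i.e.\ $u\leqslant\Phi+\epsilon$ there. Letting $\rho\to\infty$ and then $\epsilon\to 0$ gives $u(x)\leqslant\Phi(x)$ for every $|x|>R$, and the case $|x|=R$ is immediate from Lemma~\ref{lem:estiminf}; this is precisely the asserted inequality.

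The main obstacle I anticipate is the verification that $\Phi$ is a supersolution for the combined operator: the $q$-Laplacian contribution does not vanish, and one must carry out the radial computation carefully and check that the sign of its divergence is favorable, which happens exactly because $q\leqslant p<N$. A secondary technical point is the justification of the decay $u(x)\to 0$ at infinity, needed to close the comparison on the unbounded region; this rests on interior estimates for subsolutions of $(p,q)$-type equations, which can be quoted from the literature.
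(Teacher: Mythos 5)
Your proposal is correct in outline and rests on the same core idea as the paper's proof: compare \(u\) with the radial barrier \(M_2\,R^{(N-p)/(p-1)}|x|^{-(N-p)/(p-1)}\), which dominates \(u\) on the inner sphere because of the bound \(\|u\|_{L^{\infty}(\mathbb{R}^N)}\leqslant M_2\) from Lemma~\ref{lem:estiminf}, and use inequality~\eqref{eq:gxtf} to obtain a favorable sign for \(u\) outside the ball. Where you genuinely diverge is in how the comparison is closed on the unbounded region. The paper never truncates: it takes \(w=(u-v)^{+}\), extended by zero inside the ball, directly as a test function in the weak formulation of~\eqref{eq:auxprob}, keeps the potential terms (which produce the nonpositive factor \(\left(\tfrac{1}{k}-1\right)\) in~\eqref{eq:graduw}), and concludes \(\nabla w\equiv 0\) from Simon's inequality~\eqref{eq:gte}; no pointwise information about \(u\) at infinity is needed. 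You instead discard the potential terms at once, localize to annuli \(R<|x|<\rho\), and therefore must know that \(u(x)\to 0\) as \(|x|\to\infty\) to control the outer boundary. That decay is the one real debt in your argument: it does not follow merely from \(u\in L^{p^{*}}(\mathbb{R}^N)\cap L^{\infty}(\mathbb{R}^N)\), but requires a uniform local sup-estimate (Moser iteration on far balls) for subsolutions of the \((p,q)\)-operator, where the lack of homogeneity is harmless only because \(u\leqslant M_2\) is already known; alternatively you can avoid it entirely by testing globally with \((u-\Phi)^{+}\) as the paper does, at the price of checking that this is an admissible test function. On two points your write-up is actually more careful than the paper's: you verify that the barrier is only a \(q\)-\emph{supersolution}, with the explicit nonnegative factor \((N-1)(p-q)/(p-1)\) (the paper asserts that the \(q\)-term integrates to zero, which is only correct as an inequality \(\geqslant 0\) against nonnegative test functions), and you run the comparison from radius \(R\), which is what the statement asserts, whereas the paper's proof carries out the comparison from \(R_0\).
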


\begin{proof}
Given \( R\sb{0} \geqslant R > 1 \), 
we define the function 
\( v \colon 
\mathbb{R}\sp{N}\backslash\{ 0 \} \to \mathbb{R} \)
by
\begin{align*} 
v(x) & \equiv M\sb{2} \, 
\dfrac{R\sb{0}\sp{(N-p)/(p-1)}}{|x|\sp{(N-p)/(p-1)}}.
\end{align*}
By hypothesis,
\( u \in 
D\sp{1,p}(\mathbb{R}\sp{N}) \cap 
D\sp{1,q}(\mathbb{R}\sp{N}) \) 
is a positive ground state solution to the 
auxiliary problem~\eqref{eq:auxprob}; therefore, 
we can apply Lemma~\ref{lem:estiminf} to deduce that
\( \| u \|\sb{L\sp{\infty}(\mathbb{R}\sp{N})}
\leqslant M\sb{2} \).
This implies that 
if \( |x| = R\sb{0} \), then 
\( \| u \|\sb{L\sp{\infty}(\mathbb{R}\sp{N})}
\leqslant v(x)\).
Now we define the function
\( w \colon \mathbb{R}\sp{N}\backslash\{ 0 \} \to 
\mathbb{R} \) by
\begin{align*}
w(x) = 
\begin{cases}
0,           & \text{if $|x| \leqslant R\sb{0}$}; \\
(u-v)\sp{+}, & \text{if $|x| \geqslant R\sb{0}$}. 
\end{cases}
\end{align*}
In this way, 
\( w \in D\sp{1,p}(\mathbb{R}\sp{N}) 
 \cap D\sp{1,q}(\mathbb{R}\sp{N})
\); moreover, \( w \in E \) because \( u, v \in E \).

To complete the proof of the lemma we will show that 
\( ( u-v )\sp{+} = 0 \) for \( |x| \geqslant R\sb{0} \).
To accomplish this goal we use the hypotheses on
the potential functions \( a \) and \( b \); 
we will also use the function \( w \in E \) 
as a test function to obtain
\begin{align}
\label{eq:graduw}
{} & 
\int\sb{\mathbb{R}\sp{N}} 
|\nabla u|\sp{p-2} \nabla u \cdot \nabla w \,\mathrm{d}x
+ \int\sb{\mathbb{R}\sp{N}} 
|\nabla u|\sp{q-2} \nabla u \cdot \nabla w 
\,\mathrm{d}x \nonumber  \\
& \qquad 
=  \int\sb{\mathbb{R}\sp{N}} g(x,u) w \,\mathrm{d}x
- \int\sb{\mathbb{R}\sp{N}} a(x) |u|\sp{p-2}u w 
\,\mathrm{d}x 
- \int\sb{\mathbb{R}\sp{N}} b(x) |u|\sp{q-2}u w 
\,\mathrm{d}x \nonumber \\
& \qquad 
=  \int\sb{\mathbb{R}\sp{N}\backslash B\sb{R\sb{0}}(0) 
\wedge f(t) \leqslant a(x)|t|\sp{p-2}t/k} 
f(u) w \,\mathrm{d}x
+ \int\sb{\mathbb{R}\sp{N}\backslash B\sb{R\sb{0}}(0)
\wedge f(t) > a(x)|t|\sp{p-2}t/k}
\dfrac{a(x)}{k}\, |u|\sp{p-2} u w 
\,\mathrm{d}x \nonumber \\
& \qquad \qquad 
- \int\sb{\mathbb{R}\sp{N}\backslash B\sb{R\sb{0}}(0)} 
a(x) |u|\sp{p-2}u w \,\mathrm{d}x 
- \int\sb{\mathbb{R}\sp{N}\backslash B\sb{R\sb{0}}(0)} 
b(x) |u|\sp{q-2}u w \,\mathrm{d}x \nonumber \\
& \qquad 
\leqslant 
\int\sb{\mathbb{R}\sp{N}\backslash B\sb{R\sb{0}}(0) 
\wedge f(t) \leqslant a(x)|t|\sp{p-2}t/k} 
\dfrac{a(x)}{k}\,|u|\sp{p-2} u w \,\mathrm{d}x 
\nonumber \\
& \qquad \qquad 
+ \int\sb{\mathbb{R}\sp{N}\backslash B\sb{R\sb{0}}(0) 
\wedge f(t) > a(x)|t|\sp{p-2}t/k} \dfrac{a(x)}{k}\, 
|u|\sp{p-2} u w \,\mathrm{d}x \nonumber \\
& \qquad \qquad 
- \int\sb{\mathbb{R}\sp{N}\backslash B\sb{R\sb{0}}(0)} 
a(x) |u|\sp{p-2}u w \,\mathrm{d}x 
- \int\sb{\mathbb{R}\sp{N}\backslash B\sb{R\sb{0}}(0)} 
b(x) |u|\sp{q-2}u w \,\mathrm{d}x \nonumber \\
& \qquad 
= \left( \dfrac{1}{k} -1 \right) 
\int\sb{\mathbb{R}\sp{N}\backslash B\sb{R\sb{0}}(0)} 
a(x)|u|\sp{p-2} u w \,\mathrm{d}x
- \int\sb{\mathbb{R}\sp{N}\backslash B\sb{R\sb{0}}(0)} 
b(x) |u|\sp{q-2}u w \,\mathrm{d}x \nonumber \\
& \qquad \leqslant 0
\end{align}
because \( u \) is a positive function and 
\( w \) is a nonnegative fuction, while 
\( k 
> 1 \).

Using the radially symmetric form of the operator 
\( \Delta\sb{m} u \), we have
\begin{align*}
\int\sb{\mathbb{R}\sp{N}\backslash B\sb{R\sb{0}}(0)} 
|\nabla v|\sp{m-2} \nabla v \cdot \nabla \phi 
\,\mathrm{d}x
& = 0 
\end{align*}
for \( m \in \{ p, q \} \) 
and for every function \( \phi \in E \). 
Therefore,
\begin{align}
\label{eq:gradvw}
{} & \int\sb{\mathbb{R}\sp{N}} 
|\nabla v|\sp{p-2} \nabla v \cdot \nabla w \,\mathrm{d}x
+ \int\sb{\mathbb{R}\sp{N}} 
|\nabla v|\sp{q-2} \nabla v \cdot \nabla w 
\,\mathrm{d}x \nonumber \\
& \qquad  = 
\int\sb{\mathbb{R}\sp{N}\backslash B\sb{R\sb{0}}(0)} 
|\nabla v|\sp{p-2} \nabla v \cdot \nabla w \,\mathrm{d}x
+ \int\sb{\mathbb{R}\sp{N}\backslash B\sb{R\sb{0}}(0)} 
|\nabla v|\sp{q-2} \nabla v \cdot \nabla w 
\,\mathrm{d}x \nonumber \\
& \qquad = 0.
\end{align}

Defining the subsets
\begin{align*}
\widetilde{A} & \equiv \{ x \in \mathbb{R}\sp{N} 
\colon |x| \geqslant R\sb{0} 
\text{ and } u(x) > v(x) \} \\
\intertext{and}
\widetilde{B} & \equiv \{ x \in \mathbb{R}\sp{N} 
\colon |x| < R\sb{0} 
\text{ or } u(x) \leqslant v(x) \},
\end{align*}
we have \( w(x) = u(x) - v(x) \) 
for \( x \in \tilde{A} \) 
and 
\( w(x) = 0 \) for \( x \in \tilde{B} \). 
Using inequality~\eqref{eq:graduw} and 
equation~\eqref{eq:gradvw} we get
\begin{align}
\label{eq:intgrada}
0 & \geqslant 
\int\sb{\mathbb{R}\sp{N}} 
|\nabla u|\sp{p-2} \nabla u \cdot \nabla w \,\mathrm{d}x
+ \int\sb{\mathbb{R}\sp{N}} 
|\nabla u|\sp{q-2} \nabla u \cdot \nabla w 
\,\mathrm{d}x \nonumber  \\
& \qquad 
- \int\sb{\mathbb{R}\sp{N}} 
|\nabla v|\sp{p-2} \nabla v \cdot \nabla w \,\mathrm{d}x
- \int\sb{\mathbb{R}\sp{N}} 
|\nabla v|\sp{q-2} \nabla v \cdot \nabla w 
\,\mathrm{d}x \nonumber \\
& = 
\int\sb{\widetilde{A}} 
\left[
|\nabla u|\sp{p-2} \nabla u 
- |\nabla v|\sp{p-2} \nabla v \right] 
\cdot (\nabla u - \nabla v )\, \mathrm{d}x \nonumber \\
& \qquad
+ \int\sb{\widetilde{A}} 
\left[
|\nabla u|\sp{q-2} \nabla u 
- |\nabla v|\sp{q-2} \nabla v \right] 
\cdot (\nabla u - \nabla v )\, \mathrm{d}x.
\end{align}

Denoting by 
\( \langle\cdot \, ,\cdot\rangle 
\colon \mathbb{R}\sp{N} \times \mathbb{R}\sp{N} 
\to \mathbb{R} \) 
the standard scalar product, given 
\( p \geqslant 2 \) there exists a positive constant 
\( c\sb{p} \in \mathbb{R}\sp{+} \) 
such that 
for every \( x,y \in \mathbb{R}\sp{N} \)
it is valid the inequality
\begin{align}
\label{eq:gte}
\langle |x|\sp{p-2}x - |y|\sp{p-2}y, x-y \rangle
\geqslant
c\sb{p} \, \| x-y\|\sp{p}
\end{align}
For the proof, we refer the reader to 
Simon~\cite{bib:simon}.
From inequalities~\eqref{eq:intgrada} 
and~\eqref{eq:gte} it follows that
\begin{align*}
\int\sb{\mathbb{R}\sp{N}} 
|\nabla w|\sp{p} \,\mathrm{d}x
+ \int\sb{\mathbb{R}\sp{N}} 
|\nabla w|\sp{q} \,\mathrm{d}x
& = 
\int\sb{\widetilde{A}} 
|\nabla u - \nabla v|\sp{p} \,\mathrm{d}x
\int\sb{\widetilde{A}} 
|\nabla u - \nabla v|\sp{q} \,\mathrm{d}x \\
& \leqslant
c\sb{p}\sp{-1} 
\int\sb{\widetilde{A}} 
\left[
|\nabla u|\sp{p-2}\nabla u - 
|\nabla v|\sp{p-2}\nabla v \right]
\cdot(\nabla u - \nabla v) \,\mathrm{d}x \\
& \qquad +
c\sb{q}\sp{-1} 
\int\sb{\widetilde{A}} 
\left[
|\nabla u|\sp{q-2}\nabla u 
- |\nabla v|\sp{q-2}\nabla v \right]
\cdot(\nabla u - \nabla v) \,\mathrm{d}x \\
& \leqslant 0.
\end{align*}

From this inequality we deduce
that each term on the left-hand side of the previous 
inequality must be zero, that is, 
\( w \) is constant in \( \mathbb{R}\sp{N} \). 
But we already know that \( w(x) = 0 \) in  the ball 
\( B\sb{R\sb{0}}(0) \); therefore, \( w(x) = 0 \) 
for every \( x \in \mathbb{R}\sp{N} \). 
This implies that 
\( (u-v)\sp{+} = 0 \) for \( |x| \geqslant R\sb{0} \) 
and \( u(x) \leqslant v(x) \) for every 
\( x \in \mathbb{R}\sp{N}\).
The proof of the lemma is complete.
\end{proof}

\section{Obtaining the solution of the original problem}
\label{sec:obtainsol}
In this section we finally show that the solution 
to the auxiliary problem~\eqref{eq:auxprob} obtained 
in section~\ref{sec:auxprob} is in fact a solution 
to problem~\eqref{eq:prob}. 

\begin{proof}[Proof of Theorem~\ref{thm:main}]
From Lemmas~\ref{lem:psbounded} and~\ref{lem:funcjps}, 
the auxiliary problem~\eqref{eq:auxprob} has a 
positive ground state solution 
\( u \in D\sp{1,p}(\mathbb{R}\sp{N}) 
\cap D\sp{1,q}(\mathbb{R}\sp{N}) \).
To accomplish our goal we need to show that for every 
\( x \in B\sb{R}\sp{c}(0) \) the function \( u \) 
verifies the inequality
\begin{equation*}
f(u) \leqslant
\dfrac{a(x)}{k}\, |u|\sp{p-2}u.
\end{equation*}

From Lemma~\ref{lem:inequ} and by the first inequality 
in~\eqref{eq:conshypf1f2}, 
if \( |x|\geqslant R \), then
\begin{align*}
\dfrac{f(u)}{|u|\sp{p-2}u}
& \leqslant c\sb{0} \, 
\dfrac{|u|\sp{p\sp{*}-2}}{|u|\sp{p-2}} 
\leqslant c\sb{0}
\Bigg\{
M\sb{2}\, 
\dfrac{\big( R\sp{p/(p-1)}\big)\sp{(N-p)/p} 
}{\big( |x|\sp{p/(p-1)} \big)\sp{(N-p)/p}}
\Bigg\}\sp{p\sp{*}-p} 
= c\sb{0}  \,
M\sb{2}\sp{p\sp{*}-p} \,
\dfrac{R\sp{p\sp{2}/(p-1)}}{|x|\sp{p\sp{2}/(p-1)}}.
\end{align*}

Now we define the constant
\begin{equation*}
\Lambda\sp{*} \equiv 
c\sb{0} k
M\sb{2}\sp{p\sp{*}-p}.
\end{equation*}
Considering
\( \Lambda \geqslant \Lambda\sp{*} \),
it follows from the hypothesis~\eqref{hyp:v3} that
\begin{align*}
\dfrac{f(u)}{|u|\sp{p-2}u} 
& \leqslant
\dfrac{\Lambda\sp{*}}{k}\,
\dfrac{R\sp{p\sp{2}/(p-1)}}{|x|\sp{p\sp{2}/(p-1)}} 
\leqslant
\dfrac{\Lambda}{k}\,
\dfrac{R\sp{p\sp{2}/(p-1)}}{|x|\sp{p\sp{2}/(p-1)}} 
\leqslant
\dfrac{a(x)}{k}.
\end{align*}
The proof of the theorem is complete.
\end{proof}

\end{document}